\numberwithin{equation}{section}
\newtheorem{theorem}{Theorem}[section]
\newtheorem{lemma}{Lemma}[section]
\newtheorem{corollary}{Corollary}[section]
\newcommand{\beq}{\begin{eqnarray}}
\newcommand{\eeq}{\end{eqnarray}}
\newcommand{\beqno}{\begin{eqnarray*}}
\newcommand{\eeqno}{\end{eqnarray*}}
\newenvironment {Proof of Theorem} {\noindent {\bf Proof of Theorem 2.1}}{\quad $\square$\par\vspace{3mm}}
\begin{document}
\newcommand{\D}{\displaystyle}



\title[Decay rate of liquid crystal flows]{Time decay rate of global strong solutions to nematic liquid crystal flows in $\mathbb R^3_+$\emph{}}


\author{Jinrui Huang}
\author{Changyou Wang}
\author{Huanyao Wen}

\address{School of Mathematics and Computational Science, Wuyi University, Jiangmen 529020, China}
\address{Department of Mathematics, Purdue University, West Lafayette IN 47907, USA}
\address{School of Mathematics, South China University of Technology, Guangzhou 510641, China}

\begin{abstract}
In this paper, we obtain optimal
time-decay rates in $L^r(\mathbb R^3_+)$ for $r\ge 1$ of global strong solutions to the 
nematic liquid crystal flows in $\mathbb R^3_+$, provided the initial 
data has small $L^3(\mathbb R^3_+)$-norm.
\end{abstract}

\keywords
{Global strong solution, time-decay rate,
nematic liquid crystal flow}
\subjclass [2010] {76A15, 76N10, 74H40,
35Q30}
\maketitle
\tableofcontents

\setcounter{section}{0} \setcounter{equation}{0}
\section{Introduction and statement of main results}

In this paper, we study a simplified nematic liquid crystal flow in the upper half three space
$\mathbb{R}^3_+=\big\{x=(x_1,x_2,x_3)\in \mathbb{R}^3:x_3>0\big\}$:
\begin{eqnarray} \label{system}
\begin{cases}
     u_t+u\cdot \nabla u+\nabla p=\mu\Delta{u}-\lambda\nabla\cdot(\nabla d\odot\nabla d),\\
     \nabla\cdot u=0,\\
     d_t+u\cdot\nabla d=\theta(\Delta{d}+|\nabla d|^2d),
\end{cases}
\end{eqnarray} where $u:\mathbb{R}^3\mapsto\mathbb{R}^3$ denotes the fluid velocity field,
$d:\mathbb{R}^3\mapsto\mathbb{S}^2\equiv\{y\in\mathbb R^3: |y|=1\big\}$ denotes
the macroscopic orientation field of liquid
crystal molecules, $p$ denotes the pressure function,
$\nabla d\odot\nabla d=(\langle\nabla_i d,\nabla_j d\rangle)_{1\le i, j\le 3}$,
and $\mu,\lambda, \theta>0$ represent the fluid viscosity, the competition between kinetic energy and potential energy,
and the microscopic elastic relaxation time for the molecular orientation field repsectively.
 The system (\ref{system}) is equipped with the following initial and boundary conditions:
\begin{eqnarray} \label{i-b condition}
\begin{cases}
     u=\frac{\partial d}{\partial x_3}=0,\ {\rm on}\ \partial\mathbb R^3_+\times (0,\infty),\\
     (u, d)\rightarrow (0,e_3), \ {\rm{as}}\ |x|\rightarrow \infty,\\
     (u, d)\big|_{t=0} =(u_0, d_0),\ {\rm in}\ \mathbb{R}^3_+,
\end{cases}
\end{eqnarray}
where $e_3=(0,0,1)\in \mathbb{S}^2$.

The system (\ref{system}) couples the forced Navier-Stokes equation with the transported flow of harmonic maps
to $\mathbb S^2$, which has attracted considerable interests recently. The rigorous mathematical analysis
of (\ref{system}) was first made by Lin-Liu \cite{Lin-2, Lin-3}, in which they considered
the Ginzburg-Landau approximation of \eqref{system}
by replacing  $|\nabla d|^2d$ by $\frac{1}{\epsilon^2}(1-|d|^2)d$ ($\epsilon>0$) and proved
the existence of global weak solutions and their partial regularities.
For the original system (\ref{system}), Lin-Lin-Wang \cite{Lin-1}  have established the existence of
a global weak solution that is smooth away from at most finitely many time in dimension two (see also \cite{Hong-1},
Hong-Xin \cite{Hong-2}, Huang-Lin-Wang \cite{huang}, Li-Lei-Zhang\cite{Lei}, Wang-Wang \cite{Wang-Wang}
for relevant results in dimension two).
In dimension three, while the existence of global weak solutions of (\ref{system}) remains an open problem,
there has been some interesting progress.
For example,  Ding-Wen \cite{Wen-Ding} have obtained the local existence and uniqueness of strong solutions in dimension three, Huang-Wang \cite{huang-wang} have provided a blow-up criterion of strong solutions, and the
well-posedness of \eqref{system} for an initial data $(u_0,d_0)$ with small $BMO^{-1}\times BMO$-norm
and with small $L^3_{uloc}(\mathbb R^3)$-norm has been shown by Wang \cite{Wang}
and Hineman-Wang \cite{Hineman} respectively. Most recently, Lin-Wang \cite{Lin-Wang16}
have shown the existence of global weak solutions in dimension three under the assumption
that the initial director field $d_0(\Omega)\subset\mathbb S^2_+$.
Concerning the long time asymptotical behavior of global strong solutions to \eqref{system} in $\mathbb R^3$,
Liu-Xu \cite{Liu} have established an optimal decay rate for $\|(u,\nabla d)\|_{H^m(\mathbb R^3)}$ under the
assumption that $(u_0,d_0)\in H^m(\mathbb{R}^3)\times H^{m+1}(\mathbb{R}^3,\mathbb{S}^2)$ ($m\geq3$)
has sufficiently small $\|(u_0,\nabla d_0)\|_{L^2(\mathbb R^3)}$-norm; while
Dai, and her coauthors, has obtained in  \cite{Dai,Dai-2} optimal decay rates in $H^m(\mathbb{R}^3)$
provided $\|u_0\|_{H^1(\mathbb R^3)}+\|d-e_3\|_{H^2(\mathbb R^3)}$ is sufficiently small.

A natural question is to ask for the large time asymptotical behavior of global solutions of \eqref{system}
on general domains. As a first step, 
we consider in this paper time decay rates in $L^p(\mathbb R^3_+)$ of
strong solutions of (\ref{system})-(\ref{i-b condition}) on the upper half space $\mathbb R^3_+$. This consideration
is also partly motivated by previous works  on the corresponding Navier-Stokes equations on $\mathbb R^3_+$,
which has been relatively well understood. For example,
the long time behavior of weak and strong solutions of (\ref{system}) in $L^p(\mathbb{R}^n_+)$
has been investigated by
Bae-Choe \cite{Bae}, Borchers-Miyakawa \cite{Bocher}, Fujigaki-Miyakawa \cite{Fujigaki}, Kozono \cite{Kozono}
in $p\in (1,+\infty)$, and by Han \cite{Han1,Han2,Han3} for the end point case $p=1$, which imposes
difficulties due to the unboundedness of the Leray projection operator $\mathbb{P}: L^1(\mathbb R^n_+)\rightarrow L^1_\sigma(\mathbb R^n_+)$. For the nematic liquid crystal flow \eqref{system}, the  super-critical nonlinearity $\nabla\cdot(\nabla d\odot\nabla d)$ in the momentum equation \eqref{system}$_1$ introduces new difficulties in establishing time decay estimates for solutions to \eqref{system} in $\mathbb{R}^3_+$. In particular,
\begin{itemize}
\item While the scaling of $\nabla d$ is comparable with $u$, 
the required estimates on $\nabla\cdot(\nabla d\odot\nabla d)$ is more delicate than the convective term $u\cdot\nabla u$,
because $\nabla d$ is not divergence free. In fact, third order derivatives of $d$ emerge in the estimate of
$\|\mathbb{P}\left(u\cdot\nabla u+\nabla\cdot(\nabla d\odot\nabla d)\right)\|_{L^1(\mathbb R^3_+)}$, which is equivalent to
the estimate of $\|\nabla d(t)\|_{H^1(\mathbb R^3_+)}^2+\||\nabla d(t)||\nabla^3d(t)|\|_{L^1(\mathbb R^3_+)}$. 
Therefore, higher order estimates of global solutions $(u,d)$ are needed.
To achieve this, we utilize an iteration argument to derive the basic $L^2$-decay estimate by first 
establishing $\|\nabla d(t)\|_{L^2(\mathbb R^3_+)}\lesssim t^{-1}$ through a continuity argument, and then improving 
it to $t^{-\frac54+\epsilon}$ ($\epsilon>0$), and finally to $t^{-\frac54}$. 
\end{itemize}

We would also like to point out that
\begin{itemize}
\item[(i)] in contrast with \cite{Dai,Dai-2,Liu} where they considered \eqref{system} on $\mathbb R^3$,
here we consider \eqref{system} on $\mathbb R^3_+$ and hence we have to analyze  the boundary contributions 
of global solutions, and 
\item[(ii)] the time decay estimate in $L^p(\mathbb R^3_+)$  in this paper holds  for any initial data $(u_0, d_0)\in 
L^3_\sigma(\mathbb R^3_+)\times 
\dot{W}^{1,3}(\mathbb R^3_+,\mathbb S^2)$ that has small  $\|(u_0, \nabla d_0)\|_{L^3(\mathbb R^3_+)}$ norm,  
which improves the conditions on the initial data given by \cite{Dai,Dai-2,Liu}.
\end{itemize}

In order to state the main results, we first recall some notations.
Denote by $C^\infty_{0,\sigma}(\mathbb R^3_+, \mathbb R^3)$ the space of smooth divergence-free vector fields with compact supports in $\mathbb{R}^3_+$,
and $L^r_\sigma(\mathbb R^3_+,\mathbb R^3)$, $r\in [1,\infty)$, the $L^r$-closure of $C^\infty_{0,\sigma}(\mathbb R^3_+, \mathbb R^3)$ in $L^r(\mathbb R^3_+,\mathbb R^3)$.
For any nonnegative integer $k$ and $r\in [1,\infty)$, denote by $W^{k,r}(\mathbb{R}^3_+)$ the $(k,r)$-Sobolev space in $\mathbb{R}^3_+$,
and $W^{k,r}_0(\mathbb R^3_+)$ the $W^{k,r}$-closure of the set $C^\infty_0(\mathbb R^3_+)$,
and
$$W^{k,r}(\mathbb R^3_+, \mathbb S^2)
=\Big\{ v\in W^{k,r}(\mathbb R^3_+, \mathbb R^3):
\ v(x)\in \mathbb S^2\ {\rm{for\ a.e.}}\ x\in \mathbb R^3_+\Big\}.
$$
Set
$$D^{k,r}(\mathbb{R}^3_+)=\left\{v\in L^1_{\rm{loc}}(\mathbb{R}^3_+):
\ \|\nabla^kv\|_{L^r(\mathbb R^3_+)}<\infty\right\}, $$
and $D^k(\mathbb R^3_+)=D^{k,2}(\mathbb R^3_+)$.

Our first theorem concerns the existence of a unique global strong
solution of \eqref{system} and its time-decay rate. More precisely, we have

\begin{theorem} \label{th:E_0} There exists an $\varepsilon_0>0$
such that  if
$u_0\in L^r_{\sigma}(\mathbb R^3_+,\mathbb R^3)$ for $r=2, 3$ and
$d_0\in D^{1}(\mathbb R^3_+,\mathbb S^2)$ satisfies
$$\|u_0\|_{L^3(\mathbb R^3_+)}+\|\nabla d_0\|_{L^3(\mathbb R^3_+)}
\le\varepsilon_0,$$
then the system (\ref{system})-(\ref{i-b
condition}) admits a unique global strong solution
$(u,d)$ such that for
any  $\tau>0$,  the following properties hold:
\begin{eqnarray}
\begin{cases}
     u\in C([0,\infty), L^2(\mathbb R^3_+)\cap L^2(
     [0,\infty), D^1(\mathbb R^3_+)),\\
     u\in C(\mathbb R_+, D^2(\mathbb R^3_+))\cap
     L^2([\tau,+\infty), D^3(\mathbb R^3_+)),\\
     u_t\in C(\mathbb R_+, L^2(\mathbb R^3_+))\cap
     L^2([\tau,+\infty), W^{1,2}(\mathbb R^3_+)),\\
     u\in L^\infty([0,\infty), L^3(\mathbb R^3_+)),\, \nabla d\in L^\infty([0,\infty),L^3(\mathbb R^3_+)),\\
     \big(|u|^{\frac32},  |\nabla d|^{\frac32}\big)
     \in L^2([0,\infty), D^1(\mathbb R^3_+)),\\
     \nabla d\in C([0,\infty), L^2(\mathbb R^3_+))\cap
     L^2([0,\infty), D^1(\mathbb R^3_+)),\\
     \nabla d\in C(\mathbb R_+, D^2(\mathbb R^3_+))
     \cap L^2([\tau,+\infty), D^3(\mathbb R^3_+)), \\
     d_t\in L^2([0,\infty), L^2(\mathbb R^3_+)),\\
     d_t\in C(\mathbb R_+, W^{1,2}(\mathbb R^3_+))\cap L^2([\tau,+\infty), D^2(\mathbb R^3_+)).
\end{cases}
\end{eqnarray}
If, in addition,  $u_0,d_0-e_3\in L^1(\mathbb R^3_+)$, then we have the following
decay estimates:
\begin{eqnarray}
\begin{cases}
     \|u(\cdot,t)\|_{L^{r}(\mathbb R^3_+)}
     +\|(d-e_3)(\cdot,t)\|_{L^r(\mathbb R^3_+)}\leq Ct^{-\frac32\left(1-\frac{1}{r}\right)},\\
     \|\nabla d(\cdot,t)\|_{L^s(\mathbb R^3_+)}\leq Ct^{-\frac12-\frac32\left(1-\frac{1}{s}\right)},\\
     \|\nabla u(\cdot,t)\|_{L^p(\mathbb R^3_+)}\leq Ct^{-\frac12-\frac32\left(1-\frac{1}{p}\right)},\\
     \|\nabla^2d(\cdot,t)\|_{L^{q}(\mathbb R^3_+)}\leq Ct^{-1-\frac32\left(1-\frac{1}{q}\right)},
\end{cases}
\end{eqnarray} for any $t>0$, $r\in (1,\infty]$, $s\in [1,\infty]$, $p\in (1,6]$, and $q\in [1,6]$.
\end{theorem}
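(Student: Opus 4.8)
The plan is to treat the velocity equation through the Stokes semigroup $e^{-\mu tA}$, where $A=-\mathbb P\Delta$ is the Stokes operator associated with the no-slip condition $u|_{\partial\mathbb R^3_+}=0$, and the director equation through the Neumann heat semigroup $e^{\theta t\Delta_N}$ corresponding to $\frac{\partial d}{\partial x_3}=0$. Writing $v=d-e_3$ and applying Duhamel's formula recasts \eqref{system}--\eqref{i-b condition} as the coupled integral system
\begin{eqnarray*}
u(t)&=&e^{-\mu tA}u_0-\int_0^t e^{-\mu(t-s)A}\,\mathbb P\big[\nabla\cdot(u\otimes u)+\lambda\,\nabla\cdot(\nabla d\odot\nabla d)\big]\,ds,\\
v(t)&=&e^{\theta t\Delta_N}v_0+\int_0^t e^{\theta(t-s)\Delta_N}\big[\theta|\nabla d|^2d-u\cdot\nabla d\big]\,ds.
\end{eqnarray*}
For the existence and uniqueness assertion I would run a contraction-mapping argument for this system in a Kato-type space whose norm controls $\sup_{t>0}\|u(t)\|_{L^3(\mathbb R^3_+)}$ and $\sup_{t>0}\|\nabla d(t)\|_{L^3(\mathbb R^3_+)}$ together with the time-weighted higher-integrability seminorms $\sup_{t>0}t^{\frac12(1-\frac3r)}\|u(t)\|_{L^r(\mathbb R^3_+)}$ (and its analogue for $\nabla d$) for a fixed $r>3$. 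The criticality of $L^3$ is essential here: since $\nabla d$ scales like $u$, the smallness of $\|u_0\|_{L^3}+\|\nabla d_0\|_{L^3}$ makes the nonlinear map a contraction, yielding a unique fixed point. Parabolic smoothing of both semigroups, bootstrapped on $[\tau,\infty)$, then promotes the mild solution to a strong one with the regularity listed in the statement.

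\textbf{Basic $L^2$ decay.} The heart of the matter is the pair of rates $\|u(t)\|_{L^2(\mathbb R^3_+)}\lesssim t^{-3/4}$ and $\|\nabla d(t)\|_{L^2(\mathbb R^3_+)}\lesssim t^{-5/4}$, which are the $L^2$ specializations of the asserted estimates and serve as the seed for everything else. I would first record the global energy law obtained by testing the equations against $(u,\,-(\Delta d+|\nabla d|^2d))$, which gives the dissipation bound $\int_0^\infty(\|\nabla u\|_{L^2}^2+\|\Delta d+|\nabla d|^2d\|_{L^2}^2)\,ds<\infty$ but no rate. To extract rates I would combine the mild formula with the linear decay estimates of the form $\|e^{-\mu tA}\mathbb P\,\nabla\cdot F\|_{L^2(\mathbb R^3_+)}\lesssim t^{-\frac12-\frac32(\frac1q-\frac12)}\|F\|_{L^q(\mathbb R^3_+)}$ (and its Neumann counterpart), feeding in $F\sim u\otimes u+\nabla d\odot\nabla d$, and close the resulting coupled inequalities by a continuity argument. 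Following the scheme flagged in the introduction, this proceeds in three stages: a first continuity argument produces the crude rate $\|\nabla d(t)\|_{L^2}\lesssim t^{-1}$, which upon reinsertion into the Duhamel integrals self-improves to $t^{-\frac54+\epsilon}$ for every $\epsilon>0$, and a final iteration removes the loss to reach the optimal $t^{-5/4}$; the velocity rate $t^{-3/4}$ is obtained in parallel.

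\textbf{Higher-order estimates and $L^r$ bootstrap.} With the basic $L^2$ rates in hand, I would propagate decay to the full range of Lebesgue exponents. The subtlety, emphasized in the introduction, is that controlling $\|\mathbb P(u\cdot\nabla u+\nabla\cdot(\nabla d\odot\nabla d))\|_{L^1(\mathbb R^3_+)}$ forces the appearance of third-order derivatives of $d$; accordingly I would first establish the higher-order decay of $\|\nabla u\|_{L^2}$, $\|\nabla^2 d\|_{L^2}$ and $\|d_t\|_{L^2}$ by differentiating the equations and testing, using the interpolation $\||\nabla d||\nabla^3 d|\|_{L^1}\le\|\nabla d\|_{L^2}\|\nabla^3 d\|_{L^2}$ together with the already-known lower-order rates to absorb the super-critical terms. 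The individual $L^r$ bounds then follow by inserting these estimates into the mild formulas and invoking the $L^q\to L^r$ smoothing and decay properties of $e^{-\mu tA}$ and $e^{\theta t\Delta_N}$, interpolating to cover $r\in(1,\infty]$ for $(u,d-e_3)$, $p\in(1,6]$ for $\nabla u$, and $s,q\in[1,6]$ for $(\nabla d,\nabla^2 d)$.

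\textbf{Main obstacle.} I expect the principal difficulty to be the optimal $L^2$ decay of $\nabla d$: because $\nabla d$ is not divergence-free the term $\nabla\cdot(\nabla d\odot\nabla d)$ is genuinely super-critical, and closing the continuity argument at the borderline rate $t^{-5/4}$ requires the higher-order (third-derivative) control of $d$ to be available simultaneously with the lower-order rates, creating a delicate circular dependence that the three-stage iteration is designed to break. Compounding this, the half-space geometry obstructs the Fourier-splitting techniques available on $\mathbb R^3$ and forces one to handle the boundary contributions directly, while the unboundedness of $\mathbb P$ on $L^1(\mathbb R^3_+)$ confines the $(u,d-e_3)$ estimates to $r>1$ and demands extra care near the $L^1$ endpoint for the gradient quantities.
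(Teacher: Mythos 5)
Your overall architecture (mild formulation, smallness in the critical $L^3$ norm, three-stage improvement of the $\nabla d$ rate, higher-order estimates to handle the third derivatives of $d$) matches the paper's outline, and the Kato-type fixed point you propose for existence is a reasonable substitute for the paper's route (local existence as in Hineman--Wang plus the monotonicity of $\|u\|_{L^3}^3+\|\nabla d\|_{L^3}^3$ under smallness). However, there is a genuine gap at the single most important step: the seed decay $\|u(t)\|_{L^2(\mathbb R^3_+)}\lesssim t^{-3/4}$. You propose to obtain it by feeding $F\sim u\otimes u+\nabla d\odot\nabla d$ into the $L^q\to L^2$ estimate $\|e^{-(t-s)\mathbb A}\mathbb P\nabla\cdot F\|_{L^2}\lesssim (t-s)^{-\frac12-\frac32(\frac1q-\frac12)}\|F\|_{L^q}$ and closing by continuity. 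This does not close: near $s=t$ the kernel is integrable only for $q>\frac65$, and at $q=\frac65$ one has $\|F\|_{L^{6/5}}\le(\|u\|_{L^3}+\|\nabla d\|_{L^3})(\|u\|_{L^2}+\|\nabla d\|_{L^2})$, so the Duhamel tail contributes $\varepsilon_0\int_{t/2}^t(t-s)^{-1+\delta}(1+s)^{-3/4}\,{\rm d}s\sim \varepsilon_0\,t^{\delta}\,t^{-3/4}$, which loses a power $t^{\delta}$ (a logarithm at the endpoint) and cannot be absorbed at the rate $t^{-3/4}$. This is exactly the obstruction that Fourier splitting resolves on $\mathbb R^3$; you correctly observe that Fourier splitting is unavailable on the half-space, but you supply no replacement.

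The paper's replacement is the Borchers--Miyakawa spectral decomposition of the Stokes operator: writing $\mathbb A=\int_0^\infty\lambda\,{\rm d}E_\lambda$, one has $\|\nabla u\|_{L^2}^2\ge\rho\big(\|u\|_{L^2}^2-\|E_\rho u\|_{L^2}^2\big)$, which turns the energy identity into the differential inequality $\frac{d}{dt}g+\rho g\le C\rho\|E_\rho u\|_{L^2}^2+\rho\|\nabla d\|_{L^2}^2$ for $g=\|u\|_{L^2}^2+\|\nabla d\|_{L^2}^2$; the low-frequency piece $E_\rho u(t)$ is then estimated from Duhamel using the bound $\|E_\lambda\mathbb P(u\cdot\nabla)u\|_{L^2}\le C\lambda^{5/4}\|u\|_{L^2}^2$ and an $L^a$ bound on $u$ for $a\in(1,\frac32)$ (itself obtained by a duality argument that moves the gradient onto $\nabla e^{-(t-s)\mathbb A}\varphi$, precisely to avoid both the unboundedness of $\mathbb P$ on $L^1$ and the non-integrable singularity); choosing $\rho=k/(1+t)$ and integrating $(1+t)^kg$ yields first $t^{-1/4}$ and, after one reinsertion, $t^{-3/4}$. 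Without this mechanism (or an equivalent one), your continuity argument for the $L^2$ rates — and hence everything downstream of it — does not go through. You should also note that the crude rate $\|\nabla d\|_{L^2}\lesssim t^{-1}$ is the quantity on which the continuity argument is actually run; the $u$ decay is extracted from the spectral differential inequality under that hypothesis, not "in parallel" by the same Duhamel scheme.
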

Now we state the second main result of this paper.

\begin{theorem} \label{th:E_1} Under the same assumptions of Theorem \ref{th:E_0},
if, in addition, $u_0\in D^1(\mathbb R^3_+,\mathbb R^3)$ and $\nabla d_0\in D^1(\mathbb R^3_+)$,
then 
\begin{eqnarray}
    \|\nabla u(\cdot,t)\|_{L^1(\mathbb R^3_+)}\leq Ct^{-\frac12}
\end{eqnarray}
holds for any $t>0$.
\end{theorem}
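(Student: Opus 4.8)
The plan is to establish the $L^1$-decay rate $\|\nabla u(\cdot,t)\|_{L^1(\mathbb R^3_+)}\lesssim t^{-1/2}$ by representing the velocity gradient through the Stokes semigroup (the Leray projection of the heat flow on $\mathbb R^3_+$) acting on the nonlinear forcing, and then exploiting the decay estimates already furnished by Theorem \ref{th:E_0}. Writing the Duhamel (mild) formulation for $u$, one has schematically
\begin{equation*}
u(t)=e^{t\mathbb A}u_0-\int_0^t e^{(t-s)\mathbb A}\,\mathbb P\Big(u\cdot\nabla u+\nabla\cdot(\nabla d\odot\nabla d)\Big)(s)\,ds,
\end{equation*}
where $\mathbb A$ denotes the Stokes operator on $L^1_\sigma(\mathbb R^3_+)$ and $\mathbb P$ the Leray projection. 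Differentiating in space, I would split the analysis into the linear part $\nabla e^{t\mathbb A}u_0$ and the nonlinear Duhamel integral. For the linear part, the extra hypothesis $u_0\in D^1(\mathbb R^3_+)$ (so that $\nabla u_0\in L^1$) is exactly what is needed: the Stokes semigroup is an $L^1$-contraction-type semigroup on the half-space in the sense that $\|\nabla e^{t\mathbb A}f\|_{L^1}\lesssim t^{-1/2}\|f\|_{L^1}$ and, more to the point, commuting the gradient through yields $\|\nabla e^{t\mathbb A}u_0\|_{L^1}\lesssim \|\nabla u_0\|_{L^1}$ with the borderline decay coming from interpolating against the higher $L^p$-bound. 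So the linear term contributes at the rate $t^{-1/2}$ or better.

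The main work is the nonlinear Duhamel integral. I would use the smoothing estimate for the Stokes semigroup on $\mathbb R^3_+$ in the form $\|\nabla e^{t\mathbb A}\mathbb P\,\nabla\cdot F\|_{L^1}\lesssim t^{-1}\|F\|_{L^1}$ applied to the tensor $F=u\otimes u+\nabla d\odot\nabla d$, since both nonlinearities are divergence structures. This reduces matters to controlling $\int_0^t (t-s)^{-1}\big(\|u(s)\|_{L^2}^2+\|\nabla d(s)\|_{L^2}^2\big)\,ds$ together with the analogous lower-order terms, where I must be careful about the integrability of the kernel singularity at $s=t$. From Theorem \ref{th:E_0}, I have $\|u(s)\|_{L^2}\lesssim s^{-3/4}$ and $\|\nabla d(s)\|_{L^2}\lesssim s^{-5/4}$, hence $\|u\|_{L^2}^2\lesssim s^{-3/2}$ and $\|\nabla d\|_{L^2}^2\lesssim s^{-5/2}$; these quadratic quantities decay fast enough that the $s$-integral near $0$ converges and the tail produces the desired $t^{-1/2}$. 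To handle the non-integrable endpoint kernel $(t-s)^{-1}$, I would split the time integral at $s=t/2$, estimating the near-diagonal piece $s\in(t/2,t)$ by pulling out the slowly-varying factor $\|u(s)\|_{L^2}^2+\|\nabla d(s)\|_{L^2}^2\lesssim t^{-3/2}$ and absorbing the logarithmic kernel into a slightly stronger $L^{1+}$-smoothing bound, and the far piece $s\in(0,t/2)$ by $(t-s)^{-1}\lesssim t^{-1}$ against the integrable decay of the data.

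The hard part will be two intertwined issues. First, the Leray projection $\mathbb P$ is \emph{unbounded} on $L^1(\mathbb R^3_+)$ (as the introduction explicitly flags for the end-point case $p=1$), so I cannot naively apply $L^1$-boundedness of $\mathbb P$; instead I must use the composite operator $e^{t\mathbb A}\mathbb P\,\nabla\cdot$ as a single entity whose kernel enjoys Gaussian-type decay and satisfies the $t^{-1}$ $L^1\to L^1$ bound directly, which is precisely the subtle half-space analysis of Han and others cited earlier. Second, the forcing $\nabla\cdot(\nabla d\odot\nabla d)$ is, as emphasized, more delicate than $u\cdot\nabla u$ because $\nabla d$ is not divergence-free; when one commutes the gradient onto the nonlinearity the genuinely divergence-free structure is lost and one picks up terms of the form $|\nabla d||\nabla^2 d|$, whose $L^1$-norm must be controlled by Cauchy–Schwarz as $\|\nabla d\|_{L^2}\|\nabla^2 d\|_{L^2}\lesssim s^{-5/4}\cdot s^{-7/4}=s^{-3}$ using the second-order decay from Theorem \ref{th:E_0}. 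Verifying that all such auxiliary terms decay quickly enough to preserve the $t^{-1/2}$ rate, and that the boundary conditions \eqref{i-b condition} do not generate obstructing boundary integrals when integrating by parts in the half-space, is where the real care lies; once these kernel and structural estimates are in place, collecting the linear and nonlinear contributions yields the claimed bound.
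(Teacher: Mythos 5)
Your overall architecture (Duhamel formula, semigroup smoothing, the decay rates of Theorem \ref{th:E_0}, splitting the time integral) matches the paper's, but the estimate you lean on for the nonlinear term is exactly the one that is not available. You propose the $L^1\to L^1$ smoothing bound $\|\nabla e^{-(t-s)\mathbb A}\mathbb P\,\nabla\cdot F\|_{L^1(\mathbb R^3_+)}\lesssim (t-s)^{-1}\|F\|_{L^1(\mathbb R^3_+)}$, asserting that the composite kernel ``enjoys Gaussian-type decay.'' On the half space it does not: the kernel of $e^{-t\mathbb A}\mathbb P\,\nabla\cdot$ decays only polynomially in the tangential variables because of the nonlocal pressure, and the unboundedness of $\mathbb P$ on $L^1(\mathbb R^3_+)$ that you yourself flag is precisely why no such bound is proved here or in the cited references. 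The paper circumvents this with the explicit decomposition \eqref{decomposition}, $\mathbb P F=F+\sum_{i,j}\nabla\mathcal N\partial_i\partial_j F_{ij}$, where $\mathcal N=\int_0^\infty\mathcal F(\tau)\,{\rm d}\tau$ is built from the reflected heat kernel and the $\tau$-integral is split at $\tau=1$; the price is estimate \eqref{P}, in which \emph{two} full derivatives land on the quadratic terms, so that $\|\mathbb P(u\cdot\nabla u+\nabla\cdot(\nabla d\odot\nabla d))\|_{L^1}$ is controlled by $\|u\|_{H^1}^2+\|\nabla d\|_{H^1}^2+\||\nabla d||\nabla^3 d|\|_{L^1}$. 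One then only needs the integrable singularity $(t-s)^{-1/2}$ from \eqref{p-q-2}. This also disposes of your near-diagonal problem: your $(t-s)^{-1}$ kernel is not integrable, and ``absorbing the logarithm into a slightly stronger $L^{1+}$-smoothing bound'' cannot work on an unbounded domain, where there is no gain in passing from $L^{1+}$ down to $L^1$.

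The second gap is a consequence of the first: because your scheme never produces the term $\||\nabla d||\nabla^3 d|\|_{L^1}$ (the very difficulty highlighted in the introduction), you stop at $\|\nabla d\|_{L^2}\|\nabla^2 d\|_{L^2}$ and you misidentify where the extra hypotheses enter. The linear term is handled by $\|\nabla e^{-t\mathbb A}u_0\|_{L^1}\le Ct^{-1/2}\|u_0\|_{L^1}$, which uses only $u_0\in L^1$; commuting $\nabla$ through the Stokes semigroup on a half space, as you suggest, is neither legitimate nor needed. The hypotheses $u_0\in D^1$ and $\nabla d_0\in D^1$ are used instead to upgrade the $\sigma(t)$-weighted estimate \eqref{3190} to the unweighted global bound \eqref{4.1}, which gives $\int_0^\infty\|\nabla^3 d(s)\|_{L^2}^2\,{\rm d}s\le C$ down to $s=0$. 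This, via Cauchy--Schwarz on $[0,t/2]$ together with the uniform bound on $\|\nabla^3 d(s)\|_{L^2}$ for $s\ge t/2$ from Lemma \ref{le:higher order} on the remaining piece, is what controls the third-order term at the rate $t^{-1/2}$. Without this ingredient your argument cannot close.
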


\begin{corollary} \label{th:E_2} Under the same assumptions of Theorem \ref{th:E_0}, if,  in addition, 
\begin{eqnarray} \label{weight}
    \int_{\mathbb R^3_+}x_3|u_0(x)|{\rm d}x<\infty,
\end{eqnarray} then the estimates on $u$ can be improved into
\begin{eqnarray}\label{u-estimate}
\begin{cases}
     \|u(\cdot,t)\|_{L^{r}(\mathbb R^3_+)}\leq Ct^{-\frac12-\frac32\left(1-\frac{1}{r}\right)},\\
     \|\nabla u(\cdot,t)\|_{L^p(\mathbb R^3_+)}\leq Ct^{-1-\frac32\left(1-\frac{1}{p}\right)},\\
\end{cases}
\end{eqnarray} hold for any $t>0$, $r\in (1,\infty]$, and $q\in (1,6]$.
\end{corollary}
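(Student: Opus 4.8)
The plan is to work from the Duhamel (mild) formulation of the velocity equation and to trace the whole improvement to a single new ingredient: an improved decay estimate for the Stokes semigroup acting on the weighted datum $u_0$. The nonlinear forcing will be shown to enjoy the faster rate \emph{automatically}, using only the decay already established in Theorem \ref{th:E_0}, so that the weight \eqref{weight} is needed on the linear term alone. Writing $F:=u\otimes u+\lambda\,\nabla d\odot\nabla d$ and using $\nabla\cdot u=0$ to rewrite $u\cdot\nabla u+\lambda\nabla\cdot(\nabla d\odot\nabla d)=\nabla\cdot F$, the momentum equation gives
\begin{eqnarray*}
u(t)=e^{-tA}u_0-\int_0^t e^{-(t-s)A}\mathbb P\,\nabla\cdot F(s)\,ds,
\end{eqnarray*}
where $e^{-tA}$ is the Stokes semigroup on $\mathbb R^3_+$ and $\mathbb P$ the Leray projection.

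First I would establish the \emph{weighted linear estimate}
\begin{eqnarray*}
\|e^{-tA}a\|_{L^r(\mathbb R^3_+)}\le C\,t^{-\frac12-\frac32(1-\frac1r)}\big(\|a\|_{L^1(\mathbb R^3_+)}+\|x_3\,a\|_{L^1(\mathbb R^3_+)}\big),\qquad r\in(1,\infty],
\end{eqnarray*}
for $a\in L^1_\sigma(\mathbb R^3_+)$. The extra factor $t^{-1/2}$ over the rate in Theorem \ref{th:E_0} comes from the reflection structure of the half-space kernel: through Ukai's solution formula the kernel of $e^{-tA}$ is built from the Dirichlet heat kernel $G(x-y,t)-G(x-y^*,t)$ (with $y^*=(y_1,y_2,-y_3)$) together with Riesz-transform corrections. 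Since this difference vanishes on $\{y_3=0\}$, a Taylor expansion in $y_3$ extracts a factor $y_3$ together with a kernel of $\partial_{x_3}G$ type, which decays one half power faster; integrating against $a$ then produces the weight $\|x_3 a\|_{L^1}$ and the gain $t^{-1/2}$. Applying this with $a=u_0$ controls the linear part at the claimed rate.

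Next I would show that the \emph{nonlinear term already decays at the improved rate}, with no weight. By Theorem \ref{th:E_0}, $\|u(s)\|_{L^2}\lesssim s^{-3/4}$ and $\|\nabla d(s)\|_{L^2}\lesssim s^{-5/4}$, so $\|F(s)\|_{L^1(\mathbb R^3_+)}\lesssim s^{-3/2}$ for $s\ge1$ and is bounded on $(0,1]$ by the continuity in $L^2$; in particular $\int_0^\infty\|F(s)\|_{L^1}\,ds<\infty$. Splitting the Duhamel integral at $s=t/2$ and using the composite smoothing $\|e^{-\tau A}\mathbb P\nabla\cdot f\|_{L^r}\lesssim\tau^{-\frac12-\frac32(\frac1m-\frac1r)}\|f\|_{L^m}$, the part over $(0,t/2)$ estimated with $m=1$ gives $t^{-\frac12-\frac32(1-\frac1r)}\int_0^{t/2}\|F\|_{L^1}\,ds\lesssim t^{-\frac12-\frac32(1-\frac1r)}$, while on $(t/2,t)$ one takes $m>1$ (so the singularity at $s=t$ is integrable) and uses $\|F(s)\|_{L^m}\lesssim s^{-3+\frac3{2m}}$ to obtain the faster rate $t^{-\frac52+\frac3{2r}}$. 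Combining the two parts yields the first estimate in \eqref{u-estimate}. For the gradient estimate I would restart the evolution at $t/2$,
\begin{eqnarray*}
\nabla u(t)=\nabla e^{-\frac t2 A}u\big(\tfrac t2\big)-\int_{t/2}^t\nabla e^{-(t-s)A}\mathbb P\,\nabla\cdot F(s)\,ds,
\end{eqnarray*}
bound the first term by $(t/2)^{-1/2}\|u(t/2)\|_{L^p}$ and insert the just-proved improved rate for $u$, giving exactly $t^{-1-\frac32(1-\frac1p)}$, and estimate the remaining integral by letting one derivative fall on $F$ and using the $u,\nabla u,\nabla d,\nabla^2 d$ decay of Theorem \ref{th:E_0}, which produces an even faster rate; this gives the second estimate in \eqref{u-estimate}.

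The hard part will be making all the $L^1$-based estimates rigorous on $\mathbb R^3_+$, where $\mathbb P$ is \emph{unbounded} on $L^1(\mathbb R^3_+)$. Both the weighted linear estimate and the composite bound $\|e^{-\tau A}\mathbb P\nabla\cdot f\|_{L^r}\lesssim\tau^{-\frac12-\frac32(1-\frac1r)}\|f\|_{L^1}$ must therefore be proved by estimating the kernel of the composite operator $e^{-\tau A}\mathbb P\nabla\cdot$ (and of $e^{-tA}$ against the weight) directly, rather than by composing individually bounded maps, and the reflection/antisymmetry identities producing the $t^{-1/2}$ gain have to be carried through the boundary corrections in Ukai's formula. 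Controlling these boundary contributions, and verifying the integrability of $F$ near $s=0$ via the continuity statements of Theorem \ref{th:E_0} rather than the singular decay rates, are the principal technical points; once they are in place the exponent bookkeeping above closes.
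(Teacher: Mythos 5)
Your proposal is correct and matches the paper's argument in all essentials: the weighted datum upgrades the linear part by an extra factor $t^{-1/2}$ (the paper simply quotes this as the estimate \eqref{new-estimate} rather than deriving it from Ukai's formula as you sketch), the nonlinear Duhamel term is shown to decay at the improved rate automatically from the $L^2$ rates of Theorem \ref{th:E_0}, and the gradient bound is obtained by restarting the evolution at $t/2$. The only organizational difference is that the paper first proves the improved $\|u(t)\|_{L^r}$ bound for $r\in(1,\frac32)$ using the full Duhamel integral together with the $L^1$-duality estimate \eqref{3.49} (where the singularity $(t-s)^{-\frac12-\frac32(1-\frac1r)}$ is integrable at $s=t$), and then reaches all $r\in(1,\infty]$ and the gradient estimate by semigroup smoothing $L^{\tilde r}\to L^r$ with $\tilde r<\frac32$; this sidesteps the $L^1\to L^r$ composite kernel estimates for large $r$ (delicate at $r=\infty$, where duality against $C^\infty_{0,\sigma}$ test functions only controls the quotient norm modulo gradients) that your split of the integral over $(0,t/2)$ would otherwise require.
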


It remains to be an interesting question whether the director field $d$ satisfies improved estimates
on $\nabla d$, similar to \eqref{u-estimate}, provided $\int_{\mathbb R^3_+} x_3|\nabla d_0(x)|{\rm d}x<\infty$. 
 
The strong solutions of \eqref{system} from Theorems \ref{th:E_0} and \ref{th:E_1}
obey Duhamel's formula:
\begin{eqnarray} \label{u}
\begin{cases}
   \displaystyle u(t)=e^{-t\mathbb{A}}u_0-\int_0^te^{-(t-s)\mathbb{A}}\mathbb{P}\left(u(s)\cdot\nabla u(s)
   +\nabla\cdot(\nabla d(s)\odot\nabla d(s)\right){\rm d}s,\\
    \displaystyle(d-w_0)(t)=e^{t\Delta}(d_0-w_0)-\int_0^te^{(t-s)\Delta}\left(u(s)\cdot\nabla d(s)-|\nabla d(s)|^2d(s)\right){\rm d}s,
\end{cases}
\end{eqnarray}
and
\begin{eqnarray}
\begin{cases} \label{u-halft}
   \displaystyle u(t)=e^{-\frac{t}{2}\mathbb{A}}u(\frac{t}2)-\int_{\frac{t}{2}}^te^{-(t-s)\mathbb{A}}\mathbb{P}\left(u(s)\cdot\nabla u(s)+\nabla\cdot(\nabla d(s)\odot\nabla d(s)\right){\rm d}s,\\
    \displaystyle (d-w_0)(t)=e^{\frac{t}{2}\Delta}(d-w_0)(\frac{t}2)-\int_{\frac{t}{2}}^te^{(t-s)\Delta}\left(u(s)\cdot\nabla d(s)-|\nabla d(s)|^2d(s)\right){\rm d}s,
\end{cases}
\end{eqnarray}
where $\mathbb A=-\mathbb P\Delta$ is the Stokes operator.

Since the values of $\mu, \lambda$, and $\theta$  do not play any role in this paper,
we will henceforth assume $\mu=\lambda=\theta=1$.

\section{Preliminary estimates}

In this section, we will provide a few basic estimates related to the Stokes operator
$\mathbb A$. We start with the $L^p-L^q$ estimate for the Stokes semigroup, which
can be found in \cite{Fujigaki}.
\begin{lemma}  \label{le:1} For $n\ge 2$ and $1\le q<\infty$,
let $a\in L^q_{\sigma}(\mathbb{R}^n_+, \mathbb R^n)$, then for any non-negative integer
$k$,
it holds
\begin{eqnarray} \label{p-q}
   \|\nabla^ke^{-t\mathbb{A}}a\|_{L^p(\mathbb{R}^n_+)}\leq C_{k,p,q,n}t^{-\frac{k}{2}-\frac{n}{2}(\frac{1}{q}-\frac{1}{p})}\|a\|_{L^q(\mathbb{R}^n_+)}, \ \forall\ t>0,
\end{eqnarray}
where $C_{k,p,q,n}>0$ is independent of $a$, provided either $1\leq q<p\leq\infty$ or $1<q\leq p<\infty$.

Furthermore,
\begin{eqnarray} \label{p-q-2}
   \|\nabla e^{-t\mathbb{A}}a\|_{L^1(\mathbb{R}^n_+)}\leq C_{1,n}t^{-\frac{1}{2}}\|a\|_{L^1(\mathbb{R}^n_+)}, \ \forall\ t>0,
\end{eqnarray}
and (\ref{p-q}) and (\ref{p-q-2}) still hold, if we replace the operator $e^{-t\mathbb{A}}$
by $e^{t\Delta}$ with $a\in L^q(\mathbb{R}^n_+, \mathbb{R}^n)$.

\end{lemma}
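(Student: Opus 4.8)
The plan is to reduce the half-space Stokes semigroup to the heat semigroup---for which sharp $L^q$-$L^p$ bounds follow from Gaussian scaling---and then to absorb the correction terms using the $L^p$-boundedness of Riesz-type singular integrals. The starting point is the heat semigroup on $\mathbb{R}^n_+$. By the reflection principle its kernel may be written as $G_t(x,y)=g_t(x-y)\pm g_t(x-y^\ast)$, where $g_t(x)=(4\pi t)^{-n/2}e^{-|x|^2/(4t)}$ is the free Gaussian, $y^\ast=(y_1,\dots,y_{n-1},-y_n)$ is the reflection of $y$ across $\partial\mathbb{R}^n_+$, and the sign is fixed by the boundary condition. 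Since $\nabla^k g_t(x)=t^{-(n+k)/2}(\nabla^k g_1)(x/\sqrt t)$, a change of variables gives $\|\nabla^k g_t\|_{L^\rho(\mathbb{R}^n)}=c\,t^{-k/2-\frac n2(1-1/\rho)}$ for every $\rho\in[1,\infty]$. Taking $\rho$ with $1+\frac1p=\frac1\rho+\frac1q$ and applying Young's convolution inequality then yields (\ref{p-q}) for $e^{t\Delta}$ on the full range $1\le q\le p\le\infty$; the case $k=1$, $p=q=1$ (so $\rho=1$) gives (\ref{p-q-2}) for $e^{t\Delta}$, since $\nabla g_1\in L^1(\mathbb{R}^n)$.

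For the Stokes semigroup I would invoke the explicit solution formula of Ukai for the Stokes system on $\mathbb{R}^n_+$ (cf. \cite{Fujigaki}), which represents $e^{-t\mathbb{A}}a$ as a finite sum of compositions of the (suitably reflected) heat semigroup with tangential singular integral operators built from the Riesz transforms $R'_j=\partial_{x_j}(-\Delta')^{-1/2}$, $1\le j\le n-1$, where $\Delta'=\sum_{j=1}^{n-1}\partial^2_{x_j}$ acts only in the tangential variable $x'=(x_1,\dots,x_{n-1})$. For $1<p<\infty$ these operators are bounded on $L^p(\mathbb{R}^n_+)$ by the Mikhlin multiplier theorem in $x'$, uniformly in $x_n$, together with Fubini; composing them with the heat bounds of the previous step preserves the time decay and yields (\ref{p-q}) for $e^{-t\mathbb{A}}$ on the range $1<q\le p<\infty$.

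The remaining endpoints, $q=1<p\le\infty$ and $1\le q<p=\infty$, cannot be reached this way, because the Riesz transforms are unbounded on $L^1$ and on $L^\infty$. Here one works with the Stokes kernel $K_t(x,y)$ directly: the reflection and pressure corrections appearing in Ukai's formula can be shown to satisfy Gaussian-type pointwise bounds of the form $|\nabla^k_x K_t(x,y)|\lesssim t^{-k/2}\,(t+|x-y|^2)^{-n/2}+(\text{boundary terms})$, from which the off-diagonal estimates follow by taking the $L^p$-norm of the kernel in $x$ (for an $L^1$ datum) or, via Hölder, its $L^{q'}$-norm in $y$ (for the $L^\infty$ target). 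In particular, the critical estimate (\ref{p-q-2}) for $e^{-t\mathbb{A}}$ reduces to the uniform-in-$y$ bound $\int_{\mathbb{R}^n_+}|\nabla_x K_t(x,y)|\,\mathrm{d}x\le C\,t^{-1/2}$.

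I expect this last point to be the main obstacle. Away from the boundary the Stokes kernel behaves like the Leray-projected Gaussian, whose gradient sits exactly at the borderline of $L^1$-integrability because $\mathbb{P}$ is a Calder\'on--Zygmund operator; the required $t^{-1/2}$ gain must therefore come entirely from the structure of the boundary correction terms, which have to be shown to restore, rather than destroy, the $L^1$-bound. This is precisely the difficulty flagged in the introduction concerning the unboundedness of $\mathbb{P}\colon L^1(\mathbb{R}^n_+)\to L^1_\sigma(\mathbb{R}^n_+)$, and it explains why (\ref{p-q-2}) is isolated as a separate statement.
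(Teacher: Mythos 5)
The paper offers no proof of this lemma: it is quoted from Fujigaki--Miyakawa \cite{Fujigaki} (with the $L^1$ endpoint going back to \cite{Bocher} and the works of Han \cite{Han1,Han2,Han3}), so your sketch can only be measured against the standard literature argument. On that basis, the uncontroversial parts of your plan are correct: the reflected Gaussian kernel together with the scaling identity $\|\nabla^k g_t\|_{L^\rho}=c\,t^{-k/2-\frac n2(1-1/\rho)}$ and Young's inequality does give \eqref{p-q} and \eqref{p-q-2} for $e^{t\Delta}$, and Ukai's formula \cite{Ukai} combined with the $L^p$-boundedness ($1<p<\infty$) of the tangential Riesz operators does give \eqref{p-q} for $e^{-t\mathbb{A}}$ in the range $1<q\le p<\infty$. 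This is essentially how \cite{Fujigaki} proceeds for that range.

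The genuine gap is exactly where you flag it, and your proposed repair does not work. Reducing \eqref{p-q-2} to a uniform-in-$y$ bound $\int_{\mathbb{R}^n_+}|\nabla_xK_t(x,y)|\,{\rm d}x\le Ct^{-1/2}$ would prove the estimate for \emph{every} $a\in L^1$, whereas it is only true on $L^1_\sigma$: the pressure/boundary corrections in the Ukai--Solonnikov representation have kernels with purely polynomial tails of order $|x-y|^{-n}$, and the saving cancellation comes from the datum being divergence free and tangential on $\partial\mathbb{R}^n_+$, not from absolute integrability of the kernel. Moreover, your own model bound $t^{-1/2}(t+|x-y|^2)^{-n/2}$ is not integrable in $x$ (the integral diverges logarithmically at infinity), so even granting that pointwise estimate you could not conclude. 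The actual argument in \cite{Fujigaki,Han1,Han3} splits $e^{-t\mathbb{A}}a$ into a reflected heat part, treated as you describe, plus a remainder expressed through the Neumann function of the Laplacian (compare the operator $\mathcal{N}=\int_0^\infty\mathcal{F}(\tau)\,{\rm d}\tau$ used in Section 2 of this paper to decompose $\mathbb{P}$ in $L^1$), and then uses $\nabla\cdot a=0$ and $a_n|_{x_n=0}=0$ to rewrite the dangerous terms as derivatives that can be integrated by parts before taking $L^1$ norms. Since \eqref{p-q-2} for the Stokes semigroup is precisely the estimate the paper relies on for the $L^1$ decay of $\nabla u$ in Section 4, this endpoint cannot be left as an acknowledged obstacle; as it stands your proposal establishes the heat-semigroup statements and the interior range for the Stokes semigroup, but not the full lemma.
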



Recall that the Stokes operator is defined by
$$\mathbb{A}=-\mathbb{P}\Delta:D(\mathbb{A})\mapsto
L^2_\sigma(\mathbb{R}^3_+, \mathbb R^3),$$
where $D(\mathbb{A})=H^2(\mathbb{R}^3_+,\mathbb R^3)\cap
H^1_{0,\sigma}(\mathbb{R}^3_+,\mathbb R^3)$, and
$$\mathbb{P}: L^r(\mathbb{R}^3_+,\mathbb R^3)\mapsto L^r_\sigma(\mathbb{R}^3_+,\mathbb R^3)$$
is the Leray projection operator,
which is bounded for any $1<r<\infty$. It is well known that $\mathbb{A}$ is a
positive, self-adjoint operator on
$D(\mathbb{A})\subseteq L^2_\sigma(\mathbb{R}^3_+,\mathbb R^3)$,
and there exists a uniquely determined resolution $\{E_\lambda:\lambda\geq
0\}$ of identity in $L^2_\sigma(\mathbb{R}^3_+,\mathbb R^3)$ such that
$\mathbb{A}$ admits a spectral representation
\begin{eqnarray} \label{A}
    \mathbb{A}=\int_0^\infty \lambda{\rm d}E_\lambda
\end{eqnarray} so that
\begin{eqnarray}
    \|\mathbb{A}u\|_{L^2}^2=\int_0^\infty\lambda^2{\rm d}\|E_\lambda u\|_{L^2}^2,
    \ {\rm for}\ u\in L^2_\sigma(\mathbb{R}^3_+,\mathbb R^3).
\end{eqnarray}
For $0\le \lambda_0\le \infty$, let
$E_{\lambda_0}=s-\lim\limits_{\lambda\rightarrow\lambda_0}E_\lambda$
be the strong limit of operators.
From \cite{Sohr}, $\{E_\lambda:\lambda\geq
0\}$ satisfies the following properties:
\begin{itemize}
\item [(i)] $E_\lambda E_\mu=E_\mu E_\lambda=E_\lambda$ for $0\leq\lambda\leq\mu<+\infty$;

\item [(ii)] $E_\lambda=s-\lim\limits_{\mu\rightarrow\lambda}E_\mu$ for $0<\lambda<\mu<+\infty$;

\item [(iii)] $E_0=0$, and $s-\lim\limits_{\mu\rightarrow\infty}E_\mu=I$, the identity operator.
\end{itemize}
For any $\alpha\in (0,1)$, define the fractional order of Stokes operator $\mathbb A^\alpha$ by
\begin{eqnarray}
    \mathbb{A}^\alpha=\int_0^\infty \lambda^\alpha{\rm d}E_\lambda
\end{eqnarray} with
\begin{eqnarray}
    \|\mathbb{A}^\alpha u\|_{L^2}^2=\int_0^\infty\lambda^{2\alpha}{\rm d}\|E_\lambda u\|_{L^2}^2,\ {\rm for}\ u\in L^2_\sigma(\mathbb{R}^3_+,\mathbb R^3).
\end{eqnarray} Then we have that
\begin{eqnarray} \label{A1/2u}
    \nonumber
    \|\nabla u\|_{L^2(\mathbb R^3_+)}^2=\|A^{\frac12} u\|_{L^2(\mathbb R^3_+)}^2
    &=&\int_0^{+\infty}\lambda{\rm d}\|E_\lambda u\|_{L^2}^2
    \geq \nonumber
    \rho\int_\rho^{+\infty}{\rm d}\|E_\lambda u\|_{L^2}^2
    \\
    &=& \nonumber
    \rho\int_0^{+\infty}{\rm d}\|E_\lambda u\|_{L^2}^2-\rho\int_0^{\rho}{\rm d}\|E_\lambda u\|_{L^2}^2
    \\
    &=&\rho\big(\|u\|_{L^2(\mathbb R^3_+)}^2-\|E_\rho u\|_{L^2(\mathbb R^3_+)}^2\big).
\end{eqnarray}

\medskip
Motivated by \cite{Han1}, \cite{Han2} and \cite{Han3}, we perform a decomposition of $\mathbb{P}$ in
$L^1(\mathbb{R}^3_+,\mathbb R^3)$ as follows.
For any $f:\mathbb R_+^3\mapsto \mathbb R$, let $q:\mathbb R^3\mapsto\mathbb R$
solve
\begin{eqnarray}
\begin{cases}
    -\Delta q=f\ {\rm in}\ \mathbb{R}^3_+,\\
    \frac{\partial q}{\partial x_3}=0\ {\rm on}\ \partial \mathbb{R}^3_+.
\end{cases}
\end{eqnarray}
Then $q$ can be represented by
\begin{eqnarray}
     q=\mathcal{N}f,\ \mathcal{N}=\int_0^\infty \mathcal{F}(\tau){\rm d}\tau,
\end{eqnarray}
where the operator $\mathcal{F}$ is defined by
\begin{eqnarray}
    (\mathcal{F}(t)f)(x)=\int_{\mathbb{R}^3_+}(G_t((x'-y',x_3-y_3),t)
    +G_t((x'-y',x_3+y_3),t)f(y){\rm d}y,
\end{eqnarray}
and $G_t(x,t)=(4\pi t)^{-\frac32}e^{-\frac{|x|^2}{4t}}$ is the heat kernel
in $\mathbb R^3$. Then we have
\begin{eqnarray}
   \nonumber \label{decomposition}
   &&\mathbb{P}\left(u\cdot\nabla u+\nabla\cdot(\nabla d\odot\nabla d)\right)
   \\&=&
   \left(u\cdot\nabla u+\nabla\cdot(\nabla d\odot\nabla d)\right)+\sum\limits_{i,j=1}^3\nabla\mathcal{N}\partial_i\partial_j\left(u_iu_j
   +\langle\partial_id,\partial_jd\rangle\right).
\end{eqnarray}
We now need to show the following estimate: for any $0<t<\infty$,
\begin{eqnarray}
    \label{P}
    &&\big\|\sum\limits_{i,j=1}^3\nabla\mathcal{N}\partial_i\partial_j(u_iu_j+\langle\partial_id,\partial_jd\rangle)(t)\big\|_{L^1(\mathbb R^3_+)}\nonumber\\
    &&\leq
    C\big(\|u(t)\|_{H^1(\mathbb R^3_+)}^2+\|\nabla d(t)\|_{H^1(\mathbb R^3_+)}^2+\||\nabla d(t)||\nabla^3d(t)|\|_{L^1(\mathbb R^3_+)}\big).
\end{eqnarray}
In fact, for $1\le m\le 3$,
\begin{eqnarray}
    \nonumber
    &&\big\|\sum\limits_{i,j=1}^3\partial_m\mathcal{N}\partial_i\partial_j
    \big(u_iu_j+\langle\partial_id, \partial_jd\rangle\big)(t)\big\|_{L^1(\mathbb R^3_+)}
    \\&\leq&\nonumber \big\|\sum\limits_{i,j=1}^3\partial_m\int_0^\infty\mathcal{F}(\tau)\partial_i\partial_j\big(u_iu_j+\langle\partial_id,
    \partial_jd\rangle\big)(t){\rm d}\tau\big\|_{L^1(\mathbb R^3_+)}
    \\&\leq&\nonumber
    \big\|\sum\limits_{i,j=1}^3\partial_m\big(\int_0^1+\int_1^\infty\big)G_\tau\ast\big(\partial_i\partial_j\left(u_iu_j+\langle\partial_id,\partial_jd\rangle\big)\big)_\ast(t){\rm d}\tau\right\|_{L^1(\mathbb R^3_+)}
    \\&\leq&\nonumber
    \big\|\sum\limits_{i,j=1}^3\partial_m\int_0^1G_\tau\ast\big(\partial_i\partial_j\big(u_iu_j+\langle\partial_id,\partial_jd\rangle\big)\big)_\ast(t){\rm d}\tau\big\|_{L^1(\mathbb R^3_+)}
    \\&&\nonumber
    +\big\|\sum\limits_{i,j=1}^3\partial_m\partial_i\partial_j\int_1^\infty G_\tau\ast\big(u_iu_j+\langle\partial_id, \partial_jd\rangle\big)_\ast(t){\rm d}\tau\big\|_{L^1(\mathbb R^3_+)}
    \\&\leq&\nonumber
    \sum\limits_{i,j=1}^3\int_0^1\|\partial_m G_\tau\|_{L^1(\mathbb R^3)}{\rm d}\tau\big\|\partial_i\partial_j\big(u_iu_j+\langle\partial_id, \partial_jd\rangle\big)(t)\big\|_{L^1(\mathbb R^3_+)}
    \\&&\nonumber
    +\sum\limits_{i,j=1}^3\int_1^\infty\|\partial_m\partial_i\partial_j G_\tau\|_{L^1(\mathbb R^3)}{\rm d}\tau\big\|\big(u_iu_j+\langle\partial_id,
    \partial_jd\rangle\big)(t)\big\|_{L^1(\mathbb R^3_+)}
        \\
        &\leq&\nonumber
    \sum\limits_{i,j=1}^3(\int_0^1\tau^{-\frac12}{\rm d}\tau)\|\partial_m G_1\|_{L^1(\mathbb R^3)}\big\|\partial_i\partial_j\big(u_iu_j+
    \langle\partial_id, \partial_jd\rangle\big)(t)\big\|_{L^1(\mathbb R^3_+)}
    \\&&\nonumber
    +\sum\limits_{i,j=1}^3(\int_1^\infty\tau^{-\frac32}{\rm d}\tau)
    \|\partial_m\partial_i\partial_j G_1\|_{L^1(\mathbb R^3)}
    \big\|\big(u_iu_j+\langle\partial_id, \partial_jd\rangle\big)(t)\big\|_{L^1(\mathbb R^3_+)}
    \\&\leq&
    C\big(\|u(t)\|_{H^1(\mathbb R^3+)}^2+
    \|\nabla d(t)\|_{H^1(\mathbb R^3_+)}^2
    +\||\nabla d(t)||\nabla^3d(t)|\|_{L^1(\mathbb R^3_+)}\big),
\end{eqnarray}
where $f*g$ represents the convolution of $f$ and $g$,
and $f_*$ represents the even extension of function $f$ with respect to $x_3$
 from $\mathbb{R}^3_+$ to $\mathbb{R}^3$.

Next we need the following estimate (see also \cite{Bocher2}).
\begin{lemma} \label{key-estimate} For any $\alpha_1\in (0,1)$ and $\alpha_2,\alpha_3>0$, it holds
\begin{eqnarray} \label{s}
    \begin{cases}
   \displaystyle \int_0^t(t-s)^{-\alpha_1}s^{-\alpha_2}{\rm d}s\leq
    Ct^{1-\alpha_1-\alpha_2},\ {\rm for}\ 0<\alpha_2<1,\\[3mm]
    \displaystyle\int_{\frac{t}{2}}^t(t-s)^{-\alpha_1}s^{-\alpha_2}{\rm d}s\leq
    Ct^{1-\alpha_1-\alpha_2},\ {\rm for}\ \alpha_2>0,
    \end{cases}
\end{eqnarray}
\begin{eqnarray}
    \int_0^t(t-s)^{-\alpha_1}(1+s)^{-\alpha_3}{\rm d}s\leq
    \begin{cases}
    Ct^{-\alpha_1},\ {\rm for}\ \alpha_3>1,\\
    Ct^{1-\alpha_1-\alpha_3},\ {\rm for}\ 0<\alpha_3<1,\\
    Ct^{-\alpha_1}\ln (1+t),\ {\rm for}\ \alpha_3=1,
    \end{cases}
\end{eqnarray} and
\begin{eqnarray}
    \int_{\frac{t}{2}}^t(t-s)^{-\alpha_1}(1+s)^{-\alpha_3}{\rm d}s\leq
    Ct^{1-\alpha_1-\alpha_3}\ {\rm for}\ \alpha_3>0.
\end{eqnarray}
\end{lemma}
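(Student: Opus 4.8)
The plan is to prove all four estimates by reducing every integral to two elementary operations: (a) factoring out a slowly varying power, and (b) integrating the single integrable singularity $(t-s)^{-\alpha_1}$, which is legitimate precisely because $\alpha_1\in(0,1)$.

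First, for $\int_0^t(t-s)^{-\alpha_1}s^{-\alpha_2}\,\mathrm ds$ with $0<\alpha_2<1$, I would use the scaling substitution $s=t\sigma$. This turns the integral into $t^{1-\alpha_1-\alpha_2}\int_0^1(1-\sigma)^{-\alpha_1}\sigma^{-\alpha_2}\,\mathrm d\sigma$, and the remaining $\sigma$-integral is the Beta function $B(1-\alpha_1,1-\alpha_2)$, which is finite exactly because both $1-\alpha_1>0$ and $1-\alpha_2>0$. The constant is then this Beta value and the $t$-power is forced by homogeneity, which gives the first inequality in \eqref{s}.

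Next, the two half-interval estimates (the second inequality in \eqref{s} and the final displayed inequality) are handled uniformly. On $[t/2,t]$ the factor $s^{-\alpha_2}$ (respectively $(1+s)^{-\alpha_3}$) is monotone and bounded above by its value at the left endpoint, i.e. by $2^{\alpha_2}t^{-\alpha_2}$ (respectively by $(1+t/2)^{-\alpha_3}$). After pulling this factor out, one is left with $\int_{t/2}^t(t-s)^{-\alpha_1}\,\mathrm ds=\frac{(t/2)^{1-\alpha_1}}{1-\alpha_1}=Ct^{1-\alpha_1}$, producing $Ct^{1-\alpha_1-\alpha_2}$ and $C(1+t/2)^{-\alpha_3}t^{1-\alpha_1}$ respectively; for the latter I would absorb $(1+t/2)^{-\alpha_3}$ into $t^{-\alpha_3}$ using the elementary bound $\frac{t}{1+t/2}=\frac{2t}{2+t}\le 2$. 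Crucially, these arguments never integrate the slowly varying factor through the origin, so they impose no upper restriction on $\alpha_2,\alpha_3$, which is exactly why these two inequalities are allowed to hold for all $\alpha_2,\alpha_3>0$.

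Finally, for the full-interval estimate $\int_0^t(t-s)^{-\alpha_1}(1+s)^{-\alpha_3}\,\mathrm ds$ I would split $[0,t]$ at $t/2$. The piece over $[t/2,t]$ is exactly the half-interval estimate just established, contributing $Ct^{1-\alpha_1-\alpha_3}$. Over $[0,t/2]$ the singular factor is harmless, $(t-s)^{-\alpha_1}\le(t/2)^{-\alpha_1}=Ct^{-\alpha_1}$, so after factoring it out the behavior is dictated by $\int_0^{t/2}(1+s)^{-\alpha_3}\,\mathrm ds$, which converges to a constant when $\alpha_3>1$, grows like $t^{1-\alpha_3}$ when $0<\alpha_3<1$, and like $\ln(1+t)$ when $\alpha_3=1$; this is precisely the origin of the three cases. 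The main, and only genuinely delicate, point will be to combine the two pieces into the single claimed power uniformly in $t>0$: for large $t$ the dominant term is clear, but for small $t$ one must check that the ``wrong'' exponent is absorbed, using $t^{1-\alpha}\le t^{-\alpha}$ for $t\le 1$ together with $\ln(1+t)\ge\frac{t}{2}$ on $(0,1]$ in the logarithmic case. Once these elementary comparisons are recorded, each of the three bounds follows.
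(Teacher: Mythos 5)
Your proposal is correct and follows essentially the same route as the paper's sketch: both split the $(1+s)^{-\alpha_3}$ integrals at $t/2$ and bound the non-singular factor on each half before integrating $(t-s)^{-\alpha_1}$. The only cosmetic difference is that for the first estimate you use the scaling $s=t\sigma$ to identify the exact constant as the Beta function $B(1-\alpha_1,1-\alpha_2)$, whereas the paper splits at $t/2$ there as well; your explicit check of uniformity in small $t$ (via $t^{1-\alpha}\le t^{-\alpha}$ for $t\le1$ and $\ln(1+t)\ge t/2$ on $(0,1]$) is a detail the paper's sketch passes over silently.
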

\begin{proof} For convenience of readers,
we sketch a proof. By direct calculations, we have
that for $\alpha_2\in(0,1)$,
\begin{eqnarray}
    \nonumber
    \int_0^t(t-s)^{-\alpha_1}s^{-\alpha_2}{\rm d}s
    &=&\nonumber
    \int_0^{\frac{t}{2}}(t-s)^{-\alpha_1}s^{-\alpha_2}{\rm d}s+\int^t_{\frac{t}{2}}(t-s)^{-\alpha_1}s^{-\alpha_2}{\rm d}s
    \\&\leq&\nonumber
    \big(\frac{t}{2}\big)^{-\alpha_1}\frac{s^{1-\alpha_2}}{1-\alpha_2}\Big|_0^{\frac{t}{2}} +\big(\frac{t}{2}\big)^{-\alpha_2}\frac{-(t-s)^{1-\alpha_1}}{1-\alpha_1}\Big|^t_{\frac{t}{2}}
    \\&\leq&
    Ct^{1-\alpha_1-\alpha_2}.\nonumber
\end{eqnarray} This
yields (\ref{s}).

Similarly, we have
\begin{eqnarray}
    \nonumber
    \int_0^t(t-s)^{-\alpha_1}(1+s)^{-\alpha_3}{\rm d}s
    &=&\nonumber
    \int_0^{\frac{t}{2}}(t-s)^{-\alpha_1}(1+s)^{-\alpha_3}{\rm d}s+\int^t_{\frac{t}{2}}(t-s)^{-\alpha_1}(1+s)^{-\alpha_3}{\rm d}s
    \\&\leq&\nonumber
    \begin{cases}
    \left(\frac{t}{2}\right)^{-\alpha_1}\frac{(1+s)^{1-\alpha_3}}{1-\alpha_3}\Big|_0^{\frac{t}{2}} +\left(1+\frac{t}{2}\right)^{-\alpha_3}\frac{-(t-s)^{1-\alpha_1}}{1-\alpha_1}\Big|^t_{\frac{t}{2}},\ {\rm for}\ \alpha_3\not=1\\
    \left(\frac{t}{2}\right)^{-\alpha_1}\ln(1+s)\Big|_0^{\frac{t}{2}} +\left(1+\frac{t}{2}\right)^{-\alpha_3}\frac{-(t-s)^{1-\alpha_1}}{1-\alpha_1}\Big|^t_{\frac{t}{2}},
    \ {\rm for}\ \alpha_3=1
    \end{cases}
    \\&\leq&
    \begin{cases}
    Ct^{-\alpha_1}, \ {\rm for}\ \alpha_3>1,\\
    Ct^{1-\alpha_1-\alpha_3}, \ {\rm for}\ \alpha_3<1,\\
    Ct^{-\alpha_1}\ln (1+t), \ {\rm for}\ \alpha_3=1.
    \end{cases}\nonumber
\end{eqnarray} This completes the proof.
\end{proof}

\section{Proof of Theorem \ref{th:E_0}}

This section will be devoted to the proof of Theorem \ref{th:E_0}. It is divided into several subsections
and several Lemmas.
\subsection{Global existence of strong solutions}
The  local existence of strong solutions as stated in Theorem
\ref{th:E_0} can be established by the same approach as
\cite{Hineman}, which is omitted.
To show the time interval can be extended globally
and to establish the optimal time decay rates, presented in subsection \ref{sub3.3}
below, we need to obtain a few {\it a priori} estimates.

Assume $(u,d,p)$ is a strong solution to
(\ref{system})--(\ref{i-b condition}) in
$\mathbb{R}^3_+\times[0,T]$ for some $T>0$,
we have

\begin{lemma} \label{le:B} For any $t\in [0,T]$, it
holds that
\begin{eqnarray} \label{basci energy}
    &&\frac{{\rm d}}{{\rm d}t}\big(\|u(t)\|_{L^2(\mathbb R^3_+)}^2
    +\|\nabla d(t)\|_{L^2(\mathbb R^3_+)}^2\big)\nonumber\\
    &&=-
    2\big(\|\nabla u(t)\|_{L^2(\mathbb R^3_+)}^2
    +\|(\Delta d+|\nabla d|^2d(t)\|_{L^2(\mathbb R^3_+)}^2\big).
\end{eqnarray} In particular, it holds that
$$\mathbb{E}(t)=\|u(t)\|_{L^2(\mathbb R^3_+)}^2
+\|\nabla d(t)\|_{L^2(\mathbb R^3_+)}^2
\leq \mathbb{E}(0)=\|u_0\|_{L^2(\mathbb R^3_+)}^2+\|\nabla d_0\|_{L^2(\mathbb R^3_+)}^2.
$$
\end{lemma}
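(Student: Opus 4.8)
The plan is to establish the differential identity by testing the two evolution equations against their natural multipliers and summing, so that the coupling terms cancel and the right-hand side collapses to the dissipation. Throughout I write $\|\cdot\|_{L^2}$ for $\|\cdot\|_{L^2(\mathbb R^3_+)}$.

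First I would multiply the momentum equation $u_t + u\cdot\nabla u + \nabla p = \Delta u - \nabla\cdot(\nabla d\odot\nabla d)$ by $u$ and integrate over $\mathbb R^3_+$. The time term gives $\frac12\frac{d}{dt}\|u\|_{L^2}^2$. Using $\nabla\cdot u = 0$ together with the no-slip condition $u=0$ on $\partial\mathbb R^3_+$, both the convection term $\int_{\mathbb R^3_+}(u\cdot\nabla u)\cdot u$ and the pressure term $\int_{\mathbb R^3_+}\nabla p\cdot u$ vanish after integration by parts, while $\int_{\mathbb R^3_+}\Delta u\cdot u = -\|\nabla u\|_{L^2}^2$, the boundary term $\int_{\partial\mathbb R^3_+}(\partial_n u)\cdot u$ dropping out because $u=0$ there. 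Integrating the coupling term by parts leaves the cross term $I_1 = \int_{\mathbb R^3_+}\sum_{i,j}\langle\partial_i d,\partial_j d\rangle\,\partial_i u_j$, with no surviving boundary contribution (again since $u=0$ on the boundary).

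Next I would compute $\frac{d}{dt}\frac12\|\nabla d\|_{L^2}^2 = \int_{\mathbb R^3_+}\nabla d : \nabla d_t = -\int_{\mathbb R^3_+}\Delta d\cdot d_t$, where the boundary integral $\int_{\partial\mathbb R^3_+}(\partial_n d)\cdot d_t$ produced by integration by parts vanishes thanks to $\partial d/\partial x_3 = 0$ on $\partial\mathbb R^3_+$. Substituting $d_t = \Delta d + |\nabla d|^2 d - u\cdot\nabla d$ produces three pieces. The key algebraic step is to assemble the first two into the full tension-field norm: differentiating the constraint $|d|^2 = 1$ twice gives $\langle d,\Delta d\rangle = -|\nabla d|^2$, whence $\int_{\mathbb R^3_+}\Delta d\cdot(|\nabla d|^2 d) = -\int_{\mathbb R^3_+}|\nabla d|^4$ and therefore $-\int_{\mathbb R^3_+}|\Delta d|^2 - \int_{\mathbb R^3_+}\Delta d\cdot(|\nabla d|^2 d) = -\|\Delta d + |\nabla d|^2 d\|_{L^2}^2$. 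The remaining piece is the cross term $I_2 = \int_{\mathbb R^3_+}\Delta d\cdot(u\cdot\nabla d)$.

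It then remains to show $I_1 + I_2 = 0$. Integrating $I_2$ by parts in $x_i$ gives $-I_1$ plus a term $-\tfrac12\int_{\mathbb R^3_+}u\cdot\nabla|\nabla d|^2$, which vanishes after one further integration by parts using $\nabla\cdot u=0$ (its boundary term again killed by $u=0$). Adding the two identities and multiplying by $2$ yields (\ref{basci energy}); the bound $\mathbb E(t)\le\mathbb E(0)$ is then immediate since the right-hand side is non-positive, after integrating in time. The main obstacle I anticipate is the careful bookkeeping of the boundary contributions on $\partial\mathbb R^3_+$ generated by each integration by parts: every one of them must be shown to vanish, which is precisely where the boundary conditions enter decisively — the no-slip condition $u=0$ annihilates all boundary integrals carrying a factor of $u$ (convection, pressure, viscosity, and both coupling/cross terms), while the Neumann-type condition $\partial d/\partial x_3 = 0$ annihilates the lone boundary integral $\int_{\partial\mathbb R^3_+}(\partial_n d)\cdot d_t$ arising in the $\|\nabla d\|_{L^2}^2$ computation.
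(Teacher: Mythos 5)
Your proof is correct and follows essentially the same energy-method computation the paper sketches: test the momentum equation with $u$, treat the $d$-equation so as to produce $-\|\Delta d+|\nabla d|^2d\|_{L^2}^2$, and cancel the two cross terms using $\nabla\cdot u=0$, $u|_{\partial\mathbb R^3_+}=0$, $\partial d/\partial x_3|_{\partial\mathbb R^3_+}=0$, and $|d|=1$. The only cosmetic difference is that the paper multiplies $(\ref{system})_3$ directly by the tension field $\Delta d+|\nabla d|^2d$, whereas you test with $\Delta d$ and reassemble the tension-field norm via $\langle d,\Delta d\rangle=-|\nabla d|^2$; the two computations are equivalent.
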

\begin{proof} Multiplying (\ref{system})$_1$ by $u$
and (\ref{system})$_3$ by $(\Delta d+|\nabla d|^2d)$, adding and integrating the resulting equations over $\mathbb R^3_+$,
applying (\ref{system})$_2$, the fact that $|d|=1$, and integration by parts, we can obtain (\ref{basci energy}).
\end{proof}

\begin{lemma} \label{le:L3} There exists $C>0$, independent of $T$, such that
for any $0<t<T$, it holds
\begin{eqnarray}
   &&\nonumber
   \frac{{\rm d}}{{\rm d}t}\int_{\mathbb R^3_+}\big(|u(t)|^3+|\nabla d(t)|^3\big){\rm d}x+\big[1-C\|u(t)\|_{L^3(\mathbb R^3_+)}^2\big]\int_{\mathbb R^3_+}|u(t)||\nabla u(t)|^2{\rm d}x
   \\
   &&
   +\big[1-C\big(\|u(t)\|_{L^3(\mathbb R^3_+)}+\|u(t)\|_{L^3(\mathbb R^3_+)}\|\nabla d(t)\|_{L^3(\mathbb R^3_+)}+\|\nabla d(t)\|_{L^3(\mathbb R^3_+)}^2\big)\big]\nonumber\\
   &&\ \ \cdot\int_{\mathbb R^3_+}|\nabla d(t)||\nabla^2 d(t)|^2{\rm d}x\leq 0.
\end{eqnarray}
In particular, there exists $\varepsilon_0>0$ such that if
$$\|u_0\|_{L^3(\mathbb R^3_+)}^3+\|\nabla d_0\|_{L^3(\mathbb R^3_+)}^3
\le\varepsilon_0^3,$$
then for any $0<t<T$,
$$\|u(t)\|_{L^3(\mathbb R^3_+)}^3+\|\nabla d(t)\|_{L^3(\mathbb R^3_+)}^3\leq
\|u_0\|_{L^3(\mathbb R^3_+)}^3+\|\nabla d_0\|_{L^3(\mathbb R^3_+)}^3.
$$
\end{lemma}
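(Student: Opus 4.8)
The plan is to derive the stated differential inequality by testing the two evolution equations against their natural $L^3$-multipliers, isolating the dissipative terms, and then absorbing every remaining term into the dissipation via the smallness of $\|u\|_{L^3}$ and $\|\nabla d\|_{L^3}$. Concretely, I would multiply \eqref{system}$_1$ by $3|u|u$ and integrate over $\mathbb R^3_+$, which produces $\frac{d}{dt}\int_{\mathbb R^3_+}|u|^3$; for the director I would use $\frac{d}{dt}\int_{\mathbb R^3_+}|\nabla d|^3=3\int_{\mathbb R^3_+}|\nabla d|\,\partial_k d\cdot\partial_k d_t$ and substitute $d_t=\Delta d+|\nabla d|^2d-u\cdot\nabla d$ from \eqref{system}$_3$. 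After integration by parts the diffusion terms give the good dissipations $-3\int|u||\nabla u|^2-3\int|u|\,|\nabla|u||^2$ and $-3\int|\nabla d||\nabla^2d|^2-3\int|\nabla d|\,|\nabla|\nabla d||^2$. All boundary integrals on $\partial\mathbb R^3_+$ created here vanish: for $u$ because $u=0$ there, and for $d$ because $\partial_3 d=0$ there, so that each surviving factor is either $\partial_3 d$ itself or a tangential derivative of $\partial_3 d$, hence zero on the boundary. I write $D_u:=\int_{\mathbb R^3_+}|u||\nabla u|^2$ and $D_d:=\int_{\mathbb R^3_+}|\nabla d||\nabla^2d|^2$ for these dissipations.

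Next I would dispose of the structurally cancelling terms. The pure transport contributions reduce, after inserting the weights, to $\int u\cdot\nabla(|u|^3)$ and $\int u\cdot\nabla(|\nabla d|^3)$, both of which vanish since $\nabla\cdot u=0$ and $u=0$ on $\partial\mathbb R^3_+$. In the term from $|\nabla d|^2d$ the component along $d$ drops out because $\partial_k d\cdot d=\tfrac12\partial_k|d|^2=0$, leaving only the genuinely bad term $+3\int|\nabla d|^5$. The remaining bad terms are therefore: this $\int|\nabla d|^5$ term; the elastic coupling $-\nabla\cdot(\nabla d\odot\nabla d)$ from the momentum equation; the coupling $\partial_k u_j\,\partial_j d\cdot\partial_k d$ arising from $\nabla(u\cdot\nabla d)$, which I would integrate by parts once more to move the derivative off $u$ (using $u=0$ on $\partial\mathbb R^3_+$), turning it into a quantity $\lesssim\int|u||\nabla d|^2|\nabla^2d|$ that pairs with $D_d$; and the pressure term.

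All of these are controlled by one mechanism: set $g=|u|^{3/2}$ and $v=|\nabla d|^{3/2}$, observe that the dissipations dominate $\|\nabla g\|_{L^2}^2=\tfrac94\int|u|\,|\nabla|u||^2$ and $\|\nabla v\|_{L^2}^2\lesssim D_d$, and apply the Gagliardo--Nirenberg inequality in $\mathbb R^3_+$ together with Hölder. For example $\int|\nabla d|^5=\|v\|_{L^{10/3}}^{10/3}\lesssim\|v\|_{L^2}^{4/3}\|\nabla v\|_{L^2}^2\lesssim\|\nabla d\|_{L^3}^2\,D_d$, which is precisely the $\|\nabla d\|_{L^3}^2$ factor multiplying $\int|\nabla d||\nabla^2d|^2$; the two coupling terms produce the $\|u\|_{L^3}$, $\|u\|_{L^3}\|\nabla d\|_{L^3}$ and $\|u\|_{L^3}^2$ factors in the same fashion. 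I expect the pressure term to be the main obstacle. Writing it as $-3\int_{\mathbb R^3_+}p\,(u\cdot\nabla|u|)$ and estimating by $3\big(\int|p|^2|u|\big)^{1/2}\big(\int|u|\,|\nabla|u||^2\big)^{1/2}$, one must bound $\|p\|_{L^3}$; this is where the half-space geometry enters, since $p$ solves the Neumann problem $\Delta p=-\partial_i\partial_j(u_iu_j+\partial_i d\cdot\partial_j d)$ with $\partial_3 p=0$ on $\partial\mathbb R^3_+$. Using even reflection in $x_3$ (as elsewhere in the paper) and the Calderón--Zygmund estimate one gets $\|p\|_{L^3}\lesssim\|u\|_{L^6}^2+\|\nabla d\|_{L^6}^2$, after which interpolation back to $\|u\|_{L^3},\|\nabla d\|_{L^3}$ and the dissipations closes this term as well.

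Collecting everything yields the asserted differential inequality. The final assertion then follows from a standard continuity argument: choosing $\varepsilon_0$ so small that $1-C(\cdots)\ge\tfrac12$ whenever $\|u\|_{L^3}^3+\|\nabla d\|_{L^3}^3\le\varepsilon_0^3$, the inequality forces $\frac{d}{dt}\big(\|u\|_{L^3}^3+\|\nabla d\|_{L^3}^3\big)\le0$ on the maximal subinterval of $(0,T)$ on which the smallness persists. Hence that quantity is nonincreasing and cannot exceed its initial value $\|u_0\|_{L^3}^3+\|\nabla d_0\|_{L^3}^3\le\varepsilon_0^3$, so by continuity the smallness, and therefore the positivity of both bracketed coefficients, is preserved for all $t\in(0,T)$, which is exactly the claimed conclusion.
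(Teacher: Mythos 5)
Your overall strategy is exactly the one the paper intends: the paper's own proof consists of citing Hineman--Wang, Lemma 2.1 (multiply the momentum equation by $|u|u$, the differentiated director equation by $|\nabla d|\nabla d$, extract the dissipations $\int|u||\nabla u|^2$ and $\int|\nabla d||\nabla^2 d|^2$, and absorb all remaining terms via Gagliardo--Nirenberg applied to $|u|^{3/2}$ and $|\nabla d|^{3/2}$ together with the smallness of the $L^3$ norms), plus a verification that the boundary integrals vanish. Your reconstruction of that scheme is faithful, your boundary checks (using $u=0$ and the fact that tangential derivatives of $\partial_3 d$ vanish on $\partial\mathbb R^3_+$) are the same ones the paper records explicitly, the cancellation of the $d$-component via $\langle\partial_k d,d\rangle=0$ and the treatment of $\int|\nabla d|^5$ are correct, and the closing continuity argument is standard and right.

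The one step I would push back on is the pressure. Your claim that $p$ solves the Neumann problem with $\partial_3 p=0$ on $\partial\mathbb R^3_+$ is not correct for the no-slip half-space problem: taking the normal component of the momentum equation on the boundary and using $u=0$, $\nabla\cdot u=0$ gives $\partial_3 p=\partial_3^2u_3=-\partial_1\partial_3u_1-\partial_2\partial_3u_2$, which does not vanish in general, so the even-reflection/Calder\'on--Zygmund shortcut does not apply verbatim; the pressure contains a harmonic component generated by the boundary that reflection does not see. The estimate you actually need, $\|p\|_{L^3(\mathbb R^3_+)}\lesssim\|u\|_{L^6}^2+\|\nabla d\|_{L^6}^2$, is nonetheless available, but it has to come from the $L^q$ theory of the Stokes system in the half space (the Cattabriga--Solonnikov-type estimates invoked elsewhere in the paper via \cite{Galdi}, or equivalently the boundedness of the Helmholtz projection on $L^q(\mathbb R^3_+)$), not from naive reflection. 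To be fair, the paper's own proof is silent on this point and simply defers to the whole-space argument of \cite{Hineman}, where $p=R_iR_j(u_iu_j+\langle\partial_i d,\partial_j d\rangle)$ makes the step trivial; you at least identified the pressure as the delicate term. A second, cosmetic remark: the particular powers $\|u\|_{L^3}^2$ on the $u$-dissipation and $\|u\|_{L^3}+\|u\|_{L^3}\|\nabla d\|_{L^3}+\|\nabla d\|_{L^3}^2$ on the $d$-dissipation depend on how one distributes H\"older and Young; your sketch does not track them exactly, but since all these prefactors are small under the hypothesis, this does not affect the conclusion.
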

\begin{proof} We closely follow the proof of \cite{Hineman} Lemma 2.1.
In contrast with \cite{Hineman} Lemma 2.1, we need to verify that boundary
contributions are zero in the process of integration by parts. Take derivative
of \eqref{system}$_3$, multiply the resulting equation by $|\nabla d|\nabla d$, and integrate over $\mathbb R^3_+$, we can check that, for example,
there is no boundary contribution from the following term.
\begin{eqnarray}
    &&\nonumber\
    \int_{\mathbb R^3_+} \nabla\Delta d:|\nabla d|\nabla d{\rm d}x
    \\&=&
   -\underbrace{\int_{\partial\mathbb R^3_+}\langle\frac{\partial^2 d}
   {\partial x_j\partial x_3}, |\nabla d|\frac{\partial d}{\partial x_j}\rangle
   {\rm d}\sigma}_{J}-{\int_{\mathbb R^3_+\cap\{|\nabla d|>0\}}\nabla^2d:\nabla\left(|\nabla d|\nabla d\right){\rm d}x}.
\end{eqnarray}
Since $\frac{\partial d}{\partial x_3}=0$ on $\partial\mathbb R^3_+$,
we have that $\frac{\partial^2 d}{\partial x_j\partial x_3}=0$, for
$j=1,2$, on
$\partial\mathbb R^3_+$ and hence
\begin{eqnarray}
    \nonumber \label{nor-tan}
    J&=&\sum_{j=1}^2\int_{\partial\mathbb R^3_+}
    \langle\frac{\partial^2 d}{\partial x_j\partial x_3}, |\nabla d|
    \frac{\partial d}{\partial x_j}\rangle\,d\sigma=0.
\end{eqnarray}
This yields that
\begin{eqnarray}
    \int_{\mathbb R_+^3} \nabla\Delta d:|\nabla d|\nabla d{\rm d}x
    &=&\nonumber
    -\int_{\mathbb R^3_+\cap\{|\nabla d|>0\}}\big(|\nabla d||\nabla^2d|^2+\frac{|\nabla^2d\cdot\nabla d|^2}{|\nabla d|}\big){\rm d}x
    \\&\leq&
    -\int_{\mathbb R^3_+\cap\{|\nabla d|>0\}}|\nabla d||\nabla^2d|^2{\rm d}x.
\end{eqnarray}
The remaining parts of proof follow \cite{Hineman} Lemma 2.1 line by line,
which is omitted.
\end{proof}

\begin{lemma} \label{le:3.7} There exists $C>0$, independent of $T$, such that
for any $t\in [0,T]$, it holds that
\begin{eqnarray}
   &&\nonumber
   \sigma(t)\big(\|\nabla u(t)\|_{L^2(\mathbb R^3_+)}^2+\|d_t(t)\|_{L^2(\mathbb R^3_+)}^2+\|\nabla^2d(t)\|_{L^2(\mathbb R^3_+)}^2\big)+
   \\&&
   \int_0^t\sigma(s)\big(\|u_t(s)\|_{L^2(\mathbb R^3_+)}^2
   +\|\nabla u(s)\|_{H^1(\mathbb R^3_+)}^2+\|d_t(s)\|_{H^1(\mathbb R^3_+)}^2+\|\nabla^2 d(s)\|_{H^1(\mathbb R^3_+)}^2\big){\rm d}s\leq C,
\end{eqnarray}
where $\sigma(t)=\min\{1, t\}$.
\end{lemma}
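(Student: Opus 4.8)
The plan is to prove a weighted second-order energy inequality and then integrate it in time. Introduce the energy
$$\Phi(t):=\|\nabla u(t)\|_{L^2(\mathbb R^3_+)}^2+\|d_t(t)\|_{L^2(\mathbb R^3_+)}^2+\|\Delta d(t)\|_{L^2(\mathbb R^3_+)}^2,$$
which is equivalent to the quantity in the statement since $\|\nabla^2 d\|_{L^2}\simeq\|\Delta d\|_{L^2}$ by elliptic regularity under the boundary condition $\partial_{x_3}d=0$, together with the dissipation
$$D(t):=\|u_t\|_{L^2}^2+\|\nabla^2 u\|_{L^2}^2+\|\nabla d_t\|_{L^2}^2+\|\nabla^2 d\|_{L^2}^2+\|\nabla^3 d\|_{L^2}^2.$$
Working with the smooth approximating solutions from the local existence theory, I would first establish a differential inequality of the form $\frac{d}{dt}\Phi+c\,D\le 0$ for some $c>0$, valid as long as the critical norm $\|u(t)\|_{L^3}+\|\nabla d(t)\|_{L^3}$ stays $\le C\varepsilon_0$, which is guaranteed for all time by Lemma \ref{le:L3}. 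Multiplying by $\sigma(t)=\min\{1,t\}$ and using $\frac{d}{dt}(\sigma\Phi)=\sigma\frac{d}{dt}\Phi+\sigma'\Phi$ with $0\le\sigma'\le1$ and $\sigma(0)=0$, the non-finite initial value $\Phi(0)$ (uncontrolled under $u_0\in L^3$, $d_0\in D^1$) drops out, which is exactly the role of the weight $\sigma$.

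The inequality is produced by three energy identities. First, testing the momentum equation \eqref{system}$_1$ with $u_t$: the pressure disappears since $\nabla\cdot u_t=0$ and $u_t|_{\partial\mathbb R^3_+}=0$, and $-\int u_t\cdot\Delta u=\frac12\frac{d}{dt}\|\nabla u\|_{L^2}^2$, yielding $\|u_t\|_{L^2}^2+\frac12\frac{d}{dt}\|\nabla u\|_{L^2}^2=(\text{nonlinear})$. Second, differentiating the director equation \eqref{system}$_3$ in time and pairing with $d_t$: this produces $\frac12\frac{d}{dt}\|d_t\|_{L^2}^2+\|\nabla d_t\|_{L^2}^2=(\text{nonlinear})$, where the crucial cancellations are that $\langle d,d_t\rangle=0$ (from $|d|=1$) annihilates the worst term $\int\langle\nabla d,\nabla d_t\rangle\langle d,d_t\rangle$, and $\int u\cdot\nabla d_t\cdot d_t=-\frac12\int u\cdot\nabla|d_t|^2=0$ by $\nabla\cdot u=0$ together with $u\cdot n=0$ on $\partial\mathbb R^3_+$. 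Third, applying $\Delta$ to \eqref{system}$_3$ and pairing with $\Delta d$: this gives $\frac12\frac{d}{dt}\|\Delta d\|_{L^2}^2+\|\nabla\Delta d\|_{L^2}^2=(\text{nonlinear})$. The remaining dissipative norms $\|\nabla^2 u\|_{L^2}$, $\|\nabla^2 d\|_{L^2}$, $\|\nabla^3 d\|_{L^2}$ are recovered from the equations by Stokes and elliptic regularity, e.g. $\|\nabla^2 u\|_{L^2}\lesssim\|\mathbb A u\|_{L^2}\le\|u_t\|_{L^2}+C\|\mathbb P(u\cdot\nabla u)\|_{L^2}+C\|\mathbb P\nabla\cdot(\nabla d\odot\nabla d)\|_{L^2}$ and $\|\nabla^3 d\|_{L^2}\lesssim\|\nabla\Delta d\|_{L^2}$. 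Throughout I must verify, as in Lemma \ref{le:L3}, that every boundary integral generated by these integrations by parts vanishes; here $u=0$ and $\partial_{x_3}d=0$ on $\partial\mathbb R^3_+$ (hence $\partial_{x_3}d_t=0$) are used repeatedly.

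The nonlinear terms are then estimated by Hölder and Gagliardo–Nirenberg so that the small critical factor always sits in $L^3$ and the remaining factor lies in the dissipation: typical bounds are $\|u\cdot\nabla u\|_{L^2}\lesssim\|u\|_{L^3}\|\nabla^2 u\|_{L^2}$, $\|\nabla\cdot(\nabla d\odot\nabla d)\|_{L^2}\lesssim\|\nabla d\|_{L^3}\|\nabla^3 d\|_{L^2}$, and $\int|\nabla d|^2|d_t|^2\lesssim\|\nabla d\|_{L^3}^2\|\nabla d_t\|_{L^2}^2$, so that the smallness from Lemma \ref{le:L3} absorbs them into $c\,D$. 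I expect the main obstacle to be the supercritical cubic contribution $-\int|\nabla d|^2\nabla d\cdot\nabla\Delta d$ arising in the third identity from $\nabla(|\nabla d|^2 d)$, which cannot be controlled by a naive interpolation: scaling forces $\||\nabla d|^3\|_{L^2}\simeq\|\nabla^2 d\|_{L^2}^3$, a cubic term that would destroy the global bound. The resolution is structural; integrating by parts once (the boundary term $\int_{\partial\mathbb R^3_+}|\nabla d|^2(\partial_{x_3}d)\,\Delta d$ vanishes) and using $\langle d,\Delta d\rangle=-|\nabla d|^2$ converts it into $\int|\nabla d|^2|\Delta d|^2+\int(\nabla(|\nabla d|^2)\cdot\nabla d)\,\Delta d$, both of which are genuinely of the form (small $L^3$ factor)$\times$(dissipation), hence $\lesssim\varepsilon_0^2\|\nabla^3 d\|_{L^2}^2$ and absorbable. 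This interplay of the geometric constraint, incompressibility, and the smallness of $\|\nabla d\|_{L^3}$ is exactly what tames the ``third order derivatives of $d$'' flagged in the Introduction.

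Finally, with $\frac{d}{dt}\Phi+c\,D\le0$ in hand, multiplying by $\sigma$ and integrating over $[0,t]$ gives
$$\sigma(t)\Phi(t)+c\int_0^t\sigma(s)D(s)\,{\rm d}s\le\int_0^t\sigma'(s)\Phi(s)\,{\rm d}s\le\int_0^\infty\Phi(s)\,{\rm d}s,$$
where the last step uses $0\le\sigma'\le1$ supported in $[0,1]$. It remains to bound $\int_0^\infty\Phi$ by a $T$-independent constant, and this follows from the basic energy identity of Lemma \ref{le:B}, which yields $\int_0^\infty(\|\nabla u\|_{L^2}^2+\|\Delta d+|\nabla d|^2 d\|_{L^2}^2)\,{\rm d}s\le\frac12\mathbb E(0)$: indeed $\|\Delta d\|_{L^2}\le\|\Delta d+|\nabla d|^2 d\|_{L^2}+C\|\nabla d\|_{L^3}\|\nabla^2 d\|_{L^2}$ and $\|d_t\|_{L^2}\le\|\Delta d+|\nabla d|^2 d\|_{L^2}+\|u\cdot\nabla d\|_{L^2}$ transfer this integrability, after absorbing the small terms, to $\|\Delta d\|_{L^2}$ and $\|d_t\|_{L^2}$. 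Reading off the individual dissipative norms from $\int_0^t\sigma D$ and from $\int_0^t\sigma\Phi\le\int_0^\infty\Phi$ then recovers every term in the statement, completing the proof.
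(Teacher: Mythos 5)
Your overall skeleton --- a $\sigma(t)$-weighted second-order energy inequality, absorption of the nonlinearities via the smallness of $\|u\|_{L^3}+\|\nabla d\|_{L^3}$ guaranteed by Lemma \ref{le:L3}, and control of $\int_0^t\sigma'(s)\Phi(s)\,{\rm d}s$ through the time-integrability of $\|\nabla u\|_{L^2}^2$, $\|d_t\|_{L^2}^2$, $\|\nabla^2 d\|_{L^2}^2$ coming from Lemma \ref{le:B} --- is exactly the paper's, and your first two identities (testing (\ref{system})$_1$ with $u_t$ and $\partial_t$(\ref{system})$_3$ with $d_t$, then recovering $\|\nabla^2 u\|_{L^2}$ by Stokes regularity) match the paper's proof. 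The gap is your third identity. The paper never applies $\Delta$ to the director equation: it obtains $\|\nabla^2 d\|_{L^2}\le C\|d_t\|_{L^2}$ and $\|\nabla^3 d\|_{L^2}\le C\big(\|\nabla d_t\|_{L^2}+\|\nabla(u\cdot\nabla d)\|_{L^2}+\|\nabla(|\nabla d|^2d)\|_{L^2}\big)$ from elliptic regularity for the Neumann problem alone, and there every nonlinear term retains a factor at the critical level, e.g. $\|\nabla(u\cdot\nabla d)\|_{L^2}\le\|\nabla u\|_{L^6}\|\nabla d\|_{L^3}+\|u\|_{L^3}\|\nabla^2 d\|_{L^6}\le C\varepsilon_0(\|\nabla u\|_{H^1}+\|\nabla^2d\|_{H^1})$, so it is absorbed. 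Your $\Delta d$-identity fails at two concrete points.

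First, the boundary terms do not vanish. Writing $-\int\Delta^2d\cdot\Delta d=\|\nabla\Delta d\|_{L^2}^2-\int_{\{x_3=0\}}\partial_3\Delta d\cdot\Delta d$, you would need $\partial_3\Delta d=0$ on $\partial\mathbb R^3_+$; but $\partial_3\Delta d=(\partial_1^2+\partial_2^2)\partial_3d+\partial_3^3d$, only the tangential part vanishes, and restricting (\ref{system})$_3$ to the boundary (where $\partial_3 d_t=0$, $u=0$, $\partial_3 u_3=0$ by incompressibility, and $\partial_3(|\nabla d|^2d)=0$) gives $\partial_3^3d=\partial_3u_1\,\partial_1d+\partial_3u_2\,\partial_2d$, which is generically nonzero since no-slip kills $u$ and its tangential derivatives but not $\partial_3u_1,\partial_3u_2$. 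The same defect appears in $\int_{\partial\mathbb R^3_+}\partial_3(u\cdot\nabla d)\cdot\Delta d$. Second, even ignoring boundary terms, $\Delta(u\cdot\nabla d)$ produces $2\int\nabla u:\nabla^2d\cdot\Delta d$, a trilinear term with \emph{no} factor at the $L^3$-critical level: any H\"older splitting yields something like $\|\nabla u\|_{L^2}\|\nabla^3d\|_{L^2}^{3/2}\|\nabla^2d\|_{L^2}^{1/2}$, which cannot be bounded by $C\varepsilon_0 D$ and would force a Gronwall argument on precisely the quantity you are trying to bound; so the clean inequality $\frac{d}{dt}\Phi+cD\le0$ is not available along your route. (Your integration-by-parts treatment of the cubic $|\nabla d|^2\nabla d\cdot\nabla\Delta d$ term is correct, but it is not the only obstruction.) The repair is simply to delete the third identity: all the second- and third-order norms of $d$ appearing in the statement follow from $\|d_t\|_{L^2}$ and $\|\nabla d_t\|_{L^2}$ via the elliptic estimates above, which is exactly how the paper closes the argument.
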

\begin{proof} Multiplying (\ref{system})$_1$ by $u_t$ and integrating over
$\mathbb R^3_+$, and applying the interpolation inequality and the Sobolev inequality, we obtain
\begin{eqnarray}
  && \nonumber
   \frac12\frac{{\rm d}}{{\rm d}t}\int_{\mathbb R^3_+}|\nabla u|^2{\rm d}x+\int_{\mathbb R^3_+}|u_t|^2
   \leq\nonumber
   \int_{\mathbb R^3_+}\big(|u||\nabla u||u_t|+|\nabla d||\nabla^2d||u_t|\big)
   {\rm d}x
   \\&\leq&\nonumber
   C\|u\|_{L^3(\mathbb R^3_+)}\|\nabla u\|_{L^6(\mathbb R^3_+)}\|u_t\|_{L^2(\mathbb R^3_+)}+C\|\nabla d\|_{L^3(\mathbb R^3_+)}\|\nabla^2d\|_{L^6(\mathbb R^3_+)}\|u_t\|_{L^2(\mathbb R^3_+)}
   \\&\leq&
   C\|u\|_{L^3(\mathbb R^3_+)}\|\nabla u\|_{H^1(\mathbb R^3_+)}\|u_t\|_{L^2(\mathbb R^3_+)}+C\|\nabla d\|_{L^3(\mathbb R^3_+)}\|\nabla^2d\|_{H^1(\mathbb R^3_+)}\|u_t\|_{L^2(\mathbb R^3_+)}.
\end{eqnarray}
By the standard estimates of the Stokes equation on $\mathbb{R}^3_+$ (see \cite{Galdi}), we obtain
\begin{eqnarray}
     \nonumber
     \|\nabla^2 u\|_{L^2(\mathbb R^3_+)}&\leq& C\big(\|u\cdot\nabla u\|_{L^2(\mathbb R^3_+)}+\|\nabla\cdot(\nabla d\odot\nabla d)\|_{L^2(\mathbb R^3_+)}+\|u_t\|_{L^2(\mathbb R^3_+)}\big)
     \\&\leq&\nonumber
     C\big(\|u\|_{L^3(\mathbb R^3_+)}\|\nabla u\|_{L^6(\mathbb R^3_+)}+\|\nabla d\|_{L^3(\mathbb R^3_+)}\|\nabla^2 d\|_{L^6(\mathbb R^3_+)}+\|u_t\|_{L^2(\mathbb R^3_+)}\big)
     \\&\leq&
     C\big(\|u\|_{L^3(\mathbb R^3_+)}\|\nabla u\|_{H^1(\mathbb R^3_+)}+\|\nabla d\|_{L^3(\mathbb R^3_+)}\|\nabla^2 d\|_{H^1(\mathbb R^3_+)}+\|u_t\|_{L^2(\mathbb R^3_+)}\big).
\end{eqnarray}
Combining the above two inequalities, we get
\begin{eqnarray}
   \nonumber\label{nabel u}
   &&\frac{{\rm d}}{{\rm d}t}\|\nabla u\|_{L^2(\mathbb R^3_+)}^2
   +\big(\|u_t\|_{L^2(\mathbb R^3_+)}^2+\|\nabla u\|_{H^1(\mathbb R^3_+)}^2\big)
   \\&&\leq\nonumber
   C\big(\|u\|_{L^3(\mathbb R^3_+)}+\|u\|_{L^3(\mathbb R^3_+)}^2+\|\nabla d\|_{L^3(\mathbb R^3_+)}+\|\nabla d\|_{L^3(\mathbb R^3_+)}^2\big)\nonumber\\
   && \ \ \ \ \ \ \cdot\big(\|\nabla u\|_{H^1(\mathbb R^3_+)}^2+\|u_t\|_{L^2(\mathbb R^3_+)}^2+\|\nabla^2 d\|_{H^1(\mathbb R^3_+)}\big)
+C\|\nabla u\|_{L^2(\mathbb R^3_+)}^2.
\end{eqnarray}

Next, taking $\partial_t$ of (\ref{system})$_3$, we have
\begin{eqnarray}\label{dtt}
   d_{tt}+u_t\cdot\nabla d+u\cdot\nabla d_t=\Delta d_t+2\langle\nabla d,\nabla d_t\rangle d+|\nabla d|^2d_t.
\end{eqnarray}
Multiplying \eqref{dtt} by $d_t$, integrating over $\mathbb R^3_+$,
applying $\frac{\partial d_t}{\partial x_3}=0$
on $\partial\mathbb R^3_+$,  (\ref{system})$_2$, the fact $|d|=1$, and the interpolation inequality
and the Sobolev inequality, we obtain
\begin{eqnarray}
   \nonumber
   &&\frac12\frac{{\rm d}}{{\rm d}t}\int_{\mathbb R^3_+}|d_t|^2{\rm d}x
   +\int_{\mathbb R^3_+}|\nabla d_t|^2{\rm d}x
   \\&\leq&\nonumber
   C\int_{\mathbb R^3_+}\left(|u_t||\nabla d||d_t|+|\nabla d|^2|d_t|^2\right){\rm d}x
   \\&\leq&\nonumber
   C\|u_t\|_{L^2(\mathbb R^3_+)}\|\nabla d\|_{L^3(\mathbb R^3_+)}\|d_t\|_{L^6(\mathbb R^3_+)}+\|\nabla d\|_{L^3(\mathbb R^3_+)}^2\|d_t\|_{L^6(\mathbb R^3_+)}^2
   \\&\leq&\nonumber
   C\|u_t\|_{L^2(\mathbb R^3_+)}\|\nabla d\|_{L^3(\mathbb R^3_+)}\|d_t\|_{H^1(\mathbb R^3_+)}+\|\nabla d\|_{L^3(\mathbb R^3_+)}^2\|d_t\|_{H^1(\mathbb R^3_+)}^2
   \\&\leq&
   C\big(\|\nabla d\|_{L^3(\mathbb R^3_+)}+\|\nabla
   d\|_{L^3(\mathbb R^3_+)}^2\big)\big(\|u_t\|_{L^2(\mathbb R^3_+)}^2+\|d_t\|_{H^1(\mathbb R^3_+)}^2\big).
\end{eqnarray}
By the elliptic estimate, we have
\begin{eqnarray}
    \nonumber
    &&\|\nabla^3 d\|_{L^2(\mathbb R^3_+)}
    \\&\leq& \nonumber
    C\big(\|\nabla d_t\|_{L^2(\mathbb R^3_+)}+\|\nabla\left(u\cdot\nabla d\right)\|_{L^2(\mathbb R^3_+)}+\|\nabla(|\nabla d|^2d)\|_{L^2(\mathbb R^3_+)}\big)
    \\&\leq&\nonumber
    C\big(\|\nabla d_t\|_{L^2(\mathbb R^3_+)}+\|\nabla u\|_{L^6(\mathbb R^3_+)}\|\nabla d\|_{L^3(\mathbb R^3_+)}\nonumber\\
    &&\quad+\|u\|_{L^3(\mathbb R^3_+)}\|\nabla^2d\|_{L^6(\mathbb R^3_+)}+\|\nabla d\|_{L^3(\mathbb R^3_+)}\|\nabla^2d\|_{L^6(\mathbb R^3_+)}\big)
    \nonumber\\
    &\leq&
    C\big[\|\nabla d_t\|_{L^2(\mathbb R^3_+)}+\big(\|u\|_{L^3(\mathbb R^3_+)}+\|\nabla d\|_{L^3(\mathbb R^3_+)}\big)\big(\|\nabla u\|_{H^1(\mathbb R^3_+)}+\|\nabla^2d\|_{H^1(\mathbb R^3_+)}\big)\big],
\end{eqnarray}
where we have used the fact that
$|\nabla d|^2=-d\cdot\Delta d\le |\Delta d|$ in the second inequality.
Combining these two inequalities and using the Cauchy inequality, we obtain
\begin{eqnarray}
   \nonumber\label{ut}
   &&\frac{{\rm d}}{{\rm d}t}\|d_t\|_{L^2(\mathbb R^3_+)}^2
   +\big(\|d_t\|_{H^1(\mathbb R^3_+)}^2+\|\nabla^2 d\|_{H^1(\mathbb R^3_+)}^2\big)
   \\&&\leq\nonumber
   C\big(\|u\|_{L^3(\mathbb R^3_+)}^2+\|\nabla d\|_{L^3(\mathbb R^3_+)}+\|\nabla d\|_{L^3(\mathbb R^3_+)}^2\big)\nonumber\\
   &&\quad\cdot\big(\|\nabla u\|_{H^1(\mathbb R^3_+)}^2+\|u_t\|_{L^2(\mathbb R^3_+)}^2+\|d_t\|_{H^1(\mathbb R^3_+)}^2+\|\nabla^2 d\|_{H^1(\mathbb R^3_+)}^2\big)
   \nonumber\\
   &&\quad+C\left(\|d_t\|_{L^2(\mathbb R^3_+)}^2+\|\nabla^2d\|_{L^2(\mathbb R^3_+)}^2\right).
\end{eqnarray}
Combining (\ref{nabel u}) and (\ref{ut}), we have
\begin{eqnarray} \label{nabelu+dt}
   &&\nonumber
   \frac{{\rm d}}{{\rm d}t}\big(\|\nabla u\|_{L^2(\mathbb R^3_+)}^2+\|d_t\|_{L^2(\mathbb R^3_+)}^2\big)\nonumber\\
   &&\quad+\big(1-C\mathcal{G}(t)\big)
   \big(\|u_t\|_{L^2(\mathbb R^3_+)}^2+\|\nabla u\|_{H^1(\mathbb R^3_+)}^2+\|d_t\|_{H^1(\mathbb R^3_+)}^2+\|\nabla^2 d\|_{H^1(\mathbb R^3_+)}^2\big)
  \nonumber \\
  &&\leq
   C\big(\|\nabla u\|_{L^2(\mathbb R^3_+)}^2+\|d_t\|_{L^2(\mathbb R^3_+)}^2+\|\nabla^2d\|_{L^2(\mathbb R^3_+)}^2\big).
\end{eqnarray} where
$$\mathcal{G}(t)=\big(\|u(t)\|_{L^3(\mathbb R^3_+)}+\|u(t)\|_{L^3(\mathbb R^3_+)}^2+\|\nabla d(t)\|_{L^3(\mathbb R^3_+)}+\|\nabla d(t)\|_{L^3(\mathbb R^3_+)}^2\big).$$

On the other hand, let $\sigma(t)=\min\{1,t\}$. Then we have
\begin{eqnarray}\label{314}
    &&\nonumber
    \frac{{\rm d}}{{\rm d}t}\big[\sigma(t)\int_{\mathbb R^3_+}\big(|\nabla u|^2+|d_t|^2\big){\rm d}x\big]
    \\&\le&
    \int_{\mathbb R^3_+}\big(|\nabla u|^2+|d_t|^2\big){\rm d}x+\sigma(t)\frac{{\rm d}}{{\rm d}t}\int_{\mathbb R^3_+}\big(|\nabla u|^2+|d_t|^2\big){\rm d}x.
\end{eqnarray}
Multiplying (\ref{nabelu+dt}) by $\sigma(t)$ and substituting it into
\eqref{314}, we have
\begin{eqnarray}
   &&\nonumber\label{nabel-u+ut}
   \frac{{\rm d}}{{\rm d}t}\big[\sigma(t)\big(\|\nabla u\|_{L^2(\mathbb R^3_+)}^2+\|d_t\|_{L^2(\mathbb R^3_+)}^2\big)\big]
   \\&&\nonumber
   +\big(1-C\mathcal{G}(t)\big)\sigma(t)\big(\|u_t\|_{L^2(\mathbb R^3_+)}^2+\|\nabla u\|_{H^1(\mathbb R^3_+)}^2+\|d_t\|_{H^1(\mathbb R^3_+)}^2+\|\nabla^2 d\|_{H^1(\mathbb R^3_+)}^2\big)
   \\&&\leq
   C\big(\|\nabla u\|_{L^2(\mathbb R^3_+)}^2+\|d_t\|_{L^2(\mathbb R^3_+)}^2+\|\nabla^2d\|_{L^2(\mathbb R^3_+)}^2\big).
\end{eqnarray}
Observe that
\begin{eqnarray} \label{d-t}
    &&\int_0^t\|d_t(s)\|_{L^2(\mathbb R^3_+)}^2{\rm d}s\nonumber\\
    &&\leq C\int_0^t\big(\|(d_t+u\cdot\nabla d)(s)\|_{L^2(\mathbb R^3_+)}^2
    +\|\nabla u(s)\|_{L^2(\mathbb R^3_+)}^2\|\nabla d(s)\|_{L^3(\mathbb R^3_+)}^2\big){\rm d}s\nonumber\\
    &&\leq C.
\end{eqnarray}
This, together with
$\displaystyle\int_0^t\|\nabla u(s)\|_{L^2(\mathbb R^3_+)}^2{\rm d}s\leq C$, implies
that there exists $t_k\rightarrow 0$ such that
\begin{eqnarray}
   t_k\big(\|\nabla u(t_k)\|^2_{L^2(\mathbb R^3_+)}+\|d_t(t_k)\|_{L^2(\mathbb R^3_+)}^2\big)\rightarrow 0.
\end{eqnarray}
By the elliptic estimates, we have
\begin{eqnarray} \label{nabla-2d}
    \nonumber\|\nabla^2d\|_{L^2(\mathbb R^3_+)}&\leq& C\|d_t+u\cdot\nabla d\|_{L^2(\mathbb R^3_+)}+C\|\nabla d\|_{L^4(\mathbb R^3_+)}^2
    \\&\leq&\nonumber
    C\|d_t+u\cdot\nabla d\|_{L^2(\mathbb R^3_+)}
    +C\|\nabla d\|_{L^3(\mathbb R^3_+)}\|\nabla d\|_{L^6(\mathbb R^3_+)}
    \\&\leq&
    C\|d_t+u\cdot\nabla d\|_{L^2(\mathbb R^3_+)}+C\varepsilon_0\|\nabla^2 d\|_{L^2(\mathbb R^3_+)},
\end{eqnarray}
where we have used the Sobolev inequality \footnote{in fact, since $\frac{\partial d}{\partial x_3}=0$ on $\partial\mathbb R^3_+$, this follows from an even extension of $d$ from $\mathbb R^3_+$ to $\mathbb R^3$. See also \cite{Bocher}}
$$\|\nabla d\|_{L^6(\mathbb R^3_+)}\leq C\|\nabla^2 d\|_{L^2(\mathbb R^3_+)}.
$$
Thus if we choose a sufficiently small $\varepsilon_0>0$,
then we obtain that
\begin{equation}\label{na2dL2}\int_0^t\|\nabla^2d(s)\|^2_{L^2(\mathbb R^3_+)}{\rm d}s\leq C.
\end{equation}

By integrating (\ref{nabel-u+ut}) over $[t_k,t]$ and sending $k$ to
$\infty$, we finally obtain
\begin{eqnarray}
&&\nonumber \label{3.19}
\sigma(t)\big(\|\nabla u(t)\|_{L^2(\mathbb R^3_+)}^2+\|d_t\|_{L^2(\mathbb R^3_+)}^2\big)+\\
&&\int_0^t\sigma(s)\big(\|u_t(s)\|_{L^2(\mathbb R^3_+)}^2+\|\nabla u(s)\|_{H^1(\mathbb R^3_+)}^2+\|d_t(s)\|_{H^1(\mathbb R^3_+)}^2
   +\|\nabla^2 d(s)\|_{H^1(\mathbb R^3_+)}^2\big){\rm d}s\leq C.
\end{eqnarray}
By the elliptic estimate \eqref{nabla-2d}, we have
\begin{eqnarray*}
\|\nabla^2d\|_{L^2(\mathbb R^3_+)}\leq
    C\|d_t\|_{L^2(\mathbb R^3_+)}+C\varepsilon_0\|\nabla^2 d\|_{L^2(\mathbb R^3_+)}.
\end{eqnarray*}
By choosing a sufficiently small $\varepsilon_0>0$,
this yields
\begin{equation}
\label{nabla.2d}
\|\nabla^2d\|_{L^2(\mathbb R^3_+)}\leq C\|d_t\|_{L^2(\mathbb R^3_+)}.
\end{equation}
 Hence we obtain
\begin{eqnarray}
   &&\nonumber \label{3190}
   \sigma(t)\big(\|\nabla u(t)\|_{L^2(\mathbb R^3_+)}^2+\|d_t(t)\|_{L^2(\mathbb R^3_+)}^2+\|\nabla^2d(t)\|_{L^2(\mathbb R^3_+)}^2\big)
   \\&&
   +\int_0^t\sigma(s)\big(\|u_t(s)\|_{L^2(\mathbb R^3_+)}^2+\|\nabla u(s)\|_{H^1(\mathbb R^3_+)}^2+\|d_t(s)\|_{H^1(\mathbb R^3_+)}^2+\|\nabla^2 d(s)\|_{H^1(\mathbb R^3_+)}^2\big){\rm d}s\leq C.
\end{eqnarray}
This completes the proof.
\end{proof}

It is clear that (\ref{3190}) implies that for any small $\tau>0$,
there exists a positive constant $C_\tau$ such that for $\tau<t<T$,
\begin{eqnarray} \label{3.200}
   &&\nonumber
   \big(\|\nabla u(\tau)\|_{L^2(\mathbb R^3_+)}^2+\|\nabla^2d(\tau)\|_{L^2(\mathbb R^3_+)}+\|d_t(\tau)\|_{L^2(\mathbb R^3_+)}^2\big)
   \\&&
   +\int_\tau^t\big(\|u_t(s)\|_{L^2(\mathbb R^3_+)}^2+\|\nabla u(s)\|_{H^1(\mathbb R^3_+)}^2+\|d_t(s)\|_{H^1(\mathbb R^3_+)}^2+\|\nabla^2 d(s)\|_{H^1(\mathbb R^3_+)}^2\big){\rm d}s\leq C_\tau.
\end{eqnarray}
Theorem \ref{th:E_0} follows from higher order elliptic estimates.
More precisely, we have the following Lemma.

\begin{lemma} \label{le:higher order} For any $0<\tau<t<T$,
there exists a positive constant $C_\tau$ such that
\begin{eqnarray}\label{3180}
   &&\nonumber
   \big(\|\nabla^2 u(\tau)\|_{L^2(\mathbb R^3_+)}^2+\|u_t(\tau)\|_{L^2(\mathbb R^3_+)}^2
   +\|\nabla^3 d(\tau)\|_{L^2(\mathbb R^3_+)}^2+\|\nabla d_t(\tau)\|_{L^2(\mathbb R^3_+)}^2\big)+
   \\&&
   \int_\tau^t\big(\|\nabla u_t(s)\|_{L^2(\mathbb R^3_+)}^2+\|\nabla^3 u(s)\|_{L^2(\mathbb R^3_+)}^2+\|\nabla^2 d_t(s)\|_{L^2(\mathbb R^3_+)}^2+\|\nabla^4 d(s)\|_{L^2(\mathbb R^3_+)}^2\big){\rm d}s\leq C_\tau.
\end{eqnarray}
\end{lemma}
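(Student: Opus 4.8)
The plan is to obtain \eqref{3180} by differentiating the equations in time, deriving coupled energy estimates for $u_t$ and $\nabla d_t$ that close by absorption against the smallness constant $\varepsilon_0$ of Lemma \ref{le:L3} together with the integrated bounds \eqref{3.200} of Lemma \ref{le:3.7}, and then recovering the purely spatial derivatives $\nabla^2u,\nabla^3u,\nabla^3d,\nabla^4d$ from Stokes and elliptic regularity. Since $\tau>0$ is fixed and bounded away from the initial layer, the solution is smooth on $[\tau,T]$ and no time weight is strictly needed. Applying \eqref{3.200} with $\tfrac\tau2$ in place of $\tau$ gives $\int_{\tau/2}^{T}\big(\|u_t\|_{L^2(\mathbb R^3_+)}^2+\|\nabla d_t\|_{L^2(\mathbb R^3_+)}^2\big)\,{\rm d}s<\infty$, so by the mean value theorem there is a base point $t_*\in[\tfrac\tau2,\tau]$ at which $\|u_t(t_*)\|_{L^2(\mathbb R^3_+)}^2+\|\nabla d_t(t_*)\|_{L^2(\mathbb R^3_+)}^2$ is finite; all the differential inequalities below will be integrated from $t_*$ (alternatively one could use a weight $\min\{1,t-\tfrac\tau2\}$ as in Lemma \ref{le:3.7}).

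First I would differentiate \eqref{system}$_1$ in $t$, test with $u_t$, and integrate over $\mathbb R^3_+$. Because $u\equiv0$ on $\partial\mathbb R^3_+$ for all time we have $u_t=0$ there, so the pressure and viscous boundary contributions vanish and $\int(u\cdot\nabla u_t)\cdot u_t=0$ by $\nabla\cdot u=0$; this yields
\[
\frac12\frac{\rm d}{{\rm d}t}\|u_t\|_{L^2(\mathbb R^3_+)}^2+\|\nabla u_t\|_{L^2(\mathbb R^3_+)}^2
=-\int_{\mathbb R^3_+}(u_t\cdot\nabla u)\cdot u_t\,{\rm d}x+\int_{\mathbb R^3_+}\big(\nabla d_t\odot\nabla d+\nabla d\odot\nabla d_t\big):\nabla u_t\,{\rm d}x .
\]
In parallel I would use the already time-differentiated director equation \eqref{dtt}, test it with $-\Delta d_t$, and integrate; the boundary terms vanish because $\partial_{x_3}d_t=0$ on $\partial\mathbb R^3_+$, giving
\[
\frac12\frac{\rm d}{{\rm d}t}\|\nabla d_t\|_{L^2(\mathbb R^3_+)}^2+\|\Delta d_t\|_{L^2(\mathbb R^3_+)}^2
=\int_{\mathbb R^3_+}\big(u_t\cdot\nabla d+u\cdot\nabla d_t-2\langle\nabla d,\nabla d_t\rangle d-|\nabla d|^2d_t\big)\cdot\Delta d_t\,{\rm d}x,
\]
where $\|\nabla^2d_t\|_{L^2(\mathbb R^3_+)}\lesssim\|\Delta d_t\|_{L^2(\mathbb R^3_+)}$ follows from the even extension of $d_t$ across $\{x_3=0\}$, exactly as in the footnote to Lemma \ref{le:3.7}. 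Each term on the right of both identities is handled by H\"older, the three-dimensional Gagliardo--Nirenberg inequality and Young's inequality, distributing derivatives so that one factor is measured in $L^3(\mathbb R^3_+)$ (hence small) while the top-order factors $\nabla u_t$ and $\Delta d_t$ are absorbed into the left-hand sides.

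Next I would add the two identities and treat the coupling terms $\int(\nabla d_t\odot\nabla d):\nabla u_t$ and $\int u_t\cdot\nabla d\,\Delta d_t$ by Young's inequality, so that the small factors $\|u\|_{L^3(\mathbb R^3_+)}$, $\|\nabla d\|_{L^3(\mathbb R^3_+)}$ multiply the top-order norms $\|\nabla u_t\|_{L^2(\mathbb R^3_+)}^2+\|\nabla^2d_t\|_{L^2(\mathbb R^3_+)}^2$; choosing $\varepsilon_0$ small lets me absorb them. The remaining lower-order contributions are controlled by the pointwise bounds for $\|\nabla u\|_{L^2}$, $\|\nabla^2d\|_{L^2}$, $\|d_t\|_{L^2}$ and the integrated bounds \eqref{3.200}, and a Gronwall argument over $[t_*,t]$ gives the bounds on $\|u_t(t)\|_{L^2(\mathbb R^3_+)}^2+\|\nabla d_t(t)\|_{L^2(\mathbb R^3_+)}^2$ together with $\int_{t_*}^{t}\big(\|\nabla u_t\|_{L^2(\mathbb R^3_+)}^2+\|\nabla^2d_t\|_{L^2(\mathbb R^3_+)}^2\big)\,{\rm d}s\le C_\tau$ for all $t\ge\tau$. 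Finally I would recover the spatial derivatives: $\|\nabla^2u\|_{L^2}$ and $\|\nabla^3u\|_{L^2}$ from the stationary Stokes estimates (see \cite{Galdi}) applied to $-\Delta u+\nabla p=-u_t-u\cdot\nabla u-\nabla\cdot(\nabla d\odot\nabla d)$ at the $L^2$ and $H^1$ levels respectively, and $\|\nabla^3d\|_{L^2}$, $\|\nabla^4d\|_{L^2}$ from elliptic regularity for $\Delta d=d_t+u\cdot\nabla d-|\nabla d|^2d$ differentiated once and twice, moving the $\varepsilon_0$-small top-order factors to the left in each case.

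The main obstacle will be controlling the coupled, super-critical nonlinear terms produced by time-differentiating $\nabla\cdot(\nabla d\odot\nabla d)$ --- notably $\nabla d_t\odot\nabla d$ tested against $\nabla u_t$ and $u_t\cdot\nabla d$ tested against $\Delta d_t$ --- which mix the velocity and director at top order; these estimates close only because each such term carries one factor in $L^3(\mathbb R^3_+)$ bounded by $\varepsilon_0$. A second recurring technical point is verifying that every boundary integral generated by the repeated integrations by parts vanishes, which rests on $u_t=0$ and $\partial_{x_3}d_t=0$ (together with the tangential relations $\partial_{x_j}\partial_{x_3}d=0$, $j=1,2$) on $\partial\mathbb R^3_+$.
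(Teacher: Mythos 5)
Your proposal is correct and follows essentially the same route as the paper: differentiate the momentum equation in time and test with $u_t$, test the time-differentiated director equation with $\Delta d_t$, absorb the top-order nonlinear terms using the $\varepsilon_0$-smallness of $\|u\|_{L^3(\mathbb R^3_+)}+\|\nabla d\|_{L^3(\mathbb R^3_+)}$ and the identity $\|\nabla^2 d_t\|_{L^2(\mathbb R^3_+)}=\|\Delta d_t\|_{L^2(\mathbb R^3_+)}$, then recover $\nabla^2u,\nabla^3u,\nabla^3d,\nabla^4d$ by Stokes and elliptic regularity. The only (immaterial) difference is that you start the integration from a good base point $t_*\in[\tfrac\tau2,\tau]$ chosen by the mean value theorem, whereas the paper multiplies the differential inequality by the weight $\sigma^3(t)$ and integrates from $0$; both devices handle the initial layer equally well.
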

\begin{proof} First, taking $\partial_t$ of (\ref{system})$_1$,  multiplying the resulting equations by $u_t$, and integrating over $\mathbb{R}^3_+$,
we have
\begin{eqnarray}
     \frac12\frac{{\rm d}}{{\rm d}t}\|u_t\|_{L^2(\mathbb R^3_+)}^2+\|\nabla u_t\|_{L^2(\mathbb R^3_+)}^2\leq \int_{\mathbb{R}^3_+}\big(|u_t||u||\nabla u_t|+|\nabla d_t||\nabla d||\nabla u_t|\big){\rm d}x,
\end{eqnarray}
where we have used
$$\int_{\mathbb{R}^3_+}(u_t\cdot\nabla)u\cdot u_t{\rm d}x
=-\int_{\mathbb R^3_+} (u_t\cdot \nabla) u_t \cdot u.$$
Using the H\"older inequality, the Cauchy inequality and the Sobolev inequality, we get
\begin{eqnarray}
     \nonumber
     \frac12\frac{{\rm d}}{{\rm d}t}\|u_t\|_{L^2(\mathbb R^3_+)}^2+\|\nabla u_t\|_{L^2(\mathbb R^3_+)}^2
     &\leq&
     C\|u\|_{L^3(\mathbb R^3_+)}^2\|u_t\|_{L^6(\mathbb R^3_+)}^2
     +C\|\nabla d\|_{L^3(\mathbb R^3_+)}^2\|\nabla d_t\|_{L^6(\mathbb R^3_+)}^2
     \\&\leq&
     C\|u\|_{L^3(\mathbb R^3_+)}^2\|\nabla u_t\|_{L^2(\mathbb R^3_+)}^2+C\|\nabla d\|_{L^3(\mathbb R^3_+)}^2\|\Delta d_t\|_{L^2(\mathbb R^3_+)}^2,
\end{eqnarray} where we have used the fact that
\begin{eqnarray} \label{second order-d}
\|\nabla^2 d_t\|_{L^2(\mathbb R^3_+)}=\|\Delta d_t\|_{L^2(\mathbb R^3_+)}.
\end{eqnarray}
In fact, by integration by parts and (\ref{i-b condition}), one has
\begin{eqnarray}
    \nonumber
    \int_{\mathbb{R}^3_+}|\Delta d_t|^2{\rm d}x&=&-\int_{\mathbb{R}^3_+}\langle\partial_j\partial_i\partial_id_t, \partial_jd_t\rangle{\rm d}x+\int_{\partial\mathbb R^3_+}\langle\partial_i\partial_id_t,
    \partial_3 d_t\rangle\,d\sigma\\
 &=&-\int_{\mathbb{R}^3_+}\langle\partial_j\partial_i\partial_id_t, \partial_jd_t\rangle{\rm d}x\nonumber\\
&=&\int_{\mathbb{R}^3_+}|\nabla^2 d_t|^2{\rm d}x-\int_{\partial\mathbb R^3_+}(\sum_{j=1}^2\langle\partial_j\partial_3d_t, \partial_jd_t\rangle
+\langle\partial_3^2 d_t, \partial_3 d_t\rangle) {\rm d}\sigma\nonumber\\
&=&\int_{\mathbb{R}^3_+}|\nabla^2 d_t|^2{\rm d}x.\nonumber
\end{eqnarray}

Next, taking $\partial_t$ of (\ref{system})$_3$ and multiplying the resulting equations by $\Delta d_t$, integrating over $\mathbb{R}^3_+$, using integration by parts, (\ref{system})$_2$ and the Cauchy inequality, we have
\begin{eqnarray}
     &&\nonumber\frac12\frac{{\rm d}}{{\rm d}t}
     \|\nabla d_t\|_{L^2(\mathbb{R}^3_+)}^2
     +\|\Delta d_t\|_{L^2(\mathbb R^3_+)}^2
     \\&&\leq \nonumber
     C\int_{\mathbb{R}^3_+}\left(|u_t|^2|\nabla d|^2+|u|^2|\nabla d_t|^2+|\nabla d|^2|\nabla d_t|^2\right){\rm d}x-\int_{\mathbb{R}^3_+}|\nabla d|^2(d_t\cdot\Delta d_t){\rm d}x
     \\&&=
     I_1+I_2.
\end{eqnarray}
By H\"older's inequality, we can estimate $I_1$ by
\begin{eqnarray}
     |I_1|&\leq&\nonumber
     C\|u_t\|_{L^6(\mathbb R^3_+)}^2\|\nabla d\|_{L^3(\mathbb R^3_+)}^2+C\|u\|_{L^3(\mathbb R^3_+)}^2\|\nabla d_t\|_{L^6(\mathbb R^3_+)}^2+C\|\nabla d\|_{L^3(\mathbb R^3_+)}^2\|\nabla d_t\|_{L^6(\mathbb R^3_+)}^2
     \\&\leq&
     C\big(\|u\|_{L^3(\mathbb R^3_+)}^2+\|\nabla d\|_{L^3(\mathbb R^3_+)}^2\big)\big(\|\nabla u_t\|_{L^2(\mathbb R^3_+)}^2+\|\Delta d_t\|_{L^2(\mathbb R^3_+)}^2\big).
\end{eqnarray}
While $I_2$ can be estimated by
\begin{eqnarray}
     |I_2|&\leq&\nonumber
     \int_{\mathbb{R}^3_+}|\nabla d|^2|\nabla d_t|^2{\rm d}x+2\int_{\mathbb{R}^3_+}\langle\nabla d,\nabla^2d\rangle d_t\cdot\nabla d_t{\rm d}x
     \\&\leq&\nonumber
     C\|\nabla d\|_{L^3}^2\|\nabla d_t\|_{L^6}^2+C\|\nabla d\|_{L^6}\|\nabla^2d\|_{L^2}\|d_t\|_{L^6}\|\nabla d_t\|_{L^6}
     \\&\leq&
     \big(C\|\nabla d\|_{L^3}^2+\frac12\big)\|\Delta d_t\|_{L^2(\mathbb R^3_+)}^2+C\|\nabla d\|_{H^1(\mathbb R^3_+)}^4\|\nabla d_t\|_{L^2(\mathbb R^3_+)}^2.
\end{eqnarray}

Putting these two estimates together, we obtain
\begin{eqnarray}\label{3200}
&&\frac{{\rm d}}{{\rm d}t}\big(\|u_t\|_{L^2(\mathbb R^3_+)}^2
+\|\nabla d_t\|_{L^2(\mathbb R^3_+)}^2\big)\nonumber\\
&&+\big(1-C\mathcal{G}(t)\big)\big(\|\nabla u_t\|_{L^2(\mathbb R^3_+)}^2+\|\Delta d_t\|_{L^2(\mathbb R^3_+)}^2\big)\nonumber\\
&&\leq
     C\|\nabla d\|_{H^1(\mathbb R^3_+)}^4\|\nabla d_t\|_{L^2(\mathbb R^3_+)}^2.
\end{eqnarray}
Multiplying \eqref{3200} by $\sigma^3(t)$ and choosing a sufficiently small $\varepsilon_0$, we obtain
\begin{eqnarray}
   &&\nonumber\label{nabell-u+ut}
   \frac{{\rm d}}{{\rm d}t}\big[\sigma^3(t)\big(\|u_t\|_{L^2(\mathbb R^3_+)}^2+\|\nabla d_t\|_{L^2(\mathbb R^3_+)}^2\big)\big]
   +\sigma^3(t)\big(\|\nabla u_t\|_{L^2(\mathbb R^3_+)}^2+\|\Delta d_t\|_{L^2(\mathbb R^3_+)}^2\big)
   \\&&\leq
   3\sigma^2(t)\sigma^\prime(t)\big(\|u_t\|_{L^2(\mathbb R^3_+)}^2+\|\nabla d_t\|_{L^2(\mathbb R^3_+)}^2\big)+C\sigma^3(t)\|\nabla d\|_{H^1(\mathbb R^3_+)}^4\|\nabla d_t\|_{L^2(\mathbb R^3_+)}^2.
\end{eqnarray}
Integrating the above inequality in $[0,t]$ and using (\ref{basci energy}) and (\ref{3190}), we get
\begin{eqnarray}
&&\big(\|u_t(\tau)\|_{L^2(\mathbb R^3_+)}^2+\|\nabla d_t(\tau)\|_{L^2(\mathbb R^3_+)}^2\big)+\int_\tau^t\big(\|\nabla u_t(s)\|_{L^2(\mathbb R^3_+)}^2+\|\Delta d_t(s)\|_{L^2(\mathbb R^3_+)}^2\big){\rm d}s\nonumber\\
&& \leq C_\tau.
\end{eqnarray}
This, combined with  the standard elliptic estimates,
implies \eqref{3180}.
\end{proof}

\subsection{Uniqueness}
To show the uniqueness of global strong solutions obtained in
Theorem \ref{th:E_0}, let $(u_i,d_i,p_i)$ be two strong solutions
to (\ref{system})--(\ref{i-b condition}) as in Theorem
\ref{th:E_0}. Set
$$\widetilde{u}=u_1-u_2, \ \widetilde{p}=p_1-p_2,\
\widetilde{d}=d_1-d_2.$$
Then we have
\begin{eqnarray} \label{system-i}
\begin{cases}
     \widetilde{u}_t+\widetilde{u}\cdot\nabla u_1
     +u_2\cdot\nabla\widetilde{u}+\nabla \widetilde{p}
     =\Delta{\widetilde{u}}-\nabla\cdot(\nabla \widetilde{d}\odot\nabla d_1)-\nabla\cdot(\nabla d_2\odot\nabla \widetilde{d}),\\
     \nabla\cdot \widetilde{u}=0,\\
     \widetilde{d}_t+\widetilde{u}\cdot\nabla d_1+u_2\cdot\nabla\widetilde{d}=\Delta{\widetilde{d}}+\langle\nabla\widetilde{d}, (\nabla d_1+\nabla d_2)\rangle d_1+|\nabla d_2|^2\widetilde{d},
\end{cases}
\end{eqnarray}
along with the initial and boundary conditions
\begin{eqnarray} \label{i-b condition-i}
\begin{cases}
     \widetilde{u}=\frac{\partial \widetilde{d}}{\partial x_3}=0,\ {\rm on}\
     \partial\mathbb R^3_+\times (0,\infty),\\
     (\widetilde{u},\widetilde{d})\rightarrow 0, \ {\rm{as}}\ |x|\rightarrow \infty, \\
     (\widetilde{u}, \ \widetilde{d})\big|_{t=0}=0,\ {\rm in}\ \mathbb{R}^3_+.
\end{cases}
\end{eqnarray}
Multiplying (\ref{system-i})$_1$ by $\widetilde{u}$ and
(\ref{system-i})$_3$ by $\Delta\widetilde{d}$, and applying integration by parts, (\ref{system-i})$_2$, (\ref{i-b condition-i}), the H\"older inequality, the Sobolev inequality and the Cauchy inequality, we obtain
\begin{eqnarray}
    &&\nonumber
    \frac{{\rm d}}{{\rm d}t}\big(\|\widetilde{u}\|_{L^2(\mathbb R^3_+)}^2
    +\|\nabla\widetilde{d}\|_{L^2(\mathbb R^3_+)}^2\big)
    +\big(1-\widetilde{G}(t)\big)\big(\|\nabla\widetilde{u}\|_{L^2(\mathbb R^3_+)}^2+\|\nabla^2\widetilde{d}\|_{L^2(\mathbb R^3_+)}^2\big)
    \\&\leq& \nonumber
    -\int_{\mathbb{R}^3_+}|\nabla d_2|^2\widetilde{d}\cdot\Delta\widetilde{d}{\rm d}x
    \\&\leq&\nonumber
    \int_{\mathbb{R}^3_+}|\nabla d_2|^2|\nabla\widetilde{d}|^2{\rm d}x+2\int_{\mathbb{R}^3_+}|\nabla d_2||\nabla^2 d_2||\widetilde{d}||\nabla\widetilde{d}|{\rm d}x
    \\&\leq&\nonumber
    \|\nabla d_2\|_{L^3(\mathbb R^3_+)}^2\|\nabla \widetilde{d}\|_{L^6(\mathbb R^3_+)}^2+2\int_{\mathbb{R}^3_+}|\nabla d_2||\nabla^2 d_2||\widetilde{d}||\nabla\widetilde{d}|{\rm d}x
    \\&\leq&\nonumber
    \|\nabla d_2\|_{L^3(\mathbb R^3_+)}^2\|\nabla^2 \widetilde{d}\|_{L^2(\mathbb R^3_+)}^2+\big\||\nabla d_2|^\frac12|\nabla^2 d_2|\big\|_{L^2(\mathbb R^3_+)}\big\|\nabla d_2\big\|_{L^3(\mathbb R^3_+)}^\frac12\|\widetilde{d}\|_{L^6(\mathbb R^3_+)}\|\nabla\widetilde{d}\|_{L^6(\mathbb R^3_+)}
    \\&\leq&\nonumber
    \|\nabla d_2\|_{L^3(\mathbb R^3_+)}^2\|\nabla^2 \widetilde{d}\|_{L^2(\mathbb R^3_+)}^2+C\|\nabla d_2\|_{L^3(\mathbb R^3_+)}\|\nabla^2\widetilde{d}\|_{L^2}^2\nonumber\\
    &&+C\big\||\nabla d_2|^\frac12|\nabla^2 d_2|\big\|_{L^2(\mathbb R^3_+)}^2\|\nabla \widetilde{d}\|_{L^2(\mathbb R^3_+)}^2,
\end{eqnarray}
where
$$\widetilde{G}(t)=\sum_{i=1}^2(\|u_i\|_{L^3(\mathbb R^3_+)}
+\|\nabla d_i\|_{L^3(\mathbb R^3_+)}+\|\nabla d_i\|_{L^3(\mathbb R^3_+)}^2).$$
By choosing a sufficiently small $\varepsilon_0$ and integrating over
$[0,t]$,  and applying Lemma \ref{le:L3},
we can conclude that
$$\|\widetilde{u}\|_{L^2(\mathbb R^3_+)}^2(t)+\|\nabla\widetilde{d}\|_{L^2(\mathbb R^3_+)}^2(t)\leq \|\widetilde{u}_0\|_{L^2(\mathbb R^3_+)}^2+\|\nabla\widetilde{d}_0\|_{L^2(\mathbb R^3_+)}^2=0$$
for any $t>0$. This implies that
$(u_1, d_1)\equiv (u_2,d_2)$ and  completes
the proof of uniqueness.

\subsection{Time-decay estimates}\label{sub3.3}

In this subsection, we will apply the continuity argument to
derive the time decay rates stated as in Theorem \ref{th:E_0}.

\begin{lemma} \label{le:3.2} There exists $\widetilde{C}>0$ such that if
\begin{equation}\label{apriori}
   \|\nabla d(t)\|_{L^2(\mathbb R^3_+)}\leq 2\widetilde{C}(1+t)^{-1},
\end{equation}
for any $t\in[0,T]$,
then
\begin{eqnarray}\label{nabla dL2}
   \|\nabla d(t)\|_{L^2(\mathbb R^3_+)}\leq \frac32\widetilde{C}(1+t)^{-1},
\end{eqnarray} for any $t\in[0,T]$.
Moreover, there exists a constant $C>0$ such that
\begin{eqnarray}\label{uL2}
   \|u(t)\|_{L^2(\mathbb R^3_+)}\leq C(1+t)^{-\frac34}
\end{eqnarray}  for any $t\in[0,T]$.
\end{lemma}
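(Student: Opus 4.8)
The plan is to establish both decay bounds by the Fourier (spectral) splitting method of Schonbek, adapted to the half-space through the spectral resolution of the Stokes operator recorded in \eqref{A1/2u} and through even reflection across $\partial\mathbb{R}^3_+$ for the director field. First I would test the momentum equation against $u$ and the director equation against $-\Delta d$, as in the derivation of Lemma \ref{le:B}, to obtain two differential inequalities for $\|u(t)\|_{L^2(\mathbb{R}^3_+)}^2$ and $\|\nabla d(t)\|_{L^2(\mathbb{R}^3_+)}^2$. The supercritical potential term is absorbed by the $L^3$-smallness of Lemma \ref{le:L3}: writing $|\nabla d|^2=-d\cdot\Delta d$ and using $\|\nabla d\|_{L^6}\le C\|\nabla^2 d\|_{L^2}$ together with $\|\nabla d\|_{L^3}\le\varepsilon_0$, one keeps a factor $(1-C\varepsilon_0)$ in front of the dissipation $\|\nabla^2 d\|_{L^2}^2=\|\Delta d\|_{L^2}^2$, where the identity $\|\nabla^2 d\|_{L^2}=\|\Delta d\|_{L^2}$ and the vanishing of every boundary integral follow from $\partial_3 d=0$ on $\partial\mathbb{R}^3_+$, exactly as in the proofs of Lemma \ref{le:L3} and Lemma \ref{le:3.7}.

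Next I would carry out the splitting. For the velocity, inequality \eqref{A1/2u} gives $\|\nabla u\|_{L^2}^2\ge\rho\big(\|u\|_{L^2}^2-\|E_\rho u\|_{L^2}^2\big)$; for the director, even reflection (legitimate because $\partial_3 d=0$) together with Plancherel yields the analogous bound $\|\nabla^2 d\|_{L^2}^2\ge\rho\big(\|\nabla d\|_{L^2}^2-\ell_\rho(t)\big)$, where $\ell_\rho(t)$ collects the low spectral modes of $\nabla d$. Choosing the time-dependent cutoff $\rho=\rho(t)=k(1+t)^{-1}$ converts the first inequality into
$$\frac{d}{dt}\|u\|_{L^2}^2+2\rho\|u\|_{L^2}^2\le 2\rho\|E_\rho u\|_{L^2}^2+(\text{coupling}),$$
and similarly for $\nabla d$. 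The heart of the argument is to estimate the low-frequency remainders $\|E_\rho u\|_{L^2}$ and $\ell_\rho(t)$. I would do this from the Duhamel formulas \eqref{u} and \eqref{u-halft}: the linear parts are controlled by $\|u_0\|_{L^1(\mathbb{R}^3_+)}$ and $\|d_0-e_3\|_{L^1(\mathbb{R}^3_+)}$ through the $L^p$–$L^q$ bounds of Lemma \ref{le:1}, giving remainders of size $\rho^{3/4}$, while the nonlinear parts are handled by the $L^1$ decomposition \eqref{decomposition}–\eqref{P} of the Leray projection and by inserting the a priori assumption \eqref{apriori} together with the $L^3$-smallness; the time integrals that arise are absorbed by Lemma \ref{key-estimate}.

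Finally I would integrate the ODE inequalities against the integrating factor $(1+t)^{2k}$, with $k$ large enough, to obtain $\|u(t)\|_{L^2}^2\le C(1+t)^{-3/2}$, which is \eqref{uL2}, and $\|\nabla d(t)\|_{L^2}^2\le C(1+t)^{-2}$. The constant in the latter depends on the data and, through the nonlinear remainders, on the a priori constant $\widetilde C$; taking $\widetilde C$ large relative to the data and $\varepsilon_0$ small forces $C\le(\tfrac32\widetilde C)^2$, which is precisely the improvement \eqref{nabla dL2} and closes the continuity argument on $[0,T]$.

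I expect the main obstacle to be the low-frequency estimate of the nonlinear term $\nabla\cdot(\nabla d\odot\nabla d)$. Because $\nabla d$ is not divergence free, the Leray projection cannot be discarded and must be treated via \eqref{decomposition}, whose $L^1$ control \eqref{P} brings in $\||\nabla d|\,|\nabla^3 d|\|_{L^1(\mathbb{R}^3_+)}$ and hence third-order information on $d$. Reconciling this with the two distinct target rates — $t^{-3/4}$ for $u$ and the faster $t^{-1}$ for $\nabla d$ — is delicate: one must feed the velocity decay just obtained back into the director equation and calibrate all constants so that the self-consistent loop closes at the sharp prefactor $\tfrac32\widetilde C$ rather than merely at $2\widetilde C$.
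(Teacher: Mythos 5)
Your treatment of the velocity is essentially the paper's: the energy identity, the spectral splitting \eqref{A1/2u} with $\rho=k/(1+t)$, a Duhamel estimate of $E_\rho u$, and integration against $(1+t)^k$. For the director you take a genuinely different route: the paper performs \emph{no} spectral splitting on $\nabla d$ — in \eqref{energy-change} the term $\rho\|\nabla d\|_{L^2}^2$ is simply carried to the right-hand side using the a priori bound \eqref{apriori}, and the improvement \eqref{nabla dL2} is proved separately from the mild formulation \eqref{u-halft}$_2$ via heat-semigroup $L^p$--$L^q$ estimates, the intermediate bound \eqref{d-w0-Lb}, and the interpolation inequalities \eqref{3.13}--\eqref{3.16}. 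The point of that arrangement is visible in \eqref{d-1}: every nonlinear contribution carries an explicit positive power of $\varepsilon_0$, the linear contribution defines $\mathcal C$, and $\widetilde C:=2\mathcal C$; that is what forces the prefactor down to $\frac32\widetilde C$. Your reflected-Plancherel splitting for $\nabla d$ can probably be made to work (the cross term $\int(u\cdot\nabla d)\cdot\Delta d$ is absorbable since $\|u\|_{L^3}\le\varepsilon_0$ by Lemma \ref{le:L3}), but your low-mode remainder $\ell_\rho(t)\lesssim\rho^{5/2}\|d-e_3\|_{L^1}^2$ needs a uniform bound on $\|(d-e_3)(t)\|_{L^1}$, which under \eqref{apriori} alone involves the borderline-divergent integral $\int_0^t\|u\|_{L^2}\|\nabla d\|_{L^2}\,{\rm d}s$ and is \emph{not} small in $\varepsilon_0$; the one-line assertion that choosing $\widetilde C$ large and $\varepsilon_0$ small yields $C\le(\frac32\widetilde C)^2$ conceals exactly the step where the continuity argument could fail to close.

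Two further concrete gaps. First, controlling the low-frequency part of $\mathbb P\nabla\cdot(\nabla d\odot\nabla d)$ through the $L^1$ decomposition \eqref{decomposition}--\eqref{P} costs $\||\nabla d||\nabla^3 d|\|_{L^1}$, and $\|\nabla^3 d(s)\|_{L^2}$ is controlled only away from $s=0$ (Lemma \ref{le:higher order} gives constants $C_\tau$ blowing up as $\tau\to0$, the data being only $L^3_\sigma\times\dot W^{1,3}$); a Duhamel integral over $[0,t]$ therefore diverges near $s=0$. The paper avoids this entirely here: it uses the half-time formula \eqref{u-halft}$_1$ so that $s\ge t/2$, and estimates the stress term $I_3$ in $L^r$ with $r>1$ close to $1$ (see \eqref{nonlinear term}--\eqref{nonlinear term-2}), where $\mathbb P$ is bounded and only $\|\nabla d\|\,\|\nabla^2 d\|$ enters; the $L^1$ decomposition is reserved for Theorem 1.2, which assumes extra regularity of the data. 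Second, the rate $\|u(t)\|_{L^2}\le C(1+t)^{-3/4}$ is not reached in one pass: the linear term $I_1=E_\rho e^{-\frac t2\mathbb A}u(\frac t2)$ is estimated through $\|u(\frac t2)\|_{L^a}$, whose first-pass bound \eqref{u-La} loses a factor $t^{1/2}$ because $\|u\|_{L^2}^2+\|\nabla d\|_{L^2}^2$ has no decay yet; the paper first derives $(1+t)^{-1/4}$, feeds it back into \eqref{3.49} and \eqref{u-La}, and only then obtains $(1+t)^{-3/4}$. Your write-up should make this iteration, and the $\varepsilon_0$-tracking in the director step, explicit.
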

\begin{proof} Without loss of generality, we assume that $t\geq1$.
From (\ref{basci energy}), we have
\begin{eqnarray}
     \frac{d}{dt}\big(\|u\|_{L^2(\mathbb R^3_+)}^2+\|\nabla d\|_{L^2(\mathbb R^3_+)}^2\big)+\|\nabla u\|_{L^2(\mathbb R^3_+)}^2\leq 0.
\end{eqnarray} This, combined with (\ref{A1/2u}),
implies that
\begin{eqnarray}
   \label{energy-change}
   &&\frac{d}{dt}\big(\|u\|_{L^2(\mathbb R^3_+)}^2+\|\nabla d\|_{L^2(\mathbb R^3_+)}^2\big)+\rho\|u\|_{L^2(\mathbb R^3_+)}^2+\rho\|\nabla d\|_{L^2(\mathbb R^3_+)}^2\nonumber\\
   &&\leq
   C\rho\|{E}_\rho u\|_{L^2(\mathbb R^3_+)}^2+\rho\|\nabla d\|_{L^2(\mathbb R^3_+)}^2.
\end{eqnarray}
It follows from (\ref{u-halft})$_1$ that
\begin{eqnarray}\label{Erho}
    E_\rho u(t)&=& E_\rho e^{-\frac{t}{2}\mathbb{A}}u(\frac{t}2)-E_\rho\int_{\frac{t}{2}}^te^{-(t-s)\mathbb{A}}\mathbb{P}(u\cdot\nabla u)(s){\rm d}s
    \nonumber\\
    &&-E_\rho\int_{\frac{t}{2}}^te^{-(t-s)\mathbb{A}}\mathbb{P}\nabla\cdot(\nabla d\odot\nabla d)(s){\rm d}s\nonumber
    \\
    &=&
    I_1-I_2-I_3.
\end{eqnarray}
By (\ref{A}) and calculations similar to \cite{Bocher} (page 150), we have
\begin{eqnarray}
    \nonumber
    I_2&=&\int_{\frac{t}{2}}^t\big[\int_0^\rho e^{-\lambda(t-s)}{\rm d}(E_\lambda\left(\mathbb{P}\left(u\cdot\nabla u\right)\right)(s))\big]{\rm d}s
    \\&=&\nonumber
    \int_{\frac{t}{2}}^t e^{-\rho(t-s)}E_\rho\left(\mathbb{P}\left(u\cdot\nabla u\right)\right)(s){\rm d}s
    \\&&
    +\int_{\frac{t}{2}}^t(t-s)\big[\int_0^\rho e^{-\lambda(t-s)}E_\lambda\big(\mathbb{P}\big(u\cdot\nabla u\big)\big)(s){\rm d}\lambda\big]{\rm d}s.
\end{eqnarray}
This, combined with the classical estimate (see \cite{Bocher} Lemma
4.3):
\begin{eqnarray}
    \|E_\lambda\mathbb{P}(u\cdot\nabla)u\|_{L^2(\mathbb R^3_+)}\leq C\lambda^{\frac{5}{4}}\|u\|_{L^2(\mathbb R^3_+)}^2,
\end{eqnarray}
implies that
\begin{eqnarray}\label{I2}
   \big\|I_2\big\|_{L^2(\mathbb R^3_+)}\leq C\rho^{\frac54}\int_{\frac{t}{2}}^t\|u(s)\|_{L^2(\mathbb R^3_+)}^2{\rm d}s.
\end{eqnarray}

By the Minkowski inequality and the
H\"older inequality, Lemma
\ref{le:1} for $p=2$ and $q=r\in(1,2]$, the boundedness of
$$\mathbb{P}: L^r(\mathbb{R}^3_+,\mathbb R^3)\mapsto
L^r_\sigma(\mathbb{R}^3_+,\mathbb R^3),\ \forall\ r\in(1,\infty),$$
the fact
$$\|E_\rho f\|_{L^2(\mathbb R^3_+)}\leq \|f\|_{L^2(\mathbb R^3_+)}, \ f\in L^2_\sigma(\mathbb R^3_+,\mathbb R^3),$$
and (\ref{3.19}), we can estimate $I_3$ as follows.
\begin{eqnarray}
    \label{nonlinear term}
    \|-I_3\|_{L^2(\mathbb R^3_+)}
    &&=
    \big\|E_\rho\int_{\frac{t}{2}}^te^{-(t-s)\mathbb{A}}\mathbb{P}\nabla\cdot(\nabla d\odot\nabla d)(s){\rm d}s\big\|_{L^2(\mathbb R^3_+)}
    \nonumber\\
    &&\leq
    \big\|\int_{\frac{t}{2}}^te^{-(t-s)\mathbb{A}}\mathbb{P}\nabla\cdot(\nabla d\odot\nabla d)(s){\rm d}s\big\|_{L^2(\mathbb R^3_+)}
    \nonumber\\
    &&\leq
    \int_{\frac{t}{2}}^t\big\|e^{-(t-s)\mathbb{A}}\mathbb{P}\nabla\cdot(\nabla d\odot\nabla d)(s)\big\|_{L^2(\mathbb R^3_+)}{\rm d}s
    \nonumber\\
    &&\leq
    C\int_{\frac{t}{2}}^t(t-s)^{-\frac32\big(\frac{1}{r}-\frac12\big)}\big\|\mathbb{P}\nabla\cdot(\nabla d\odot\nabla d)(s)\big\|_{L^r(\mathbb R^3_+)}{\rm d}s
    \nonumber\\
    &&\leq
    C\int_{\frac{t}{2}}^t(t-s)^{-\frac32\big(\frac{1}{r}-\frac12\big)}\|\nabla d(s)\|_{L^{\frac{2r}{2-r}}(\mathbb R^3_+)}\|\nabla^2 d(s)\|_{L^2(\mathbb R^3_+)}{\rm d}s
    \nonumber\\
    &&\leq
    C\int_{\frac{t}{2}}^t(t-s)^{-\frac32\big(\frac{1}{r}-\frac12\big)}
    \|\nabla d(s)\|_{L^2(\mathbb R^3_+)}^{\frac{3-2r}{r}}\|\nabla^2 d(s)\|_{L^2(\mathbb R^3_+)}^{1+\frac{3r-3}{r}}{\rm d}s.
\end{eqnarray}
This, combined with the H\"older inequality, (\ref{apriori}), and Lemma \ref{key-estimate}, yields
\begin{eqnarray}
    \label{nonlinear term-2}
    &&\|I_3\|_{L^2(\mathbb R^3_+)}
    \nonumber\\
    &&\leq
    C\big(\int_{\frac{t}{2}}^t(t-s)^{-2(\frac{1}{r}-\frac12)}\|\nabla d(s)\|_{L^2(\mathbb R^3_+)}^{\frac{4(3-2r)}{3r}}{\rm d}s\big)^{\frac34}
    \big(\int_{\frac{t}{2}}^t\|\nabla^2 d(s)\|_{L^2(\mathbb R^3_+)}^{4(1+\frac{3r-3}{r})}{\rm d}s\big)^{\frac14}
    \nonumber\\
    &&\leq
    C\big(\int_{\frac{t}{2}}^t(t-s)^{-2(\frac{1}{r}-\frac12)}(1+s)^{-\frac{4(3-2r)}{3r}}{\rm d}s\big)^{\frac{3}{4}}\nonumber\\
    &&\ \ \cdot\big(\sup\limits_{s\in [\frac{t}2,t]}\|\nabla^2 d(s)\|_{L^2(\mathbb R^3_+)}\big)
    ^{\frac72-\frac{3}{r}}\big(\int_{\frac{t}{2}}^t\|\nabla^2 d(s)\|_{L^2(\mathbb R^3_+)}^2{\rm d}s\big)^{\frac14}
    \nonumber\\
    &&\leq
    C\big(\int_{\frac{t}{2}}^t(t-s)^{-2(\frac{1}{r}-\frac12)}(1+s)^{-\frac{4(3-2r)}{3r}}{\rm d}s\big)^{\frac{3}{4}}\nonumber\\
    &&\leq
    C\big(1+t\big)^{\frac34(1-2(\frac{1}{r}-\frac12)-\frac{4(3-2r)}{3r})}
    \leq
    C(1+t)^{-\frac34},
\end{eqnarray}
for some $r\in (1,\frac{18}{17})$.

To estimate $I_1$, we need to estimate
the upper bound of
$\|u(t)\|_{L^a(\mathbb R^3_+)}$ for $a\in(1,\frac32)$.
From (\ref{u})$_1$, Lemma \ref{le:1} and Lemma \ref{le:B}, we have that
\begin{eqnarray} \label{u-La}
\|u(t)\|_{L^a(\mathbb R^3_+)}
&\leq&Ct^{-\frac32(1-\frac{1}{a})}\|u_0\|_{L^1(\mathbb R^3_+)}
+\big\|\int_0^te^{-(t-s)\mathbb{A}}\mathbb{P}\big(u\cdot\nabla
u-\nabla\cdot(\nabla d\odot\nabla d)\big){\rm
d}s\big\|_{L^a(\mathbb R^3_+)}\nonumber\\
&\leq&
Ct^{-\frac32(1-\frac{1}{a})}\|u_0\|_{L^1(\mathbb R^3_+)}
+\int_0^t(t-s)^{-\frac12-\frac32(1-\frac{1}{a})}{\rm
d}s\leq Ct^{\frac12-\frac32(1-\frac{1}{a})},
\end{eqnarray}
where we have used the following estimate: for any $a\in (1,\frac32)$ and
$t>0$,
\begin{eqnarray}\label{3.49}
&&\big\|\int_0^te^{-(t-s)\mathbb{A}}\mathbb{P}\left(u\cdot\nabla u-\nabla\cdot(\nabla d\odot\nabla d)\right){\rm d}s\big\|_{L^a(\mathbb R^3_+)}
     \nonumber\\
     &=&\sup_{\varphi\in
C^\infty_{0,\sigma}(\mathbb{R}^3_+,\mathbb R^3), \|\varphi\|_{L^{\frac{a}{a-1}}(\mathbb R^3_+)}\le 1}\big|\langle\int_0^te^{-(t-s)\mathbb{A}}\mathbb{P}\big(u\cdot\nabla u-\nabla\cdot(\nabla d\odot\nabla d)\big){\rm d}s,\varphi\rangle\big|
     \nonumber\\
     &=&\nonumber
     \sup_{\varphi\in
C^\infty_{0,\sigma}(\mathbb{R}^3_+,\mathbb R^3), \|\varphi\|_{L^{\frac{a}{a-1}}(\mathbb R^3_+)}\le 1}\big|\langle\int_0^te^{-(t-s)\mathbb{A}}\mathbb{P}\nabla\cdot\big(u\otimes u-\nabla d\odot\nabla d\big){\rm d}s,\varphi\rangle\big|
     \\&=&\nonumber
    \sup_{\varphi\in
C^\infty_{0,\sigma}(\mathbb{R}^3_+,\mathbb R^3), \|\varphi\|_{L^{\frac{a}{a-1}}(\mathbb R^3_+)}\le 1} \big|\langle\int_0^t\big(u\otimes u-\nabla d\odot\nabla d\big){\rm d}s,
     \nabla e^{-(t-s)\mathbb{A}}\varphi\rangle\big|
     \\
     &\leq&\nonumber
     \sup_{\varphi\in
C^\infty_{0,\sigma}(\mathbb{R}^3_+,\mathbb R^3), \|\varphi\|_{L^{\frac{a}{a-1}}(\mathbb R^3_+)}\le 1}\int_0^t\|\nabla e^{-(t-s)\mathbb{A}}\varphi\|_{L^\infty(\mathbb R^3_+)}\|u\otimes u-\nabla d\odot\nabla d\|_{L^1(\mathbb R^3_+)}{\rm d}s
     \\&\leq&
     \int_0^t(t-s)^{-\frac12-\frac32(1-\frac{1}{a})}
     \big(\|u(s)\|_{L^2(\mathbb R^3_+)}^2+\|\nabla d(s)\|_{L^2(\mathbb R^3_+)}^2\big){\rm d}s.
\end{eqnarray}

Now we can estimate $\|I_1\|_{L^2(\mathbb R^3_+)}$ by
\begin{eqnarray} \label{energy-change-2-1}
   \|I_1\|_{L^2(\mathbb R^3_+)}
   &\leq&
   C\|e^{-\frac{t}{2}\mathbb{A}}u(\frac{t}2)\|_{L^2(\mathbb R^3_+)}
   \nonumber\\
   &\leq&
   Ct^{-\frac{3}{2}(\frac{1}{a}-\frac{1}{2})}\|u(\frac{t}2)\|_{L^a(\mathbb R^3_+)}
   \nonumber\\
   &\leq&
   Ct^{-\frac{3}{2}(\frac{1}{a}-\frac{1}{2})}t^{\frac{1}{2}-\frac{3}{2}
   (1-\frac{1}{a})}
   \nonumber\\
   &\leq& C(1+t)^{-\frac14}.
\end{eqnarray}

Putting (\ref{energy-change}), (\ref{I2}),
(\ref{nonlinear term-2}), (\ref{energy-change-2-1}), together with
(\ref{apriori})  yields that
\begin{eqnarray}\label{diff_ineq}
    \label{g}
    \frac{dg}{dt}+\rho g\leq C\rho\big[(1+t)^{-\frac12}
    +\rho^{\frac52}\big(\int_{\frac{t}{2}}^t\|u(s)\|_{L^2(\mathbb R^3_+)}^2{\rm
    d}s\big)^2+(1+t)^{-\frac32}\big],
\end{eqnarray}
where
$$g(t)=\|u(t)\|_{L^2(\mathbb R^3_+)}^2+\|\nabla d(t)\|_{L^2(\mathbb R^3_+)}^2.$$
For $k$ sufficiently large, let
$\rho=\frac{k}{1+t}$  and multiply \eqref{diff_ineq} by $(1+t)^k$.
Then we have
\begin{eqnarray}\label{3.47}
   \nonumber
   \big((1+t)^kg\big)^\prime&\leq& C(1+t)^{-1+k}\big((1+t)^{-\frac12}+(1+t)^{-\frac32}\big)
   \\&\leq&
   C(1+t)^{-\frac32+k}.
\end{eqnarray}
This, after integrating over $[1,t]$ and applying Lemma \ref{le:B},
implies that
\begin{equation}
\label{ul2decay1}\big(\|u(t)\|_{L^2(\mathbb R^3_+)}+\|\nabla d(t)\|_{L^2(\mathbb R^3_+)}\big)\leq C(1+t)^{-\frac14}.\end{equation} Inserting
(\ref{ul2decay1}) into (\ref{3.49}) first  and then (\ref{u-La}), we obtain that
\begin{equation}
    \|u(t)\|_{L^a(\mathbb R^3_+)}\leq Ct^{-\frac32(1-\frac{1}{a})}.
\end{equation}
This can be used to improve estimate of $I_1$ to
$$\|I_1(t)\|_{L^2(\mathbb R^3_+)}\leq C(1+t)^{-\frac34},$$
which can then be used to improve (\ref{3.47}) to
\begin{eqnarray}\label{uLr}
   \big((1+t)^kg\big)^\prime&\leq&
   C(1+t)^{-1+k}(1+t)^{-\frac32}.
\end{eqnarray}
Thus we obtain
\begin{eqnarray} \label{3/8}
   \|u(t)\|_{L^2(\mathbb R^3_+)}\leq C(1+t)^{-\frac34}
\end{eqnarray}
so that (\ref{uL2}) holds.

To show (\ref{nabla dL2}), first observe that by (\ref{u})$_2$,
(\ref{apriori}), (\ref{3/8}) and Lemma \ref{le:1}, we have that
for small $\delta>0$,
\begin{eqnarray} \label{d-w0-Lb}
    \nonumber
    &&\|(d-e_3)(t)\|_{L^{\frac{6+\delta}{5}}(\mathbb R^3_+)}
    \\&\leq&\nonumber
    C_{0,\frac{6+\delta}{5},1,3}t^{-\frac32\left(1-\frac{5}{6+\delta}\right)}\|d_0-e_3\|_{L^1(\mathbb R^3_+)}
    \\&+&\nonumber
    \mathcal{C}\int_0^t(t-s)^{-\frac32(\frac56-\frac{5}{6+\delta})}
    \big(\|u\cdot\nabla d\|_{L^{\frac65}(\mathbb R^3_+)}+\||\nabla d|^2\|_{L^{\frac65}(\mathbb R^3_+)}\big){\rm d}s
    \\&\leq& \nonumber
    \mathcal{C}t^{-\frac32\left(1-\frac{5}{6+\delta}\right)}\nonumber\\
    &+&\mathcal{C}\int_0^t(t-s)^{-\frac32(\frac56-\frac{5}{6+\delta})}\big(\|u\|_{L^3(\mathbb R^3_+)}\|\nabla d\|_{L^2(\mathbb R^3_+)}+\|\nabla d\|_{L^3(\mathbb R^3_+)}\|\nabla d\|_{L^2(\mathbb R^3_+)}\big){\rm d}s\nonumber
    \\&\leq& \nonumber
    \mathcal{C}t^{-\frac32(1-\frac{5}{6+\delta})}+\mathcal{C}\widetilde{C}\varepsilon_0\int_0^t(t-s)^{-\frac32(\frac56-\frac{5}{6+\delta})}(1+s)^{-1}{\rm d}s
    \\&\leq&
    \mathcal{C}t^{-\frac32(\frac56-\frac{5}{6+\delta})},
\end{eqnarray}
provided $\varepsilon_0\leq \widetilde{C}^{-1}$.
From here to the end of this section, $C$ denotes a positive constant depending on $\widetilde{C}$,
and $\mathcal{C}$ denotes a positive constant
that is independent of $\widetilde{C}$.

It follows from (\ref{u})$_2$ that
\begin{eqnarray}
     \nabla d(t)=\nabla e^{\frac{t}{2}\Delta}(d-w_0)(\frac{t}2)-\int_{\frac{t}{2}}^t\nabla e^{(t-s)\Delta}\left(u(s)\cdot\nabla d(s)-|\nabla d(s)|^2d(s)\right){\rm d}s.
\end{eqnarray}
From Lemma \ref{le:1} and (\ref{d-w0-Lb}), we obtain that for any
small $\delta>0$,
\begin{eqnarray}\label{3.59}
     \nonumber
     \|\nabla d(t)\|_{L^2(\mathbb R^3_+)}&\leq& C_{1,2,\frac{6+\delta}5,3}t^{-\frac12-\frac32(\frac{5}{6+\delta}-\frac{1}{2})}\|(d-w_0)(\frac{t}2)
     \|_{L^{\frac{6+\delta}{5}}(\mathbb R^3_+)}
     \\&+&\nonumber
     C\int_{\frac{t}{2}}^t(t-s)^{-\frac12-\frac32(\frac{1}{q}-\frac12)}\big(\|(u\cdot\nabla d)(s)\|_{L^q(\mathbb R^3_+)}
     +\||\nabla d(s)|^2\|_{L^q(\mathbb R^3_+)}\big){\rm d}s
     \\&\leq&
     \mathcal{C}t^{-1}+C\int_{\frac{t}{2}}^t(t-s)^{\frac14-\frac{3}{2q}}\big(\|(u\cdot\nabla d)(s)\|_{L^q(\mathbb R^3_+)}
     +\|\nabla d(s)\|_{L^{2q}(\mathbb R^3_+)}^2\big){\rm d}s,
\end{eqnarray}
where we choose $q\in (\frac65,2)$ so that $\frac14-\frac{3}{2q}>-1$ and
$\displaystyle\lim_{q\rightarrow \frac65}(\frac14-\frac{3}{2q})= -1$.

It is readily seen that if we choose the constant $\widetilde{C}=2\mathcal{C}$ in (\ref{apriori}),
then it follows from \eqref{apriori} and \eqref{3.59} that
\begin{eqnarray} \label{3.12}
    \nonumber
    \|\nabla d(t)\|_{L^2(\mathbb R^3_+)}&\leq& \widetilde{C}(1+t)^{-1}\\
     &+&C\int_{\frac{t}{2}}^t(t-s)^{\frac14-\frac{3}{2q}}\big(\|u(s)\|_{L^2(\mathbb R^3_+)}\|\nabla d(s)\|_{L^{\frac{2q}{2-q}}(\mathbb R^3_+)}+\|\nabla d(s)\|_{L^{2q}(\mathbb R^3_+)}^2\big){\rm d}s.
\end{eqnarray}
Now we need to recall several interpolation inequalities and Sobolev's inequalities:
\begin{eqnarray}
    \label{3.13}\|\nabla d(s)\|_{L^{\frac{2q}{2-q}}(\mathbb R^3_+)}
    &\leq& C\|\nabla d(s)\|_{L^2(\mathbb R^3_+)}^{\frac{3}{q}-2}\|\nabla d(s)\|_{L^6(\mathbb R^3_+)}^{3-\frac{3}{q}}\nonumber\\
    &\leq& C\|\nabla d(s)\|_{L^2(\mathbb R^3_+)}^{\frac{3}{q}-2}\|\nabla^2 d(s)\|_{L^2(\mathbb R^3_+)}^{3-\frac{3}{q}},\\
    \label{3.14}\|\nabla d(s)\|_{L^{2q}(\mathbb R^3_+)}&\leq& C\|\nabla d(s)\|_{L^2(\mathbb R^3_+)}^{\frac{3-q}{2q}}
    \|\nabla^2 d(s)\|_{L^2(\mathbb R^3_+)}^{\frac{3q-3}{2q}},\\
    \label{3.15}
    \|\nabla d(s)\|_{L^{\frac{2q}{2-q}}(\mathbb R^3_+)}&\leq& C\|\nabla d(s)\|_{L^3(\mathbb R^3_+)}^{\frac{6}{q}-4}
    \|\nabla^2d(s)\|_{L^2(\mathbb R^3_+)}^{5-\frac{6}{q}},\\
    \label{3.16}
    \|\nabla d(s)\|_{L^{2q}(\mathbb R^3_+)}&\leq& C\|\nabla d(s)\|_{L^2(\mathbb R^3_+)}^{\frac{3}{q}-2}\|\nabla d(s)\|_{L^3(\mathbb R^3_+)}^{3-\frac{3}{q}},
\end{eqnarray}
where $q\in (\frac65,\frac32)$.

Applying \eqref{3.13}, \eqref{3.14}, \eqref{3.15}, \eqref{3.16}, \eqref{apriori}£¬ \eqref{3/8} and Lemma \ref{le:3.7}, we can obtain
\begin{eqnarray}\label{3.65}
    &&\|u(s)\|_{L^2(\mathbb R^3_+)}\|\nabla d(s)\|_{L^{\frac{2q}{2-q}}(\mathbb R^3_+)}\nonumber\\
    &&=\|u(s)\|_{L^2(\mathbb R^3_+)}\|\nabla d(s)\|_{L^{\frac{2q}{2-q}}(\mathbb R^3_+)}^{\frac56}
    \|\nabla d(s)\|_{L^{\frac{2q}{2-q}}(\mathbb R^3_+)}^{\frac16}
    \nonumber\\
    &&\leq\nonumber
    C\|u(s)\|_{L^2(\mathbb R^3_+)}\|\nabla d(s)\|_{L^2(\mathbb R^3_+)}^{\frac{5}{2q}-\frac53}\|\nabla^2 d(s)\|_{L^2(\mathbb R^3_+)}^{\frac{10}{3}-\frac{7}{2q}}
    \|\nabla d(s)\|_{L^3(\mathbb R^3_+)}^{\frac{1}{q}-\frac23}
    \\
    &&\leq
    C\varepsilon_0^{\frac{1}{q}-\frac23}(1+s)^{\frac{11}{12}-\frac{5}{2q}},
\end{eqnarray}
and
\begin{eqnarray}\label{3.66}
    \nonumber
    \|\nabla d(s)\|_{L^{2q}(\mathbb R^3_+)}^2
    &=&\|\nabla d(s)\|_{L^{2q}(\mathbb R^3_+)}^{\frac32}\|\nabla d(s)\|_{L^{2q}(\mathbb R^3_+)}^{\frac12}
    \\&\leq&\nonumber
    C\|\nabla d(s)\|_{L^2(\mathbb R^3_+)}^{\frac{15}{4q}-\frac74}
    \|\nabla^2 d(s)\|_{L^2(\mathbb R^3_+)}^{\frac{9q-9}{4q}}
    \|\nabla d(s)\|_{L^3(\mathbb R^3_+)}^{\frac32-\frac{3}{2q}}
    \\&\leq&
    C\varepsilon_0^{\frac32-\frac3{2q}}(1+s)^{\frac{7}{4}-\frac{15}{4q}} {\rm{d}}s
\end{eqnarray}
for $s\in[\frac{t}2,t]$.

Substituting \eqref{3.65} and \eqref{3.66} into \eqref{3.12} and applying Lemma 2.2 yields that
\begin{eqnarray}
    \nonumber\label{d-1}
    &&\|\nabla d(t)\|_{L^2(\mathbb R^3_+)}
     \\&&\leq\nonumber
     \widetilde{C}(1+t)^{-1}
     +C\varepsilon_0^{\frac{1}{q}-\frac23}\int_{\frac{t}{2}}^t(t-s)^{\frac14-\frac{3}{2q}}(1+s)^{\frac{11}{12}-\frac{5}{2q}}
    {\rm d}s
    \\
    &&\quad+ C\varepsilon_0^{\frac32-\frac3{2q}}\int_{\frac{t}{2}}^t(t-s)^{\frac14-\frac{3}{2q}}(1+s)^{\frac{7}{4}-\frac{15}{4q}} {\rm{d}}s
    \nonumber\\
    &&\leq\nonumber
   \widetilde{C}(1+t)^{-1}+C\varepsilon_0^{\frac1q-\frac23}(1+t)^{\frac{13}6-\frac4q}+C\varepsilon_0^{\frac32-\frac3{2q}}
   (1+t)^{3-\frac{21}{4q}}
    \\
    &&\leq\frac{3\widetilde{C}}{2}(1+t)^{-1}
\end{eqnarray}
provided that we choose $q\in (\frac65, \frac{24}{19})$
and $\varepsilon_0$ so small that
$$C\varepsilon_0^{\frac1q-\frac23}+ C\varepsilon_0^{\frac32-\frac3{2q}}\le \frac{\widetilde{C}}2.$$
This implies \eqref{nabla dL2} and completes the proof.
\end{proof}

By the standard continuity argument, we can then complete the proof of  Lemma 3.5.

\begin{corollary} \label{cor-1} Under the same assumptions of Theorem 1.1, if
$(u,d)$ is the global strong solution obtained by Theorem 1.1, then
\begin{eqnarray}
    \|u(t)\|_{L^2(\mathbb R^3_+)}\leq C(1+t)^{-\frac34},
    \ \ \|\nabla d(t)\|_{L^2(\mathbb R^3_+)}\leq C(1+t)^{-1}, \ \forall \ t>0.
\end{eqnarray}
\end{corollary}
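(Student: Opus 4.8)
The plan is to derive the corollary from Lemma~\ref{le:3.2} by the standard continuity (bootstrap) argument, the one missing ingredient being the time–continuity of the solution, which is furnished by Theorem~\ref{th:E_0}. Set $h(t):=(1+t)\,\|\nabla d(t)\|_{L^2(\mathbb R^3_+)}$. In this notation Lemma~\ref{le:3.2} asserts exactly that the a priori bound $h\le 2\widetilde C$ on an interval $[0,T]$ self-improves to $h\le \tfrac32\widetilde C$ on the same interval, and simultaneously yields the companion decay \eqref{uL2} for $u$. The strict gap $\tfrac32\widetilde C<2\widetilde C$ is precisely the slack the continuity method exploits.

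First I would note that $t\mapsto\|\nabla d(t)\|_{L^2(\mathbb R^3_+)}$ is continuous on $[0,\infty)$, since $\nabla d\in C([0,\infty),L^2(\mathbb R^3_+))$ by Theorem~\ref{th:E_0}; hence $h$ is continuous. To open the argument, I would use the energy inequality of Lemma~\ref{le:B}, which gives $\|\nabla d(t)\|_{L^2(\mathbb R^3_+)}\le\|\nabla d_0\|_{L^2(\mathbb R^3_+)}$ for all $t$, so that $h(t)\le 2\|\nabla d_0\|_{L^2(\mathbb R^3_+)}$ on $[0,1]$. Choosing the constant $\widetilde C$ in \eqref{apriori} at least as large as $\|\nabla d_0\|_{L^2(\mathbb R^3_+)}$ (which is harmless: in the proof of Lemma~\ref{le:3.2} one may replace $\widetilde C=2\mathcal C$ by any larger constant and then shrink $\varepsilon_0$ accordingly), the hypothesis \eqref{apriori} holds on $[0,1]$, so the set $\mathcal I=\{\,T\ge 0:\ h\le 2\widetilde C\ \text{on}\ [0,T]\,\}$ is a nonempty interval.

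Let $T^\ast=\sup\mathcal I$. The crux is to show $T^\ast=\infty$. Suppose instead $T^\ast<\infty$. By continuity of $h$, \eqref{apriori} holds on $[0,T^\ast]$, so Lemma~\ref{le:3.2} applies and upgrades it to $h(t)\le\tfrac32\widetilde C$ on $[0,T^\ast]$; in particular $h(T^\ast)\le\tfrac32\widetilde C<2\widetilde C$. Continuity of $h$ then produces $\epsilon>0$ with $h\le 2\widetilde C$ on $[0,T^\ast+\epsilon]$, contradicting the maximality of $T^\ast$. Hence \eqref{apriori} holds for every $t>0$, and a final application of Lemma~\ref{le:3.2} gives \eqref{nabla dL2} and \eqref{uL2} globally, i.e. $\|\nabla d(t)\|_{L^2(\mathbb R^3_+)}\le C(1+t)^{-1}$ and $\|u(t)\|_{L^2(\mathbb R^3_+)}\le C(1+t)^{-3/4}$ for all $t>0$, as claimed. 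I expect the only delicate point to be this opening step: one must ensure the a priori constant can be taken large enough to dominate the initial datum $\|\nabla d_0\|_{L^2(\mathbb R^3_+)}$ (controlled only in $L^3$ by hypothesis) while remaining compatible with the self-improving inequality of Lemma~\ref{le:3.2}; once $[0,1]$ is absorbed via Lemma~\ref{le:B}, the large-time decay is driven entirely by the contraction $\tfrac32\widetilde C<2\widetilde C$.
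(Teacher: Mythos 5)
Your proposal is correct and is essentially the paper's own argument: the paper disposes of this corollary with the single line ``by the standard continuity argument'' applied to Lemma \ref{le:3.2}, and you have simply spelled that bootstrap out, including the initialization of the a priori bound \eqref{apriori} on $[0,1]$ via the energy inequality of Lemma \ref{le:B} and the continuity of $t\mapsto\|\nabla d(t)\|_{L^2(\mathbb R^3_+)}$. Your observation that $\widetilde C$ must be taken at least of size $\|\nabla d_0\|_{L^2(\mathbb R^3_+)}$ (with $\varepsilon_0$ shrunk accordingly) is a legitimate refinement of a point the paper leaves implicit.
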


With some further calculations, we also have
\begin{corollary} \label{cor-2} Under the same assumptions of Theorem 1.1, if
$(u,d)$ is the global strong solution obtained by Theorem 1.1, then  it holds
that
\begin{eqnarray}
    &&\label{d-w}\|(d-e_3)(t)\|_{L^r(\mathbb R^3_+)}\leq
    Ct^{-\frac32(1-\frac{1}{r})},\ \forall t>0,\ \forall r\in (1, 2], \\
    && \|\nabla d(t)\|_{L^2(\mathbb R^3_+)}\leq C(1+t)^{-\frac54}, \ \forall t>0.
\end{eqnarray}
\end{corollary}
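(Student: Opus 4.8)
The plan is to establish the two estimates in turn, using the Duhamel representations \eqref{u} and \eqref{u-halft} together with the rates $\|u(t)\|_{L^2(\mathbb R^3_+)}\le C(1+t)^{-\frac34}$ and $\|\nabla d(t)\|_{L^2(\mathbb R^3_+)}\le C(1+t)^{-1}$ already secured in Corollary \ref{cor-1}. For the $L^r$-decay of $d-e_3$ I would start from \eqref{u}$_2$ (with $w_0=e_3$, which is fixed by $e^{t\Delta}$ since it is constant). For the linear part, Lemma \ref{le:1} with $k=0$, $q=1$, $p=r\in(1,2]$ gives $\|e^{t\Delta}(d_0-e_3)\|_{L^r(\mathbb R^3_+)}\le Ct^{-\frac32(1-\frac1r)}\|d_0-e_3\|_{L^1(\mathbb R^3_+)}$. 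The key observation for the Duhamel integral is that both source terms lie in $L^1$ with fast decay: by H\"older and Corollary \ref{cor-1}, $\|u\cdot\nabla d\|_{L^1(\mathbb R^3_+)}\le\|u\|_{L^2(\mathbb R^3_+)}\|\nabla d\|_{L^2(\mathbb R^3_+)}\le C(1+s)^{-\frac74}$ and $\||\nabla d|^2d\|_{L^1(\mathbb R^3_+)}=\|\nabla d\|_{L^2(\mathbb R^3_+)}^2\le C(1+s)^{-2}$ (using $|d|=1$). Applying $\|e^{(t-s)\Delta}f\|_{L^r(\mathbb R^3_+)}\le C(t-s)^{-\frac32(1-\frac1r)}\|f\|_{L^1(\mathbb R^3_+)}$ and then Lemma \ref{key-estimate} with $\alpha_1=\frac32(1-\frac1r)\in(0,1)$ and $\alpha_3\in\{\frac74,2\}>1$ yields a bound $Ct^{-\frac32(1-\frac1r)}$, which matches the linear part and gives \eqref{d-w}.

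For the improved $L^2$-decay of $\nabla d$ I would apply $\nabla$ to \eqref{u-halft}$_2$. The linear term is clean: Lemma \ref{le:1} ($k=1$, $p=2$, $q=r$) combined with the estimate \eqref{d-w} just proved gives $\|\nabla e^{\frac t2\Delta}(d-e_3)(\frac t2)\|_{L^2(\mathbb R^3_+)}\le Ct^{-\frac12-\frac32(\frac1r-\frac12)}\|(d-e_3)(\frac t2)\|_{L^r(\mathbb R^3_+)}\le Ct^{-\frac54}$, since the combined exponent $-\frac12-\frac32(\frac1r-\frac12)-\frac32(1-\frac1r)$ collapses to $-\frac54$ independently of $r$. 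For the nonlinear integral I would estimate $\|\nabla e^{(t-s)\Delta}f\|_{L^2(\mathbb R^3_+)}\le C(t-s)^{\frac14-\frac3{2q}}\|f\|_{L^q(\mathbb R^3_+)}$ with $q>\frac65$ (so that the kernel stays integrable), bound $\|u\cdot\nabla d\|_{L^q(\mathbb R^3_+)}$ and $\||\nabla d|^2\|_{L^q(\mathbb R^3_+)}$ via the interpolation inequalities \eqref{3.13}--\eqref{3.16}, the boundedness of $\|\nabla^2d\|_{L^2(\mathbb R^3_+)}$ from Lemma \ref{le:3.7}, and the rates of Corollary \ref{cor-1}, and then invoke Lemma \ref{key-estimate} on $[\frac t2,t]$.

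The hard part is the convective term $u\cdot\nabla d$, which is genuinely borderline. Running the interpolation \eqref{3.13} with only the $t^{-1}$ decay of $\nabla d$ produces a contribution of size $t^{\frac52-\frac9{2q}}$, whose exponent tends to $-\frac54$ only as $q\downarrow\frac65$, precisely the value at which the kernel $(t-s)^{\frac14-\frac3{2q}}$ degenerates to the non-integrable $(t-s)^{-1}$. Consequently a single pass cannot reach the sharp rate; it only yields $\|\nabla d(t)\|_{L^2(\mathbb R^3_+)}\le C(1+t)^{-\frac54+\epsilon}$ for every $\epsilon>0$.

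I would therefore close the argument by iteration, exactly as flagged in the introduction. Feeding the intermediate rate $t^{-\frac54+\epsilon}$ back into the same estimate raises the effective decay of $\|\nabla d(s)\|_{L^2(\mathbb R^3_+)}$ strictly above $t^{-1}$; tracing through the interpolation shows that with input rate $\kappa$ the convective contribution carries the exponent $\tfrac12-\tfrac{3}{2q}-\tfrac{3\kappa}{q}+2\kappa$, so that any $\kappa>1$ admits a fixed $q>\frac65$ for which this is $\le-\frac54$. Since the $|\nabla d|^2$ term already decays faster than $t^{-\frac54}$ for such $q$, and the linear term contributes exactly $t^{-\frac54}$, the bootstrap saturates at the claimed rate $\|\nabla d(t)\|_{L^2(\mathbb R^3_+)}\le C(1+t)^{-\frac54}$. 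The main obstacle is thus not any single computation but the criticality of the convective term at $q=\frac65$, which forces the two-step improvement $t^{-1}\to t^{-\frac54+\epsilon}\to t^{-\frac54}$ rather than a direct estimate.
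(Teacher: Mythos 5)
Your proposal is correct and follows essentially the same route as the paper: Duhamel's formula with the $L^1\to L^r$ heat semigroup bound and the rates from Corollary \ref{cor-1} for the first estimate, and for the second the kernel $(t-s)^{\frac14-\frac3{2q}}$ with the interpolation inequalities \eqref{3.13}--\eqref{3.14}, the observation that $q\downarrow\frac65$ is borderline, and the two-step bootstrap $t^{-1}\to t^{-\frac54+\epsilon}\to t^{-\frac54}$, exactly as in \eqref{nable-d-L2}--\eqref{3.74}. Your explicit exponent bookkeeping for the iteration with input rate $\kappa$ is a slightly more quantitative version of the paper's ``running the same argument'' step, but it is the same idea.
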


\begin{proof} By  (\ref{u})$_2$ and Corollary \ref{cor-1}, we have that for any $r\in (1, 2]$, it holds
\begin{eqnarray} \label{d-w0-Lr}
    \nonumber
    &&\|(d-e_3)(t)\|_{L^r(\mathbb R^3_+)}\\
    &\leq& Ct^{-\frac32(1-\frac{1}{r})}\|d_0-e_3\|_{L^1(\mathbb R^3_+)}\nonumber
    \\&+&\nonumber
    C\int_0^t(t-s)^{-\frac32(1-\frac{1}{r})}\big(\|u(s)\|_{L^2(\mathbb R^3_+)}\|\nabla d(s)\|_{L^2(\mathbb R^3_+)}
    +\|\nabla d(s)\|_{L^2(\mathbb R^3_+)}^2\big){\rm d}s
    \\&\leq& \nonumber
    Ct^{-\frac32(1-\frac{1}{r})}+C\int_0^t(t-s)^{-\frac32(1-\frac{1}{r})}\big[(1+s)^{-\frac74}+(1+s)^{-2}\big]{\rm d}s
    \\&\leq&
    Ct^{-\frac32(1-\frac{1}{r})}.
\end{eqnarray}
By (\ref{u-halft})$_2$, Lemma \ref{le:1}, (\ref{d-w0-Lr}), (\ref{3.13}), and (\ref{3.14}), we then have
\begin{eqnarray} \label{nable-d-L2}
    \nonumber
    &&\|\nabla d(t)\|_{L^2(\mathbb R^3_+)}\leq Ct^{-\frac12}\|(d-e_3)(\frac{t}2)\|_{L^2(\mathbb R^3_+)}
    \\
    &&\nonumber
    \ \ +\int_{\frac{t}{2}}^t(t-s)^{\frac14-\frac{3}{2q}}\big(\|u(s)\|_{L^2(\mathbb R^3_+)}\|\nabla d(s)\|_{L^2(\mathbb R^3_+)}^{\frac{3}{q}-2}+\|\nabla d(s)\|_{L^2(\mathbb R^3_+)}^{\frac{3-q}{q}}\big)\|\nabla d(s)\|_{H^1(\mathbb R^3_+)}^{\frac{3q-3}{q}}{\rm d}s
    \\
    &&\leq\nonumber
    C(1+t)^{-\frac54}
    +\int_{\frac{t}{2}}^t(t-s)^{\frac14-\frac{3}{2q}}\big[(1+s)^{\frac54-\frac{3}{q}}+(1+s)^{1-\frac{3}{q}}\big]{\rm d}s
    \\
    &&\leq
    C(1+t)^{-\frac54}
    +C(1+t)^{\frac52-\frac{9}{2q}}
\end{eqnarray}
for any $q\in(\frac65,\frac32)$.

Since $\displaystyle \lim_{q\rightarrow\frac65} (\frac52-\frac{9}{2q})= -\frac54$,
it follows from \eqref{nable-d-L2} that  for any $\epsilon>0$, there exists $C_\epsilon>0$ such that
\begin{eqnarray}\label{3.73}
\|\nabla d(t)\|_{L^2(\mathbb R^3_+)}\leq  C_\epsilon(1+t)^{-\frac54+\epsilon}.
\end{eqnarray}
Substituting \eqref{3.73} into \eqref{nable-d-L2} and running the same argument as in \eqref{nable-d-L2}, we would obtain
the sharp estimate
\begin{eqnarray}\label{3.74}
\|\nabla d(t)\|_{L^2(\mathbb R^3_+)}\leq  C(1+t)^{-\frac54}.
\end{eqnarray}
This complets the proof.
\end{proof}

\begin{corollary} \label{le:3} Under the same assumptions of Theorem 1.1, if
$(u,d)$ is the global strong solution obtained by Theorem 1.1, then
the following estimates hold
\begin{eqnarray}
    &&\|u(t)\|_{L^{r}(\mathbb R^3_+)}\leq Ct^{-\frac32(1-\frac{1}{r})},\\
    &&\|\nabla d(t)\|_{L^s(\mathbb R^3_+)}\leq Ct^{-\frac12-\frac32(1-\frac{1}{s})}
\end{eqnarray}
for any $r\in(1,2]$ and $s\in[1,2]$.
\end{corollary}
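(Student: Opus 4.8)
The plan is to derive both families of estimates from the Duhamel representation (\ref{u}), the linear semigroup bounds of Lemma \ref{le:1}, and the time-integral bounds of Lemma \ref{key-estimate}, taking as input the $L^2$-decay rates $\|u(t)\|_{L^2(\mathbb R^3_+)}\lesssim(1+t)^{-\frac34}$ and $\|\nabla d(t)\|_{L^2(\mathbb R^3_+)}\lesssim(1+t)^{-\frac54}$ already supplied by Corollaries \ref{cor-1} and \ref{cor-2}. The structural observation driving the argument is that both target exponents, $-\frac32(1-\frac1r)$ and $-\frac12-\frac32(1-\frac1s)$, are \emph{affine} in $\frac1r$ (resp. $\frac1s$) and coincide with the known endpoint values at $r=2$ (resp. $s=2$). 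This lets me prove each estimate directly for exponents below $\frac32$ and then recover the range $[\frac32,2]$ losslessly by interpolation.

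For $u$ I would start from (\ref{u})$_1$. The linear term is controlled by Lemma \ref{le:1} with $q=1$, giving exactly $Ct^{-\frac32(1-\frac1r)}\|u_0\|_{L^1(\mathbb R^3_+)}$. For the bilinear term I rewrite $\mathbb P(u\cdot\nabla u+\nabla\cdot(\nabla d\odot\nabla d))=\mathbb P\nabla\cdot(u\otimes u-\nabla d\odot\nabla d)$ and repeat the duality/integration-by-parts computation of (\ref{3.49}), which bounds its $L^r$-norm by $C\int_0^t(t-s)^{-\frac12-\frac32(1-\frac1r)}\big(\|u(s)\|_{L^2}^2+\|\nabla d(s)\|_{L^2}^2\big)\,{\rm d}s$. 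Inserting the $L^2$-rates (so the integrand decays like $(1+s)^{-\frac32}$) and invoking Lemma \ref{key-estimate} with $\alpha_3=\frac32>1$ produces the faster bound $Ct^{-\frac12-\frac32(1-\frac1r)}$, valid whenever $\frac12+\frac32(1-\frac1r)<1$, i.e. $r\in(1,\frac32)$. Hence for such $r$ the linear part dominates and $\|u(t)\|_{L^r}\le Ct^{-\frac32(1-\frac1r)}$. For $r\in[\frac32,2]$ I interpolate, $\|u\|_{L^r}\le\|u\|_{L^{r_0}}^{\theta}\|u\|_{L^2}^{1-\theta}$ with $\frac1r=\frac{\theta}{r_0}+\frac{1-\theta}{2}$ and $r_0\in(1,\frac32)$; affinity of the exponent in $\frac1r$ together with the $L^2$ endpoint from Corollary \ref{cor-1} reproduces $Ct^{-\frac32(1-\frac1r)}$ exactly.

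For $\nabla d$ I would differentiate (\ref{u})$_2$ (with $w_0=e_3$, so $\nabla w_0=0$) to obtain $\nabla d(t)=\nabla e^{t\Delta}(d_0-e_3)-\int_0^t\nabla e^{(t-s)\Delta}\big(u\cdot\nabla d-|\nabla d|^2d\big)\,{\rm d}s$. The linear term is handled by the heat-semigroup case of Lemma \ref{le:1}, including the endpoint (\ref{p-q-2}) at $s=1$, yielding $Ct^{-\frac12-\frac32(1-\frac1s)}\|d_0-e_3\|_{L^1}$. For the nonlinear term I take $q=1$ in the semigroup bound and use $\|u\cdot\nabla d-|\nabla d|^2d\|_{L^1}\le\|u\|_{L^2}\|\nabla d\|_{L^2}+\|\nabla d\|_{L^2}^2\lesssim(1+s)^{-2}$; Lemma \ref{key-estimate} with $\alpha_3=2>1$ then gives $Ct^{-\frac12-\frac32(1-\frac1s)}$ for every $s\in[1,\frac32)$, where the kernel singularity $-\frac12-\frac32(1-\frac1s)>-1$ stays integrable. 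The remaining range $s\in[\frac32,2]$ follows once more by interpolating between some $s_0\in[1,\frac32)$ and the sharp $L^2$-rate $(1+t)^{-\frac54}$ of Corollary \ref{cor-2}, again using affinity of $-\frac12-\frac32(1-\frac1s)$ in $\frac1s$.

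The main obstacle is exactly the degeneration of the Duhamel time integral as the exponent crosses $\frac32$: the kernel $(t-s)^{-\frac12-\frac32(1-\frac1r)}$ fails to be integrable at $s=t$ once $r\ge\frac32$, so no single direct estimate covers the whole range. The resolving device—interpolating the directly-proved sub-critical estimates against the previously established $L^2$ decay, made lossless by the affine dependence of the target exponent on $\frac1r$ (resp. $\frac1s$)—is the one point needing care; the rest is bookkeeping with Lemmas \ref{le:1} and \ref{key-estimate}. A minor technical point is the replacement of $(1+t)^{-a}$ by $t^{-a}$ for small $t$, which is harmless because the relevant norms of $(u,\nabla d)$ remain bounded near $t=0$.
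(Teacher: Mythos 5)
Your proposal is correct and follows essentially the same route as the paper: Duhamel's formula, the $L^p$--$L^q$ semigroup bounds of Lemma \ref{le:1} together with the duality estimate (\ref{3.49}), the $L^2$ decay rates from Corollaries \ref{cor-1} and \ref{cor-2}, and a lossless interpolation (exploiting affinity of the exponent in $\frac1r$) to cover the range where the Duhamel kernel ceases to be integrable. The only cosmetic difference is that for $\nabla d$ the paper proves just the $L^1$ endpoint directly and interpolates over all of $[1,2]$, whereas you prove the estimate directly for $s\in[1,\frac32)$ before interpolating; both are valid.
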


\begin{proof} By (\ref{3.49}), Corollary \ref{cor-1}, and Corollary \ref{cor-2}, we have that
for any $\tilde{r}\in(1,\frac32)$, it holds
\begin{eqnarray}
     \nonumber
     \|u(t)\|_{L^{\tilde{r}}(\mathbb R^3_+)}&\leq& Ct^{-\frac32\left(1-\frac{1}{\tilde{r}}\right)}\|u_0\|_{L^1(\mathbb R^3_+)} +C\int_0^t(t-s)^{-\frac12-\frac32\left(1-\frac{1}{\tilde{r}}\right)}(1+s)^{-\frac32}{\rm d}s
     \\&\leq&
     Ct^{-\frac32\left(1-\frac{1}{\tilde{r}}\right)}.
\end{eqnarray} Then for any $r\in [\frac32,2)$, using the interpolation inequality, we have
\begin{eqnarray}
    \label{u-r-bc}
    \nonumber
    \|u(t)\|_{L^r(\mathbb R^3_+)}&\leq& \|u(t)\|_{L^{\tilde{r}}(\mathbb R^3_+)}^\alpha\|u(t)\|_{L^{2}(\mathbb R^3_+)}^{1-\alpha}\
    \\&\leq&
    Ct^{-\frac32(1-\frac{1}{\tilde{r}})\alpha}t^{-\frac34(1-\alpha)}
    \leq
    Ct^{-\frac32\left(1-\frac{1}{r}\right)},
\end{eqnarray}
where $\alpha\in (0,1)$ satisfies $\frac{\alpha}{\tilde{r}}+\frac{1-\alpha}{2}=\frac{1}{r}$.

While, by (\ref{u})$_2$,  (\ref{p-q-2}), Lemma \ref{key-estimate}, Corollary \ref{cor-1}
and Corollary \ref{cor-2}, we have that
\begin{eqnarray}
    \nonumber\label{nabel d-L1}
    \|\nabla d(t)\|_{L^1(\mathbb R^3_+)}&\leq&
    Ct^{-\frac12}\|d_0-e_3\|_{L^1(\mathbb R^3_+)}\\
    &+& C\int_0^t(t-s)^{-\frac12}\big(\|u(s)\|_{L^2(\mathbb R^3_+)}
    \|\nabla d(s)\|_{L^2(\mathbb R^3_+)}+\|\nabla d(s)\|_{L^2(\mathbb R^3_+)}^2\big){\rm d}s\nonumber
    \\&\leq& \nonumber
    Ct^{-\frac12}+C\int_0^t(t-s)^{-\frac12}\left[(1+s)^{-2}+(1+s)^{-\frac52}\right]{\rm d}s
    \\&\leq&
    Ct^{-\frac12}.
\end{eqnarray}
This, combined with \eqref{3.74} and the interpolation inequality, implies that for any $1\le s\le 2$,
\begin{eqnarray}
   \|\nabla d(t)\|_{L^s(\mathbb R^3_+)}\leq \|\nabla d(t)\|_{L^1(\mathbb R^3_+)}^{\frac{2}{s}-1}
   \|\nabla d(t)\|_{L^2(\mathbb R^3_+)}^{2-\frac{2}{s}}\leq Ct^{-\frac12-\frac32(1-\frac{1}{s})}.
\end{eqnarray}
This completes the proof.
\end{proof}

\begin{lemma} \label{le:4} Assume there exist $C_1, C_2>0$ such that
if the global strong solution $(u,d)$ given by Theorem 1.1 satisfies
\begin{eqnarray}
   &&\|\nabla u(t)\|_{L^6(\mathbb R^3_+)}
   \leq 2C_1t^{-\frac74}, \label{nabel-u-assumption}\\
   &&\|\nabla^2 d(t)\|_{L^6(\mathbb R^3_+)}\leq 2C_2t^{-\frac94}
\end{eqnarray} for any $t\in(0,T]$.
Then the following estimates
\begin{eqnarray}
   &&\|u(t)\|_{L^r(\mathbb R^3_+)}\leq Ct^{-\frac32(1-\frac{1}{r})},\label{u-a} \\
   &&\label{d-a}\|(d-e_3)(t)\|_{L^r(\mathbb R^3_+)}
   \leq Ct^{-\frac32(1-\frac{1}{r})},\\
   &&\|\nabla d(t)\|_{L^r(\mathbb R^3_+)}
   \leq Ct^{-\frac12-\frac32(1-\frac{1}{r})},\label{nabel-d-a}\\
   &&\label{c4}\|\nabla u(t)\|_{L^6(\mathbb R^3_+)}
   \leq \frac32C_1t^{-\frac74},\\
   &&\label{c5}\|\nabla^2 d(t)\|_{L^6(\mathbb R^3_+)}\leq \frac32C_2t^{-\frac94}
\end{eqnarray}
hold for any $t\in(0,T]$.
Here $r\in [2,\infty]$.
\end{lemma}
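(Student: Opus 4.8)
The plan is to run the single bootstrap step as stated: the two hypotheses give a priori control of $\nabla u$ and $\nabla^2 d$ in $L^6$ with rates $t^{-7/4}$ and $t^{-9/4}$, and I would first convert these, together with the $L^2$ decay already proved in Corollaries \ref{cor-1} and \ref{cor-2}, into the full family of $L^r$ estimates (\ref{u-a})--(\ref{nabel-d-a}), and then feed those back through Duhamel's formula (\ref{u-halft}) and the smoothing estimates of Lemma \ref{le:1} to recover (\ref{c4}) and (\ref{c5}) with the improved constants $\tfrac32 C_1$ and $\tfrac32 C_2$.

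First I would produce the $L^\infty$ endpoints by Gagliardo--Nirenberg interpolation on $\mathbb R^3_+$ (legitimate after even reflection in $x_3$, since $\partial_3 d=0$ on the boundary, and the analogous reflection for $u$). Concretely $\|u\|_{L^\infty}\lesssim\|u\|_{L^2}^{1/4}\|\nabla u\|_{L^6}^{3/4}\lesssim t^{-3/2}$ and $\|\nabla d\|_{L^\infty}\lesssim\|\nabla d\|_{L^2}^{1/4}\|\nabla^2 d\|_{L^6}^{3/4}\lesssim t^{-2}$; interpolating $\nabla d$ between $L^2$ and $L^\infty$ gives $\|\nabla d\|_{L^6}\lesssim\|\nabla d\|_{L^2}^{1/3}\|\nabla d\|_{L^\infty}^{2/3}\lesssim t^{-7/4}$, whence $\|d-e_3\|_{L^\infty}\lesssim\|d-e_3\|_{L^2}^{1/4}\|\nabla d\|_{L^6}^{3/4}\lesssim t^{-3/2}$. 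Interpolating each quantity between its $L^2$ value (Corollaries \ref{cor-1}, \ref{cor-2}) and the $L^\infty$ value just obtained yields (\ref{u-a}), (\ref{d-a}) and (\ref{nabel-d-a}) for every $r\in[2,\infty]$, the exponents combining exactly to the advertised rates.

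Next, to close the loop I would differentiate Duhamel's formula and apply the $L^p$--$L^6$ estimates of Lemma \ref{le:1}, splitting the integral at $s=t/2$. For $\nabla u$ the linear part $\nabla e^{-\frac t2\mathbb A}u(\tfrac t2)$ is bounded by $Ct^{-1/2-\frac32(1/2-1/6)}\|u(\tfrac t2)\|_{L^2}\lesssim t^{-7/4}$, which fixes the size of $C_1$; the nonlinear part is estimated by pairing the scaling-critical norms $\|(u,\nabla d)\|_{L^3}\le\varepsilon_0$ (uniform by Lemma \ref{le:L3}) with the decaying higher norms from Step one, taking the $L^p$ exponent slightly off the critical value $p=2$ so that the kernel power $(t-s)^{-1/2-\frac{3}{2p_1}}$ stays integrable; Lemma \ref{key-estimate} then shows this contribution decays strictly faster than $t^{-7/4}$ and carries a factor $\lesssim\varepsilon_0$, so for $\varepsilon_0$ small it is absorbed into $\tfrac12 C_1 t^{-7/4}$, giving the bound $\tfrac32 C_1 t^{-7/4}$. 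The estimate of $\nabla^2 d$ is parallel, using $e^{(t-s)\Delta}$ in place of $e^{-(t-s)\mathbb A}$, the linear term $\nabla e^{\frac t2\Delta}\nabla d(\tfrac t2)\lesssim t^{-9/4}$, and the analogous absorption for the nonlinearity.

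The hard part will be the $\nabla^2 d$ estimate, which is exactly the super-critical difficulty flagged in the introduction: pulling two full derivatives onto the heat semigroup produces a non-integrable kernel $(t-s)^{-1-\cdots}$, so one derivative must instead be placed on $u\cdot\nabla d-|\nabla d|^2 d$, forcing control of $\nabla u\,\nabla d$, $u\,\nabla^2 d$, $\nabla d\,\nabla^2 d$ and $|\nabla d|^2\nabla d$ in suitable $L^p$, i.e. of the third-order behaviour of $d$; the $L^\infty$ bounds from Step one together with the hypothesis on $\|\nabla^2 d\|_{L^6}$ are precisely what render these $L^p$ norms finite with the correct rate. Finally, since all the $L^r$ rates blow up as $t\to0^+$, I would anchor the argument in the short-time regime $0<t\lesssim1$, where the stronger parabolic smoothing of the local theory (cf. Lemma \ref{le:higher order}) gives rates better than $t^{-7/4}$ and $t^{-9/4}$, so the improved bounds hold there automatically, and run the Duhamel/absorption argument above for $t\gtrsim1$.
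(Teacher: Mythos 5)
Your proposal is correct and follows essentially the same two-step route as the paper: Gagliardo--Nirenberg interpolation against the hypothesized $L^6$ bounds to get the $L^\infty$ endpoints and hence all $L^r$ rates, then Duhamel on $[t/2,t]$ with one derivative shifted onto the nonlinearity for $\nabla^2 d$, extracting a small factor from the scale-invariant $L^3$ norms to absorb the nonlinear contributions into $\tfrac12 C_1 t^{-7/4}$ and $\tfrac12 C_2 t^{-9/4}$. The only cosmetic difference is in how the $\varepsilon_0$ factor is produced (the paper interpolates $L^6\le L^3{}^{1/3}L^{12}{}^{2/3}$ inside an $L^3$ bound of the nonlinearity with the fixed kernel $(t-s)^{-3/4}$, rather than perturbing the $L^p$ exponent of the kernel), which does not affect the argument.
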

\begin{proof} Without loss of generality, we may assume that $t\geq1$.
We divide the proof into two steps:


\noindent{\it Step I}. By Gagliardo-Nirenberg inequality \footnote{Indeed, to see this, one has only to extend $u$ from $\mathbb{R}^3_+$ to $\mathbb{R}^3$ by odd or even extension and then apply the $\mathbb{R}^3$-version.} and Corollary \ref{le:3}, we know that for any $0<t\leq T$,
\begin{eqnarray}
    \label{u-r-bc}
    \nonumber
    \|u(t)\|_{L^\infty(\mathbb R^3_+)}&\leq& \|u(t)\|_{L^2(\mathbb R^3_+)}^{\frac14}\|\nabla u(t)\|_{L^{6}(\mathbb R^3_+)}^{\frac34}
    \\&\leq&
    Ct^{-\frac34\cdot\frac14-\frac74\cdot\frac34}
    \leq
    Ct^{-\frac32}.
\end{eqnarray}
Then the case for $r\in (2,\infty)$ directly follows from the interpolation inequality.  This yields (\ref{u-a}). Similar arguments also yield first (\ref{nabel-d-a}) and then (\ref{d-a}).

\medskip
\noindent{\it Step II}. We want to show the estimate of $\|\nabla u(t)\|_{L^6(\mathbb R^3_+)}$. To see this, first observe that by
(\ref{u-halft})$_1$, Lemma \ref{p-q},  and the interpolation inequality,
we have
\begin{eqnarray}
    &&\nonumber \label{nabel u-r}
    \|\nabla u(t)\|_{L^6(\mathbb R^3_+)}
    \\&\leq&\nonumber
     C_{1,6,2,3}(1+t)^{-1}\|u(\frac{t}2)\|_{L^2(\mathbb R^3_+)}
     +C\int_{\frac{t}{2}}^t(t-s)^{-\frac34}\|(u\cdot\nabla u-\nabla\cdot(\nabla d\odot\nabla d))(s)\|_{L^3(\mathbb R^3_+)}{\rm d}s
     \\&\leq&
     C_1t^{-\frac74}+\nonumber\\
     &&\ C\int_{\frac{t}{2}}^t(t-s)^{-\frac34}\big(\|u(s)\|_{L^6(\mathbb R^3_+)}\|\nabla u(s)\|_{L^6(\mathbb R^3_+)}
     +\|\nabla d(s)\|_{L^6(\mathbb R^3_+)}
     \|\nabla^2 d(s)\|_{L^6(\mathbb R^3_+)}\big){\rm d}s.
\end{eqnarray}
To bound the second term in the right hand of \eqref{nabel u-r},
we first estimate by using interpolation inequality, Lemma \ref{le:L3}, (\ref{nabel-u-assumption}) and (\ref{u-a}) that
\begin{eqnarray}
     \nonumber \label{K1}
     &&\int_{\frac{t}{2}}^t(t-s)^{-\frac34}\|u(s)\|_{L^6(\mathbb R^3_+)}
     \|\nabla u(s)\|_{L^6(\mathbb R^3_+)}{\rm d}s
     \\&\leq&\nonumber
     C\int_{\frac{t}{2}}^t(t-s)^{-\frac34}\|u(s)\|_{L^3(\mathbb R^3_+)}^{\frac13}
     \|u(s)\|_{L^{12}(\mathbb R^3_+)}^{\frac23}
     \|\nabla u(s)\|_{L^6(\mathbb R^3_+)}{\rm d}s
     \\&\leq&\nonumber
     C\varepsilon_0^{\frac{1}{3}}\int_{\frac{t}{2}}^t(t-s)^{-\frac34}s^{-\frac{11}{12}-\frac74}{\rm d}s
     \\&\leq&
     C\varepsilon_0^{\frac{1}{3}}t^{-\frac74}.
\end{eqnarray}
Similarly, we can estimate
\begin{eqnarray}
     \label{K2}
     \int_{\frac{t}{2}}^t(t-s)^{-\frac12}\|\nabla d(s)\|_{L^4(\mathbb R^3_+)}
     \|\nabla^2 d(s)\|_{L^4(\mathbb R^3_+)}{\rm d}s
     \leq
     C\varepsilon_0^{\frac1{3}}t^{-\frac74}.
\end{eqnarray}
Substituting the estimates \eqref{K1} and \eqref{K2}
into \eqref{nabel u-r} and choosing a sufficiently small $\varepsilon_0$,
we conclude that
\begin{eqnarray}
    \|\nabla u(t)\|_{L^6(\mathbb R^3_+)}\leq \frac32C_1t^{-\frac74}.
\end{eqnarray}

Finally, we want to estimate $\|\nabla^2 d(t)\|_{L^6(\mathbb R^3_+)}$.
By taking $\nabla$ of (\ref{system})$_3$, we have that
\begin{eqnarray}
    (\nabla d)_t-\Delta(\nabla d)=-\nabla(u\cdot\nabla d)+\nabla(|\nabla d|^2d)
\end{eqnarray}
so that by Duhamel's formula, we have
\begin{eqnarray}
    \nabla d(t)=e^{\frac{t}{2}\Delta}\nabla d(\frac{t}2)+\int_{\frac{t}{2}}^te^{(t-s)\Delta}\left(-\nabla(u\cdot\nabla d)+\nabla(|\nabla d|^2d)\right)(s){\rm d}s,
\end{eqnarray}
and
\begin{eqnarray}
    \label{nabel2-d}\nabla^2 d(t)=\nabla e^{\frac{t}{2}\Delta}\nabla d(\frac{t}2)+\int_{\frac{t}{2}}^t\nabla e^{(t-s)\Delta}\left(-\nabla(u\cdot\nabla d)+\nabla(|\nabla d|^2d)\right)(s){\rm d}s.
\end{eqnarray}
This, combined with Lemma \ref{p-q} and a similar argument as (\ref{K1}), yields that
\begin{eqnarray}
    \nonumber
    &&\|\nabla^2 d(t)\|_{L^6(\mathbb R^3_+)}
    \\&\leq&\nonumber
    C_{1,6,2,3}t^{-1}\|\nabla d(t)\|_{L^2(\mathbb R^3_+)}
    +\int_{\frac{t}{2}}^t(t-s)^{-\frac34}\big\|\nabla(u\cdot\nabla d)-\nabla(|\nabla d|^2d)\big\|_{L^{3}(\mathbb R^3_+)}{\rm d}s
    \\&\leq&\nonumber
    C_2t^{-\frac94}+\int_{\frac{t}{2}}^t(t-s)^{-\frac34}\big\||\nabla u||\nabla d|
    +|u||\nabla^2d|+|\nabla d||\nabla^2d|+|\nabla d|^3\big\|_{L^{3}(\mathbb R^3_+)}{\rm d}s
    \\&\leq&
    C_2t^{-\frac94}+K_1+K_2+K_3+K_4,
\end{eqnarray} where
\begin{eqnarray*}
    K_1&=&\int_{\frac{t}{2}}^t(t-s)^{-\frac34}\big\||\nabla u||\nabla d|\big\|_{L^{3}(\mathbb R^3_+)}{\rm d}s
    \\
    &\leq&\nonumber
    C\int_{\frac{t}{2}}^t(t-s)^{-\frac34}\|\nabla u(s)\|_{L^6(\mathbb R^3_+)}
    \|\nabla d(s)\|_{L^3(\mathbb R^3_+)}^{\frac13}
    \|\nabla d(s)\|_{L^{12}(\mathbb R^3_+)}^{\frac23}{\rm d}s
    \\&\leq&
    C\varepsilon_0^{\frac13}t^{-\frac94},
    \nonumber\\
    \\\nonumber
    K_2&=&\int_{\frac{t}{2}}^t(t-s)^{-\frac34}\big\||u||\nabla^2 d|\big\|
    _{L^{3}(\mathbb R^3_+)}{\rm d}s\\
    &\leq&\nonumber
    C\int_{\frac{t}{2}}^t(t-s)^{-\frac34}\|u(s)\|_{L^3(\mathbb R^3_+)}^{\frac13}
    \|u(s)\|_{L^{12}(\mathbb R^3_+)}^{\frac23}
    \|\nabla^2 d(s)\|_{L^6(\mathbb R^3_+)}{\rm d}s
    \\&\leq&
    C\varepsilon_0^{\frac13}t^{-\frac94},\nonumber\\
    \\ \nonumber
    K_3&=&\int_{\frac{t}{2}}^t(t-s)^{-\frac34}\big\||\nabla d||\nabla^2 d|\big\|_{L^{3}(\mathbb R^3_+)}{\rm d}s
    \\
    &\leq&\nonumber
    C\int_{\frac{t}{2}}^t(t-s)^{-\frac34}\|\nabla d(s)\|_{L^3(\mathbb R^3_+)}^{\frac13}\|\nabla d(s)\|_{L^{12}(\mathbb R^3_+)}^{\frac23}
    \|\nabla^2 d(s)\|_{L^6(\mathbb R^3_+)}{\rm d}s
    \\&\leq&
    C\varepsilon_0^{\frac13}t^{-\frac94},
    \nonumber\\
    \\ \nonumber
    K_4&=&\int_{\frac{t}{2}}^t(t-s)^{-\frac34}
    \big\|\nabla d(s)\big\|_{L^9(\mathbb R^3_+)}^3{\rm d}s
    \\&\leq&\nonumber
    \int_{\frac{t}{2}}^t(t-s)^{-\frac34}\|\nabla d(s)\|_{L^3(\mathbb R^3_+)}^{\frac13}\|\nabla d(s)\|_{L^{12}(\mathbb R^3_+)}^{\frac83}{\rm d}s
    \\&\leq&
    C\varepsilon_0^{\frac13}t^{-\frac94}.
\end{eqnarray*}
Thus (\ref{c5}) follows by choosing $\varepsilon_0$ sufficiently small.
This completes the proof. \end{proof}

Now we can apply the continuity argument to show the following Corollary.

\begin{corollary} \label{cor-4}
 Under the same assumptions of Theorem 1.1, if
$(u,d)$ is the global strong solution obtained by Theorem 1.1, then
the following estimates
\begin{eqnarray}
   &&\|u(t)\|_{L^{r}(\mathbb R^3_+)}
   \leq Ct^{-\frac32(1-\frac{1}{r})},\label{u-a-2}, \\
   &&\label{d-a-2}\|(d-e_3)(t)\|_{L^r(\mathbb R^3_+)}
   \leq Ct^{-\frac32(1-\frac{1}{r})},\\
   &&\|\nabla d(t)\|_{L^r(\mathbb R^3_+)}
   \leq Ct^{-\frac12-\frac32(1-\frac{1}{r})},\label{nabel-d-a-2}\\
   &&\label{c4-2}\|\nabla u(t)\|_{L^6(\mathbb R^3_+)}\leq Ct^{-\frac74},\\
   &&\label{c5-2}\|\nabla^2 d(t)\|_{L^6(\mathbb R^3_+)}\leq Ct^{-\frac94},
\end{eqnarray}
hold for any $t>0$, where $r\in [2,\infty]$.
\end{corollary}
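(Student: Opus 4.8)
The plan is to prove the two \emph{profile bounds}
\[
\|\nabla u(t)\|_{L^6(\mathbb R^3_+)}\leq 2C_1 t^{-\frac74},
\qquad
\|\nabla^2 d(t)\|_{L^6(\mathbb R^3_+)}\leq 2C_2 t^{-\frac94}
\]
for every $t>0$ by a continuity argument, and then to read off \eqref{u-a-2}--\eqref{c5-2} directly from Lemma \ref{le:4}. The solution is already global by Theorem \ref{th:E_0}, so the argument concerns only the decay rates, not existence. Here $C_1,C_2$ are the fixed constants produced by the linear semigroup terms in the proof of Lemma \ref{le:4}: $C_1$ is fixed by requiring $C_{1,6,2,3}(1+t)^{-1}\|u(\tfrac t2)\|_{L^2(\mathbb R^3_+)}\leq C_1 t^{-\frac74}$, which holds by Corollary \ref{cor-1}, and $C_2$ by the analogous bound on $C_{1,6,2,3}t^{-1}\|\nabla d(\tfrac t2)\|_{L^2(\mathbb R^3_+)}$ via Corollary \ref{cor-2}.

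I would set
\[
T^{*}=\sup\Big\{T>0:\ \|\nabla u(t)\|_{L^6(\mathbb R^3_+)}\leq 2C_1 t^{-\frac74}\ \text{and}\ \|\nabla^2 d(t)\|_{L^6(\mathbb R^3_+)}\leq 2C_2 t^{-\frac94}\ \text{for all}\ t\in(0,T]\Big\}
\]
and first verify $T^{*}>0$. Since $u_0\in L^3_\sigma(\mathbb R^3_+)$ and $\nabla d_0\in L^3(\mathbb R^3_+)$, the $L^3$--$L^6$ smoothing of Lemma \ref{le:1} applied to \eqref{u} bounds both $\|\nabla u(t)\|_{L^6(\mathbb R^3_+)}$ and $\|\nabla^2 d(t)\|_{L^6(\mathbb R^3_+)}$ by a constant times $t^{-\frac34}$ for small $t>0$. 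Because $t^{-\frac34}=t\,t^{-\frac74}$ and $t^{-\frac34}=t^{\frac32}\,t^{-\frac94}$, each profile is strictly dominated by its target as $t\to 0^{+}$, so the admissible set contains an interval $(0,t_0]$ and $T^{*}>0$.

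The argument then closes in the standard way. Closedness of the admissible set follows from the time-continuity of $t\mapsto\|\nabla u(t)\|_{L^6(\mathbb R^3_+)}$ and $t\mapsto\|\nabla^2 d(t)\|_{L^6(\mathbb R^3_+)}$, guaranteed by the regularity class in Theorem \ref{th:E_0}. Openness exploits the gap between the hypothesis (factor $2$) and the conclusion (factor $\tfrac32$) of Lemma \ref{le:4}: assuming the profile bounds with factor $2$ on $(0,T^{*}]$, Lemma \ref{le:4} upgrades them, for $\varepsilon_0$ small, to $\|\nabla u(t)\|_{L^6(\mathbb R^3_+)}\leq\tfrac32 C_1 t^{-\frac74}$ and $\|\nabla^2 d(t)\|_{L^6(\mathbb R^3_+)}\leq\tfrac32 C_2 t^{-\frac94}$ on $(0,T^{*}]$; by continuity these strictly smaller quantities still satisfy the factor-$2$ inequalities on a slightly larger interval $(0,T^{*}+\eta]$, forcing $T^{*}=\infty$. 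With the profile bounds now valid for all $t>0$, a final application of Lemma \ref{le:4} yields \eqref{u-a-2}, \eqref{d-a-2} and \eqref{nabel-d-a-2}, while \eqref{c4-2} and \eqref{c5-2} are the profile bounds themselves.

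I expect the delicate point to be the interface near $t=0$, where the weights $t^{-\frac74}$ and $t^{-\frac94}$ are singular: one must confirm that the genuine small-time smoothing rate ($\sim t^{-\frac34}$ from $L^3$ data) is milder than these weights, and that the constants $C_1,C_2$, fixed by the large-time decay in Corollaries \ref{cor-1}--\ref{cor-2}, also accommodate the small-time regime. Once Lemma \ref{le:4} is available, everything else is bookkeeping.
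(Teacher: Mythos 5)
Your proposal is correct and follows essentially the same route as the paper: the paper simply states that Corollary \ref{cor-4} follows from Lemma \ref{le:4} ``by the continuity argument,'' and your write-up is a faithful expansion of exactly that argument (nonemptiness near $t=0$ from the $L^3$--$L^6$ smoothing, closedness from time-continuity of the norms, openness from the gap between the factor $2$ hypothesis and the factor $\tfrac32$ conclusion of Lemma \ref{le:4}).
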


Based on the estimate \eqref{u-a-2}, we can deduce

\begin{corollary} Under the same assumptions as Corollary 3.4,
it holds that for any $t>0$ and $r\in (1,6]$,
\begin{eqnarray}
    &&\nonumber \label{nabel u-r-2}
    \|\nabla u(t)\|_{L^r(\mathbb R^3_+)}
    \le Ct^{-\frac12-\frac32(1-\frac{1}{r})}.
\end{eqnarray}
\end{corollary}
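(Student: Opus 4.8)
The plan is to differentiate the second representation in Duhamel's formula \eqref{u-halft} in space and treat the two resulting pieces separately, following the treatment of the endpoint $r=6$ in Lemma \ref{le:4} but keeping careful track of the $L^{q}$-exponents so that the whole range $r\in(1,6]$ is covered at once. Writing
$$\nabla u(t)=\nabla e^{-\frac{t}{2}\mathbb{A}}u(\tfrac{t}2)-\int_{\frac{t}{2}}^t\nabla e^{-(t-s)\mathbb{A}}\mathbb{P}\big(u\cdot\nabla u+\nabla\cdot(\nabla d\odot\nabla d)\big)(s)\,{\rm d}s,$$
I would, as in all the previous corollaries, reduce to $t\ge1$ (the range $0<t<1$ being handled exactly as there by the local smoothing estimates of Subsection \ref{sub3.3}), and I would prove the bound for the single choice of integrability exponent $q_0=r$.

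For the linear term I would apply Lemma \ref{le:1} with $k=1$, $p=r$ and base exponent $q_1=\min\{r,2\}$, so that $q_1\le r$ and Corollaries \ref{cor-1} and \ref{le:3} supply $\|u(\tfrac t2)\|_{L^{q_1}}\lesssim t^{-\frac32(1-\frac1{q_1})}$. The semigroup factor contributes $t^{-\frac12-\frac32(\frac1{q_1}-\frac1r)}$, and the two exponents telescope:
$$-\tfrac12-\tfrac32\big(\tfrac1{q_1}-\tfrac1r\big)-\tfrac32\big(1-\tfrac1{q_1}\big)=-\tfrac12-\tfrac32\big(1-\tfrac1r\big),$$
which is exactly the claimed rate. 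Thus the linear term already realizes the target decay, and the nonlinear term only needs to be no worse.

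For the nonlinear term I would deliberately \emph{not} use the divergence structure $\nabla\cdot(u\otimes u+\nabla d\odot\nabla d)$: placing two derivatives on the semigroup produces a kernel $(t-s)^{-1-\frac32(\frac1{q_0}-\frac1r)}$ that fails to be integrable near $s=t$ once $q_0\le r$. Instead I keep one derivative on the semigroup and estimate the nonlinearity directly in $L^{q_0}=L^r$. By Lemma \ref{le:1} with $k=1,\ p=q=r$ (using boundedness of $\mathbb P$ on $L^r$, $1<r<\infty$) the integrand is $\le C(t-s)^{-\frac12}\|(u\cdot\nabla u+\nabla\cdot(\nabla d\odot\nabla d))(s)\|_{L^r}$, and Hölder's inequality with $\frac1a=\frac1r-\frac16$ gives $\|u\cdot\nabla u\|_{L^r}\le\|u\|_{L^a}\|\nabla u\|_{L^6}$ and $\|\nabla\cdot(\nabla d\odot\nabla d)\|_{L^r}\le C\|\nabla d\|_{L^a}\|\nabla^2 d\|_{L^6}$. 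Feeding in the rates $\|u\|_{L^a}\lesssim s^{-\frac32(1-\frac1a)}$ and $\|\nabla d\|_{L^a}\lesssim s^{-\frac12-\frac32(1-\frac1a)}$ (from Corollaries \ref{le:3} and \ref{cor-4}, valid since $a\in[\tfrac65,\infty]$), together with $\|\nabla u\|_{L^6}\lesssim s^{-\frac74}$ and $\|\nabla^2 d\|_{L^6}\lesssim s^{-\frac94}$ from \eqref{c4-2}--\eqref{c5-2}, yields $\|(u\cdot\nabla u)(s)\|_{L^r}\lesssim s^{-\frac72+\frac3{2r}}$ and an even faster rate for the $d$-term. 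Lemma \ref{key-estimate} then produces
$$\int_{\frac{t}{2}}^t(t-s)^{-\frac12}s^{-(\frac72-\frac3{2r})}\,{\rm d}s\le Ct^{-3+\frac3{2r}}\le Ct^{-\frac12-\frac32(1-\frac1r)}\qquad(t\ge1),$$
so the nonlinear contribution is strictly subdominant and the claim follows.

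The main obstacle is precisely the book-keeping forced by the fact that, at this stage, a decay bound for $\nabla u$ is available only in $L^6$ (equation \eqref{c4-2}): the ``hard'' derivative must always be paired into $L^6$, which fixes the Hölder exponent $\frac1a=\frac1r-\frac16$ and, in turn, forces one to invoke the low-$L^a$ decay of $u$ and $\nabla d$ all the way down to $a=\tfrac65$ as $r\to1^+$. One must also check that this choice keeps $a>1$ and $q_0=r>1$ (so that $\mathbb P$ is bounded and Lemma \ref{le:1} applies) and that the kernel exponent $\tfrac12$ stays below $1$; these are exactly the constraints that confine the argument to $r\in(1,6]$.
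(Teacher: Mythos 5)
Your proposal is correct and follows essentially the same route as the paper: both start from the Duhamel representation \eqref{u-halft}$_1$, bound the linear term via Lemma \ref{le:1} with $k=1$ combined with the already-established $L^q$ decay of $u(\tfrac t2)$, and estimate the nonlinear term by keeping one derivative on the semigroup and applying H\"older with $\frac1a=\frac1r-\frac16$ so that $\nabla u$ and $\nabla^2 d$ are always measured in $L^6$ via \eqref{c4-2}--\eqref{c5-2}. The only (immaterial) differences are your choice $q_1=\min\{r,2\}$ instead of $q_1=r$ for the linear term and your handling of the endpoint $r=6$ inside the unified argument rather than quoting \eqref{c4-2} directly.
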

\begin{proof} In fact, it follows from (\ref{u-halft})$_1$ and Lemma 2.1 that, for any $r\in (1,6)$,
\begin{eqnarray*}
    &&\|\nabla u(t)\|_{L^r(\mathbb R^3_+)}\\
    &&\leq\nonumber
     C(1+t)^{-\frac12}\|u(\frac{t}2)\|_{L^r(\mathbb R^3_+)}
     +C\int_{\frac{t}{2}}^t(t-s)^{-\frac12}\big\||u||\nabla u|+|\nabla d||\nabla^2 d|
     \big\|_{L^{r}(\mathbb R^3_+)}{\rm d}s
     \\&&\leq
     Ct^{-\frac12-\frac32(1-\frac{1}{r})}\\
     &&+C\int_{\frac{t}{2}}^t(t-s)^{-\frac12}\big(\|u(s)\|_{L^{\frac{6r}{6-r}}(\mathbb R^3_+)}\|\nabla u(s)\|_{L^6(\mathbb R^3_+)}+\|\nabla d(s)\|_{L^\frac{6r}{6-r}(\mathbb R^3_+)}\|\nabla^2 d(s)\|_{L^6(\mathbb R^3_+)}\big){\rm d}s
     \\
     &&\leq
     Ct^{-\frac12-\frac32(1-\frac{1}{r})}.
\end{eqnarray*}
This, combined with (\ref{c4-2}), completes the proof. \end{proof}

We also enlarge the range for the estimate of $\nabla^2 d(t)$. More precisely,
\begin{corollary} Under the same assumptions as Corollary 3.4,
it holds that for any $t>0$ and $s\in [1,6]$,
\begin{eqnarray}
    \|\nabla^2d(t)\|_{L^s(\mathbb R^3_+)}
    \leq Ct^{-1-\frac32(1-\frac{1}{s})}.
\end{eqnarray}
\end{corollary}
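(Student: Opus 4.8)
The plan is to prove the two endpoints $s=1$ and $s=6$ and recover the full range $s\in(1,6)$ by interpolation. The endpoint $s=6$ is already in hand: Corollary~\ref{cor-4} gives $\|\nabla^2 d(t)\|_{L^6(\mathbb R^3_+)}\le Ct^{-9/4}$, which is exactly the asserted rate since $-1-\frac32(1-\frac16)=-\frac94$; and at $s=1$ the asserted rate is $t^{-1}$. Hence the corollary follows once I establish
\[
\|\nabla^2 d(t)\|_{L^1(\mathbb R^3_+)}\le Ct^{-1},\qquad t\ge 1,
\]
and then interpolate. Writing $\frac1s=\theta+\frac{1-\theta}{6}$, i.e. $\theta=\frac{6}{5s}-\frac15\in[0,1]$ for $s\in[1,6]$, the bound $\|\nabla^2 d\|_{L^s}\le\|\nabla^2 d\|_{L^1}^{\theta}\|\nabla^2 d\|_{L^6}^{1-\theta}$ produces the decay exponent $\theta(-1)+(1-\theta)(-\frac94)=\frac54\theta-\frac94=\frac{3}{2s}-\frac52=-1-\frac32(1-\frac1s)$, precisely the target. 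The restriction $t\ge1$ costs nothing, as for $0<t\le1$ the claimed (blowing-up) bound follows from the higher-order a~priori estimates of Lemma~\ref{le:4} and its predecessors.

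For the $L^1$ bound I would reuse the half-time Duhamel representation already derived in the proof of Lemma~\ref{le:4},
\[
\nabla^2 d(t)=\nabla e^{\frac t2\Delta}\nabla d(\tfrac t2)+\int_{\frac t2}^t\nabla e^{(t-s)\Delta}\big(-\nabla(u\cdot\nabla d)+\nabla(|\nabla d|^2 d)\big)(s)\,{\rm d}s.
\]
For the linear piece, the $L^1$ gradient heat estimate \eqref{p-q-2} of Lemma~\ref{le:1} together with $\|\nabla d(\frac t2)\|_{L^1(\mathbb R^3_+)}\le Ct^{-1/2}$ (Corollary~\ref{le:3} at $s=1$) gives $\|\nabla e^{\frac t2\Delta}\nabla d(\frac t2)\|_{L^1}\le Ct^{-1/2}\cdot t^{-1/2}=Ct^{-1}$, which is the dominant contribution. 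For the nonlinear integral I apply \eqref{p-q-2} under the integral sign, reducing to $\int_{t/2}^t(t-s)^{-1/2}\|F(s)\|_{L^1}\,{\rm d}s$ where $F$ collects the schematic terms $|\nabla u||\nabla d|$, $|u||\nabla^2 d|$, $|\nabla d||\nabla^2 d|$ and $|\nabla d|^3$. Each is controlled by Hölder and the rates proved above: $\||\nabla u||\nabla d|\|_{L^1}\le\|\nabla u\|_{L^2}\|\nabla d\|_{L^2}\le Ct^{-5/2}$ and $\||u||\nabla^2 d|\|_{L^1}\le\|u\|_{L^{6/5}}\|\nabla^2 d\|_{L^6}\le Ct^{-5/2}$, while $\||\nabla d||\nabla^2 d|\|_{L^1}\le\|\nabla d\|_{L^{6/5}}\|\nabla^2 d\|_{L^6}\le Ct^{-3}$ and $\||\nabla d|^3\|_{L^1}=\|\nabla d\|_{L^3}^3\le Ct^{-9/2}$ decay faster. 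Thus $\|F(s)\|_{L^1}\le Cs^{-5/2}$, and Lemma~\ref{key-estimate} yields $\int_{t/2}^t(t-s)^{-1/2}s^{-5/2}\,{\rm d}s\le Ct^{-2}$, so the nonlinear contribution is $O(t^{-2})$ and is absorbed into $Ct^{-1}$.

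The main difficulty here is not conceptual but a matter of bookkeeping: the Hölder exponents must be chosen so that every factor is covered by an \emph{already-established} decay rate. In particular $\||u||\nabla^2 d|\|_{L^1}$ forces the subcritical bound $\|u\|_{L^{6/5}}\le Ct^{-1/4}$ from Corollary~\ref{le:3} (valid only for $r\in(1,2]$) paired with the $L^6$-bound on $\nabla^2 d$ from Corollary~\ref{cor-4}, and $\||\nabla u||\nabla d|\|_{L^1}$ needs the decay $\|\nabla u\|_{L^2}\le Ct^{-5/4}$ from the preceding corollary. Once these are in place, every nonlinear term decays at least like $t^{-5/2}$, comfortably faster than the $t^{-1}$ from the linear semigroup term, so the estimate closes directly, with no continuity argument required; interpolation with the $L^6$ rate of Corollary~\ref{cor-4} then delivers the claim for all $s\in[1,6]$.
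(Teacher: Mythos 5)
Your proof is correct and follows essentially the same route as the paper: the half-time Duhamel formula for $\nabla^2 d$, the $L^1$--$L^1$ gradient heat estimate applied to $\nabla d(\tfrac t2)$ (which carries the dominant $t^{-1}$ contribution), H\"older on the nonlinear terms, and interpolation with the $L^6$ rate of Corollary 3.4. The only difference is bookkeeping: you pair $u$ and $\nabla d$ in $L^{6/5}$ against $\nabla^2 d$ in $L^6$, whereas the paper uses $L^2$--$L^2$ pairings (e.g.\ $\|u\|_{L^2}\|\nabla^2 d\|_{L^2}$); your choice has the slight advantage of invoking only decay rates that are explicitly established before this point.
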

\begin{proof} From (\ref{nabel2-d}), Lemma \ref{le:1}, (\ref{nabel d-L1}), and Corollary \ref{cor-4}, we obtain that
\begin{eqnarray*}
    &&\|\nabla^2 d(t)\|_{L^1(\mathbb R^3_+)}
    \\&\leq&\nonumber
    C_{1,1,1,3}t^{-\frac12}\|\nabla d(\frac{t}2)\|_{L^1(\mathbb R^3_+)}\\
    &+&\int_{\frac{t}{2}}^t(t-s)^{-\frac12}\big\||\nabla(u\cdot\nabla d)|+|\nabla(|\nabla d|^2d)|\big\|_{L^1(\mathbb R^3_+)}{\rm d}s
    \\
    &\leq&\nonumber
    Ct^{-1}+\int_{\frac{t}{2}}^t(t-s)^{-\frac12}\big(\|\nabla u(s)\|_{L^2(\mathbb R^3_+)}\|\nabla d(s)\|_{L^2(\mathbb R^3_+)}
    +\|u(s)\|_{L^2(\mathbb R^3_+)}\|\nabla^2 d(s)\|_{L^2(\mathbb R^3_+)}
    \\
    &&\nonumber
    \qquad\qquad+\|\nabla d(s)\|_{L^2(\mathbb R^3_+)}
    \|\nabla^2 d(s)\|_{L^2(\mathbb R^3_+)}
    +\|\nabla d(s)\|_{L^3(\mathbb R^3_+)}^3\big)(s){\rm d}s
    \\&\leq&
    Ct^{-1}.
\end{eqnarray*}
The conclusion now follows from (\ref{c5-2}) and the interpolation inequality.
\end{proof}

Combining all conclusions in this section, we prove the time decay estimates in Theorem \ref{th:E_0}. 

\bigskip

\section{Proof of Theorem 1.2}

In this section, we will give a proof of Theorem \ref{th:E_1}, which follows from Lemma 4.1 below.

Under the assumptions in Theorem 1.2, by integrating (\ref{nabelu+dt}) over $[0,t]$, 
and applying (\ref{le:B}), (\ref{d-t}), (\ref{nabla.2d}) and the elliptic estimates, we have
that for any $t>0$,
\begin{eqnarray}
   &&\nonumber
   \label{4.1}
   \big(\|\nabla u(t)\|_{L^2(\mathbb R^3_+)}^2+\|\nabla^2 d(t)\|_{L^2(\mathbb R^3_+)}^2+\|d_t(t)\|_{L^2(\mathbb R^3_+)}^2\big)
   \\&&
   +\int_0^t\big(\|u_t\|_{L^2(\mathbb R^3_+)}^2+\|\nabla u\|_{H^1(\mathbb R^3_+)}^2+\|d_t\|_{H^1(\mathbb R^3_+)}^2+\|\nabla^2 d\|_{H^1(\mathbb R^3_+)}^2\big){\rm d}s\leq C.
\end{eqnarray}

\begin{lemma} Under the same assumptions as in Theorem 1.2, it holds that for any $t>0$,
\begin{eqnarray}
     \|\nabla u(t)\|_{L^1(\mathbb R^3_+)}\leq Ct^{-\frac12}.
\end{eqnarray}
\end{lemma}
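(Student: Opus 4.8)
The plan is to start from Duhamel's formula (\ref{u})$_1$ on the whole interval $[0,t]$, apply $\nabla$, and take the $L^1(\mathbb R^3_+)$-norm, treating the linear and nonlinear parts separately; throughout I may assume $t\ge 1$, the range $0<t<1$ being covered by the uniform estimate (\ref{4.1}) together with $t^{-\frac12}\gtrsim 1$. For the linear piece $\nabla e^{-t\mathbb{A}}u_0$, I would invoke the endpoint bound (\ref{p-q-2}) of Lemma \ref{le:1}, which gives directly $\|\nabla e^{-t\mathbb{A}}u_0\|_{L^1(\mathbb R^3_+)}\le Ct^{-\frac12}\|u_0\|_{L^1(\mathbb R^3_+)}$; this is exactly the claimed rate and only uses $u_0\in L^1(\mathbb R^3_+)$. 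The subtlety is that $\mathbb{P}$ is unbounded on $L^1$, so for the nonlinear integrand I would first rewrite $\mathbb{P}(u\cdot\nabla u+\nabla\cdot(\nabla d\odot\nabla d))$ via the decomposition (\ref{decomposition}) and then apply (\ref{p-q-2}) with $a=\mathbb{P}(u\cdot\nabla u+\nabla\cdot(\nabla d\odot\nabla d))(s)\in L^1_\sigma(\mathbb R^3_+)$, whose $L^1$-norm is controlled by (\ref{P}). This reduces the whole matter to showing
\begin{equation}
\int_0^t (t-s)^{-\frac12}\Big(\|u(s)\|_{H^1(\mathbb R^3_+)}^2+\|\nabla d(s)\|_{H^1(\mathbb R^3_+)}^2+\big\||\nabla d(s)||\nabla^3 d(s)|\big\|_{L^1(\mathbb R^3_+)}\Big)\,{\rm d}s\le Ct^{-\frac12}.
\end{equation}

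I would split this integral as $\int_0^{t/2}+\int_{t/2}^t$. On $[0,t/2]$ one has $(t-s)^{-\frac12}\le Ct^{-\frac12}$, so it suffices that $F(s):=\|u(s)\|_{H^1}^2+\|\nabla d(s)\|_{H^1}^2+\||\nabla d(s)||\nabla^3 d(s)|\|_{L^1}$ be integrable on $[0,\infty)$. This is exactly where the extra hypotheses of Theorem \ref{th:E_1} enter: they upgrade the $\sigma(t)$-weighted estimate (\ref{3190}) to the un-weighted bound (\ref{4.1}), valid down to $s=0$, which yields $\int_0^\infty(\|\nabla u\|_{L^2}^2+\|\nabla^2 d\|_{H^1}^2)\,{\rm d}s\le C$. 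Combined with the decay $\|u(s)\|_{L^2}^2\lesssim(1+s)^{-3/2}$ and $\|\nabla d(s)\|_{L^2}^2\lesssim(1+s)^{-5/2}$ (Theorem \ref{th:E_0}, Corollaries \ref{cor-1}--\ref{cor-2}), and $\int_0^\infty\||\nabla d||\nabla^3 d|\|_{L^1}\le(\int_0^\infty\|\nabla d\|_{L^2}^2)^{1/2}(\int_0^\infty\|\nabla^3 d\|_{L^2}^2)^{1/2}<\infty$ by Cauchy--Schwarz, this gives $\int_0^\infty F\,{\rm d}s\le C$. On $[t/2,t]$ the first two contributions are harmless: since $\|u(s)\|_{H^1}^2\lesssim t^{-3/2}$ and $\|\nabla d(s)\|_{H^1}^2\lesssim t^{-5/2}$ for $s\ge t/2$ by the pointwise rates of Theorem \ref{th:E_0}, pulling out the supremum and using $\int_{t/2}^t(t-s)^{-\frac12}\,{\rm d}s\lesssim t^{1/2}$ bounds them by $Ct^{-1}$ and $Ct^{-2}$ respectively.

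The main obstacle is the third-derivative term $\int_{t/2}^t (t-s)^{-\frac12}\||\nabla d(s)||\nabla^3 d(s)|\|_{L^1}\,{\rm d}s$: here $\|\nabla^3 d\|_{L^2}$ is only controlled in $L^2_t$ through (\ref{4.1}), while the weight $(t-s)^{-\frac12}$ is not square-integrable up to $s=t$, so a plain Cauchy--Schwarz in time diverges. To get around this I would bound $\||\nabla d||\nabla^3 d|\|_{L^1}\le\|\nabla d\|_{L^2}\|\nabla^3 d\|_{L^2}$, use the interpolation $\|\nabla^3 d\|_{L^2}\le C\|\nabla^2 d\|_{L^2}^{1/2}\|\nabla^4 d\|_{L^2}^{1/2}$ (justified on $\mathbb R^3_+$ by even extension across $\partial\mathbb R^3_+$, as in the footnotes of Section 3), and pull out the pointwise factors $\|\nabla d(s)\|_{L^2}\lesssim t^{-5/4}$ and $\|\nabla^2 d(s)\|_{L^2}^{1/2}\lesssim t^{-7/8}$ for $s\in[t/2,t]$ (Theorem \ref{th:E_0} at $s=2$, $q=2$). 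It then remains to estimate $\int_{t/2}^t(t-s)^{-\frac12}\|\nabla^4 d(s)\|_{L^2}^{1/2}\,{\rm d}s$, which I would treat by H\"older with exponents $(\frac43,4)$: the weight gives $\big(\int_{t/2}^t(t-s)^{-2/3}{\rm d}s\big)^{3/4}\lesssim t^{1/4}$, while $\big(\int_{t/2}^t\|\nabla^4 d\|_{L^2}^2{\rm d}s\big)^{1/4}\le C$ is finite and uniform in $t$, since Lemma \ref{le:higher order} with $\tau=\tfrac12$ yields $\int_{1/2}^\infty\|\nabla^4 d\|_{L^2}^2\,{\rm d}s\le C$ and $[t/2,t]\subset[\tfrac12,\infty)$. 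Altogether this term is $\lesssim t^{-5/4-7/8+1/4}=t^{-15/8}\le Ct^{-\frac12}$. Collecting the linear piece, the $[0,t/2]$ part, and the three $[t/2,t]$ pieces then gives $\|\nabla u(t)\|_{L^1(\mathbb R^3_+)}\le Ct^{-\frac12}$, proving the lemma.
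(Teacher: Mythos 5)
Your proposal is correct and follows essentially the same route as the paper's proof: Duhamel's formula on $[0,t]$, the $L^1$-decomposition of $\mathbb{P}$ via (\ref{decomposition})--(\ref{P}), the endpoint gradient bound (\ref{p-q-2}), a split at $t/2$ with Cauchy--Schwarz in time on $[0,t/2]$ using the unweighted estimate (\ref{4.1}), and the pointwise decay rates on $[t/2,t]$. The only divergence is the term $\int_{t/2}^t(t-s)^{-\frac12}\|\nabla d(s)\|_{L^2}\|\nabla^3 d(s)\|_{L^2}\,{\rm d}s$, where the paper simply pulls out $\sup_{s\in[t/2,t]}\|\nabla^3 d(s)\|_{L^2}\le C$ (available from Lemma \ref{le:higher order} with $\tau=\frac12$) and integrates $(t-s)^{-\frac12}\|\nabla d(s)\|_{L^2}\lesssim (t-s)^{-\frac12}t^{-\frac54}$, whereas you interpolate $\|\nabla^3 d\|_{L^2}\le C\|\nabla^2 d\|_{L^2}^{\frac12}\|\nabla^4 d\|_{L^2}^{\frac12}$ and use H\"older in time against the $L^2_t$-bound on $\|\nabla^4 d\|_{L^2}$ --- valid, but more elaborate than the paper's one-line bound.
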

\begin{proof} Without loss of generality, we may assume that $t\geq1$. By (\ref{u})$_1$, (\ref{decomposition})--(\ref{P}), the H\"older inequality, (\ref{4.1}) and Lemma \ref{le:higher order}, we obtain
\begin{eqnarray}
    \nonumber
    &&\|\nabla u\|_{L^1(\mathbb R^3_+)}
    \\&\leq&\nonumber
     Ct^{-\frac12}\|u_0\|_{L^1(\mathbb R^3_+)}+\int_0^t\|\nabla e^{-(t-s)\mathbb{A}}\mathbb{P}\left(u\cdot\nabla u+\nabla\cdot(\nabla d\odot\nabla d)\right)(s)\|_{L^1(\mathbb R^3_+)}{\rm d}s
    \\&\leq&\nonumber
    Ct^{-\frac12}\|u_0\|_{L^1(\mathbb R^3_+)}
    \\&& \nonumber
    +C\int_0^t(t-s)^{-\frac12}\big(\|u\|_{H^1(\mathbb R^3_+)}^2+\|\nabla d\|_{H^1(\mathbb R^3_+)}^2\nonumber\\
    &&\qquad\qquad\qquad\qquad+\||\nabla d||\nabla^2 d|\|_{L^1(\mathbb R^3_+)}+\||\nabla d||\nabla^3d|\|_{L^1(\mathbb R^3_+)}\big)(s){\rm d}s\nonumber
    \\&\leq&\nonumber
    Ct^{-\frac12}+C\int_0^t(t-s)^{-\frac12}\big(\|u(s)\|_{H^1(\mathbb R^3_+)}^2+\|\nabla d(s)\|_{H^1(\mathbb R^3_+)}^2\nonumber\\
    &&\qquad\qquad\qquad+\|\nabla d(s)\|_{L^2(\mathbb R^3_+)}\|\nabla^2 d(s)\|_{L^2(\mathbb R^3_+)}
    +\|\nabla d(s)\|_{L^2(\mathbb R^3_+)}\|\nabla^3d(s)\|_{L^{2}(\mathbb R^3_+)}\big){\rm d}s\nonumber
    \\&\leq&\nonumber
    Ct^{-\frac12}+C\int_0^t(t-s)^{-\frac12}(1+s)^{-\frac54}{\rm d}s\nonumber\\
    &&\qquad\qquad\qquad+C\big(\int_0^{\frac{t}{2}}
    +\int_{\frac{t}{2}}^{t}\big)(t-s)^{-\frac12}\|\nabla d(s)\|_{L^2(\mathbb R^3_+)}\|\nabla^3d(s)\|_{L^{2}(\mathbb R^3_+)}{\rm d}s
    \nonumber\\
    &\leq&\nonumber
    Ct^{-\frac12}+C\big(\frac{t}{2}\big)^{-\frac12}\big(\int_0^{\frac{t}{2}}\|\nabla d(s)\|_{L^2(\mathbb R^3_+)}^2{\rm d}s\big)^{\frac12}\big(\int_0^{\frac{t}{2}}\|\nabla^3 d(s)\|_{L^2(\mathbb R^3_+)}^2{\rm d}s\big)^{\frac12}
    \\&&\nonumber
    +C\sup\limits_{s\in\left[\frac{t}{2},t\right]}\|\nabla^3 d(s)\|_{L^2(\mathbb R^3_+)}^2\int_{\frac{t}{2}}^{t}(t-s)^{-\frac12}\|\nabla d(s)\|_{L^2(\mathbb R^3_+)}{\rm d}s
    \\&\leq&\nonumber
    Ct^{-\frac12}+Ct^{-\frac12}\big(\int_0^{\frac{t}{2}}(1+s)^{-\frac54}{\rm d}s\big)^{\frac12}
    \\&\leq&
    Ct^{-\frac12}.
\end{eqnarray}
This completes the proof of this Lemma.
\end{proof}

\section{Proof of Corollary 1.1}

In this section, we will prove Corollary 1.1.
In fact, the conclusions of Corollary \ref{th:E_2} follow from Lemma 5.1 and Lemma 5.2 below. 

We first recall an revised estimates with weighted condition (\ref{weight}). For any $p\in (1,+\infty)$, it holds
\begin{eqnarray} \label{new-estimate}
     &&\|\nabla^ke^{-t\mathbb{A}}u_0\|_{L^p(\mathbb{R}^3_+)}
     \leq
     Ct^{-\frac{k+1}{2}-\frac32\left(1-\frac{1}{p}\right)}\int_{\mathbb R^3_+}(1+x_3)|u_0(x)|{\rm d}x,
\end{eqnarray} for $k=0,1$.

\begin{lemma} \label{cor-new-1} Under the same assumptions of Theorem \ref{th:E_2}, if
$(u,d)$ is the global strong solution of \eqref{system} obtained by Theorem \ref{th:E_0}, then, for any $r\in (1,3/2)$, it holds
that
\begin{eqnarray}
     \|u(t)\|_{L^{r}(\mathbb R^3_+)}
     \leq Ct^{-\frac12-\frac32\left(1-\frac{1}{r}\right)}.
\end{eqnarray}
\end{lemma}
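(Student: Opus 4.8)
The plan is to exploit Duhamel's formula (\ref{u})$_1$, splitting $u(t)$ into the linear Stokes evolution $e^{-t\mathbb{A}}u_0$ of the initial data and the nonlinear Duhamel integral, and to estimate each piece separately in $L^r(\mathbb R^3_+)$. The whole improvement over the unweighted rate of Corollary \ref{le:3} is driven by the weighted linear bound (\ref{new-estimate}); the nonlinear integral will turn out to already decay at the faster rate.

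For the linear part I would apply (\ref{new-estimate}) with $k=0$ and $p=r$, which gives
\[
\|e^{-t\mathbb{A}}u_0\|_{L^r(\mathbb R^3_+)}\le Ct^{-\frac12-\frac32\left(1-\frac1r\right)}\int_{\mathbb R^3_+}(1+x_3)|u_0(x)|\,{\rm d}x .
\]
The integral on the right is finite, since $u_0\in L^1(\mathbb R^3_+)$ (an assumption inherited from Theorem \ref{th:E_0}) together with the weight hypothesis (\ref{weight}). This already produces exactly the claimed rate $t^{-\frac12-\frac32(1-\frac1r)}$.

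For the nonlinear term I would reprise the duality argument of (\ref{3.49}): using $\nabla\cdot u=0$ to write $u\cdot\nabla u+\nabla\cdot(\nabla d\odot\nabla d)=\nabla\cdot(u\otimes u-\nabla d\odot\nabla d)$, pairing against a divergence-free test field $\varphi$ with $\|\varphi\|_{L^{r/(r-1)}}\le1$, and transferring the derivative onto $\nabla e^{-(t-s)\mathbb{A}}\varphi$. The $L^{r/(r-1)}$--$L^\infty$ gradient bound of Lemma \ref{le:1} (with $p=\infty$, $q=r/(r-1)$, so the exponent is $-\frac12-\frac32(1-\frac1r)$) then yields
\[
\Big\|\int_0^t e^{-(t-s)\mathbb{A}}\mathbb{P}\big(u\cdot\nabla u+\nabla\cdot(\nabla d\odot\nabla d)\big)(s)\,{\rm d}s\Big\|_{L^r(\mathbb R^3_+)}\le C\int_0^t(t-s)^{-\frac12-\frac32(1-\frac1r)}\big(\|u(s)\|_{L^2}^2+\|\nabla d(s)\|_{L^2}^2\big)\,{\rm d}s .
\]
Inserting the $L^2$-decay rates of Corollary \ref{cor-1} and Corollary \ref{cor-2}, namely $\|u(s)\|_{L^2}^2\le C(1+s)^{-\frac32}$ and $\|\nabla d(s)\|_{L^2}^2\le C(1+s)^{-\frac52}$, the slower factor $(1+s)^{-\frac32}$ dominates, and Lemma \ref{key-estimate} with $\alpha_1=\frac12+\frac32(1-\frac1r)$ and $\alpha_3=\frac32$ bounds the integral by $Ct^{-\alpha_1}=Ct^{-\frac12-\frac32(1-\frac1r)}$. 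Adding the two contributions gives the lemma.

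The one point that needs care, and the reason the statement is restricted to $r\in(1,\frac32)$, is the integrability of the temporal singularity $(t-s)^{-\alpha_1}$ near $s=t$: since $\alpha_1=2-\frac{3}{2r}$, the condition $\alpha_1<1$ is exactly $r<\frac32$, which is what lets the first branch of Lemma \ref{key-estimate} (valid for $\alpha_1\in(0,1)$ and $\alpha_3>1$) apply without a logarithmic loss. Beyond checking this threshold, I do not expect any structural obstacle, as the argument is a weighted refinement of estimates already in hand.
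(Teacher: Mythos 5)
Your proposal is correct and follows essentially the same route as the paper: Duhamel's formula, the weighted linear estimate \eqref{new-estimate} with $k=0$, the duality argument of \eqref{3.49} for the nonlinear integral, the $L^2$-decay rates, and Lemma \ref{key-estimate}, with the restriction $r<\frac32$ arising exactly as you identify from the integrability of $(t-s)^{-\alpha_1}$. Your explicit remark on why $\alpha_1=2-\frac{3}{2r}<1$ forces $r<\frac32$ is a useful detail the paper leaves implicit.
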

\begin{proof} By using (\ref{u-halft}), (\ref{3.49}), (\ref{new-estimate}) and Theorem \ref{th:E_0}, we have,
\begin{eqnarray}
    \nonumber
    \|u(t)\|_{L^{r}(\mathbb R^3_+)}&\leq& Ct^{-\frac12-\frac32\left(1-\frac{1}{r}\right)}\int_{\mathbb R^3_+}(1+x_3)|u_0(x)|{\rm d}x
    \\&&\nonumber
    +\int_0^t(t-s)^{-\frac12-\frac32(1-\frac{1}{r})}
     \big(\|u(s)\|_{L^2(\mathbb R^3_+)}^2+\|\nabla d(s)\|_{L^2(\mathbb R^3_+)}^2\big){\rm d}s
     \\&\leq&\nonumber
     Ct^{-\frac12-\frac32\left(1-\frac{1}{r}\right)}+\int_0^t(t-s)^{-\frac12-\frac32(1-\frac{1}{r})}
     (1+s)^{-\frac32}{\rm d}s
     \\&\leq&
     Ct^{-\frac12-\frac32\left(1-\frac{1}{r}\right)}.
\end{eqnarray} This completes the proof.
\end{proof}

\begin{lemma} \label{cor-new-2} Under the same assumptions of Theorem \ref{th:E_2}, if
$(u,d)$ is the global strong solution of \eqref{system}
obtained by Theorem \ref{th:E_0}, then, for any $r\in [\frac32,\infty]$ and $p\in (1,6]$, it holds
that
\begin{eqnarray}
     \label{improve-u}
     &&\|u(t)\|_{L^{r}(\mathbb R^3_+)}
     \leq Ct^{-\frac12-\frac32\left(1-\frac{1}{r}\right)},\\
     \label{improve-u-2}
     &&\|\nabla u(t)\|_{L^{p}(\mathbb R^3_+)}
     \leq Ct^{-1-\frac32\left(1-\frac{1}{p}\right)}.
\end{eqnarray}
\end{lemma}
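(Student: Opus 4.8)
The plan is to estimate $u$ and $\nabla u$ through the half-time Duhamel formula \eqref{u-halft},
$$u(t)=e^{-\frac{t}{2}\mathbb{A}}u(\tfrac{t}{2})-\int_{\frac{t}{2}}^t e^{-(t-s)\mathbb{A}}\mathbb{P}\big(u\cdot\nabla u+\nabla\cdot(\nabla d\odot\nabla d)\big)(s)\,{\rm d}s,$$
treating the linear term $e^{-\frac{t}{2}\mathbb{A}}u(\tfrac{t}{2})$ and the Duhamel integral separately; as usual we may assume $t\ge 1$. The point is that the extra half power of decay now available for $u$ at small exponents, coming from the weighted bound of Lemma \ref{cor-new-1}, is exactly enough to upgrade every previous estimate by the factor $t^{-\frac12}$.

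For the linear term I would fix an auxiliary exponent $\tilde r\in(1,\frac32)$ and apply the smoothing estimate of Lemma \ref{le:1} from $L^{\tilde r}$ to $L^r$ with $k=0$ (for \eqref{improve-u}) or $k=1$ (for \eqref{improve-u-2}), and then feed in $\|u(\tfrac t2)\|_{L^{\tilde r}(\mathbb R^3_+)}\le Ct^{-\frac12-\frac32(1-\frac1{\tilde r})}$ from Lemma \ref{cor-new-1}. A short computation shows the two time exponents combine so that $\tilde r$ cancels, leaving precisely $t^{-\frac12-\frac32(1-\frac1r)}$ for $u$ and $t^{-1-\frac32(1-\frac1p)}$ for $\nabla u$. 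Hence the linear term already realizes the claimed rates.

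For the Duhamel integral I would put exactly one derivative on the semigroup: for \eqref{improve-u} integrate by parts to write the kernel as $\nabla e^{-(t-s)\mathbb{A}}\mathbb{P}\big(u\otimes u+\nabla d\odot\nabla d\big)$, and for \eqref{improve-u-2} apply the extra gradient directly to $e^{-(t-s)\mathbb{A}}\mathbb{P}\big(u\cdot\nabla u+\nabla\cdot(\nabla d\odot\nabla d)\big)$. In both cases Lemma \ref{le:1} produces a factor $(t-s)^{-\frac12-\frac32(\frac1q-\frac1r)}$ (resp. with $p$), while the quadratic factor is bounded by the \emph{already established, unweighted} rates of Theorem \ref{th:E_0}, e.g. $\|u\otimes u\|_{L^q}\le\|u\|_{L^\infty(\mathbb R^3_+)}\|u\|_{L^q(\mathbb R^3_+)}$ and $\|u\cdot\nabla u\|_{L^q}\le\|u\|_{L^\infty(\mathbb R^3_+)}\|\nabla u\|_{L^q(\mathbb R^3_+)}$, with analogous bounds for the $d$-terms. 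Inserting the corresponding powers of $s$ and applying the $[\frac t2,t]$ part of Lemma \ref{key-estimate} yields decay $t^{-\frac52+\frac3{2r}}$ for the $u$-integral and $t^{-3+\frac3{2p}}$ for the $\nabla u$-integral, both strictly faster than the target, so the integral is absorbed into the linear term.

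The main obstacle is the exponent bookkeeping in the Duhamel integral. One must choose $q$ so that (i) the time singularity is integrable near $s=t$, i.e.\ $\frac12+\frac32(\frac1q-\frac1r)<1$, forcing $\frac1q-\frac1r<\frac13$, and (ii) every factor in the quadratic estimate falls in the admissible range of Theorem \ref{th:E_0}, in particular $\nabla u$ and $\nabla^2 d$ must be taken in $L^q$ with $q\le 6$. Choosing $q=r$ works for all finite $r$ and $q=p$ for all $p\in(1,6]$, while the endpoint $r=\infty$ requires a large but finite $q>3$; these constraints are most delicate as $r\to\infty$ or $p\to 6$, and it is exactly requirement (ii) that pins the range in \eqref{improve-u-2} to $p\in(1,6]$.
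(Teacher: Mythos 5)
Your proposal is correct and follows the same overall architecture as the paper: split \eqref{u-halft}$_1$ into the linear part and the Duhamel integral over $[\frac t2,t]$, let the linear part carry the claimed rate by combining the $L^{\tilde r}\to L^r$ smoothing of Lemma \ref{le:1} with the weighted bound $\|u(\frac t2)\|_{L^{\tilde r}}\le Ct^{-\frac12-\frac32(1-\frac1{\tilde r})}$ of Lemma \ref{cor-new-1} (the $\tilde r$-dependence cancels exactly as you say), and show the nonlinear integral decays strictly faster using the already established unweighted rates of Theorem \ref{th:E_0} together with Lemma \ref{key-estimate}. The one place you genuinely diverge is the Duhamel term of \eqref{improve-u}: you move a derivative onto the semigroup and estimate $\|u\otimes u+\nabla d\odot\nabla d\|_{L^q}$ via $L^\infty\times L^q$ H\"older, whereas the paper keeps the nonlinearity in the form $u\cdot\nabla u+\nabla\cdot(\nabla d\odot\nabla d)$, applies the $L^{\frac32}\to L^r$ (resp.\ $L^3\to L^\infty$) smoothing with no derivative on the semigroup, and bounds the source by $\|u\|_{L^2}\|\nabla u\|_{L^6}+\|\nabla d\|_{L^2}\|\nabla^2 d\|_{L^6}\lesssim s^{-\frac52}$. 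Both give a surplus of decay; the paper's route avoids one technical point that you gloss over, namely that on $\mathbb R^3_+$ the operators $\nabla$ and $e^{-\tau\mathbb A}\mathbb P$ do not commute, so your ``integration by parts'' must be executed by the duality argument of \eqref{3.49} (pairing against $\varphi\in C^\infty_{0,\sigma}$ and putting $\nabla$ on $e^{-(t-s)\mathbb A}\varphi$); the resulting exponents are the ones you quote, so nothing breaks, but this step should be stated as a duality estimate rather than a pointwise identity. For \eqref{improve-u-2} your choice $q=p$ gives the borderline-integrable singularity $(t-s)^{-\frac12}$ and is in fact slightly cleaner than the paper's intermediate exponent $\tilde p\in(\frac{3p}{3+p},p)$; your observation that the availability of $\|\nabla u\|_{L^q}$ and $\|\nabla^2 d\|_{L^q}$ only for $q\le 6$ is what pins the range $p\in(1,6]$ is consistent with the paper, which for that same reason places $\nabla u$ and $\nabla^2 d$ in $L^6$ and the companion factor in $L^{\frac{6\tilde p}{6-\tilde p}}$.
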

\begin{proof}
Without loss of generality, assume that $t\geq 1$. On one hand, for $k=0,1$, from Lemma \ref{le:1} and Lemma \ref{cor-new-1}, it holds, for any $r\in [\frac32,\infty]$,
\begin{eqnarray}
     \nonumber \label{improve-linear-u}
     \|\nabla^ke^{-\frac{t}{2}\mathbb{A}}u(\frac{t}2)\|_{L^r(\mathbb{R}^3_+)}
     &\leq&\nonumber
     Ct^{-\frac{k}{2}-\frac32\left(\frac{1}{\tilde{r}}-\frac{1}{r}\right)}\|u(\frac{t}2)\|_{L^{\tilde{r}}(\mathbb R^3_+)}
     \\&\leq&\nonumber
      Ct^{-\frac{k}{2}-\frac32\left(\frac{1}{\tilde{r}}-\frac{1}{r}\right)}t^{-\frac12-\frac32\left(1-\frac{1}{\tilde{r}}\right)}
      \\&\leq&
      Ct^{-\frac{k+1}{2}-\frac32\left(1-\frac{1}{r}\right)}.
\end{eqnarray} Here we choose some $\tilde{r}\in (1,\frac32)$. 

On the other hand, by using Lemma \ref{le:1}, Lemma \ref{key-estimate} and Theorem \ref{th:E_0}, 
we have, for any $r\in [\frac32,\infty)$, it holds that,
\begin{eqnarray}
     \label{improve-nonlinear-1}
     \nonumber&&\big\|\int_{\frac{t}{2}}^te^{-(t-s)\mathbb{A}}\mathbb{P}\big(u\cdot\nabla u-\nabla\cdot(\nabla d\odot\nabla d)\big)(s){\rm d}s\big\|_{L^r(\mathbb R^3_+)}
     \\&\leq&\nonumber
     \int_{\frac{t}{2}}^t(t-s)^{-\frac32(\frac{2}{3}-\frac{1}{r})}\big(\|u(s)\cdot\nabla u(s)\|_{L^{\frac32}(\mathbb R^3_+)}+\|\nabla\cdot(\nabla d\odot\nabla d)(s)\|_{L^{\frac32}(\mathbb R^3_+)}\big){\rm d}s
     \\&\leq&\nonumber
     \int_{\frac{t}{2}}^t(t-s)^{-\frac32(\frac{2}{3}-\frac{1}{r})}\big(\|u(s)\|_{L^{2}(\mathbb R^3_+)}\|\nabla u(s)\|_{L^6(\mathbb R^3_+)}+\|\nabla d(s)\|_{L^{2}(\mathbb R^3_+)}\|\nabla^2 d(s)\|_{L^6(\mathbb R^3_+)}\big){\rm d}s
     \\&\leq&
     C\int_{\frac{t}{2}}^t(t-s)^{-\frac32\left(\frac{2}{3}-\frac{1}{r}\right)}s^{-\frac34-\frac74}{\rm d}s
     \leq
     Ct^{-\frac12-\frac32\left(1-\frac{1}{r}\right)},
\end{eqnarray} and
\begin{eqnarray}
     \label{improve-nonlinear-2}
     \nonumber&&\big\|\int_{\frac{t}{2}}^te^{-(t-s)\mathbb{A}}\mathbb{P}\left(u\cdot\nabla u-\nabla\cdot(\nabla d\odot\nabla d)\right)(s){\rm d}s\big\|_{L^\infty(\mathbb R^3_+)}
     \\&\leq&\nonumber
     \int_{\frac{t}{2}}^t(t-s)^{-\frac12}\big(\|u(s)\cdot\nabla u(s)\|_{L^{3}(\mathbb R^3_+)}
     +\|\nabla\cdot(\nabla d\odot\nabla d)(s)\|_{L^{3}(\mathbb R^3_+)}\big){\rm d}s
     \\&\leq&\nonumber
     \int_{\frac{t}{2}}^t(t-s)^{-\frac12}\big(\|u(s)\|_{L^{6}(\mathbb R^3_+)}\|\nabla u(s)\|_{L^6(\mathbb R^3_+)}
     +\|\nabla d(s)\|_{L^{6}(\mathbb R^3_+)}\|\nabla^2 d(s)\|_{L^6(\mathbb R^3_+)}\big){\rm d}s
     \\&\leq&
     \int_{\frac{t}{2}}^t(t-s)^{-\frac12}s^{-\frac54-\frac74}{\rm d}s
     \leq
     Ct^{-2},
\end{eqnarray}
Then (\ref{improve-u}) follows from (\ref{u-halft})$_1$, (\ref{improve-linear-u}), (\ref{improve-nonlinear-1}) and (\ref{improve-nonlinear-2}).

Furthermore, for any $p\in(1,6]$ and for some $\tilde{p}$ such that $\frac{3p}{3+p}<\tilde{p}<p$, one obtains
\begin{eqnarray}
     \label{improve-nonlinear-3}
     \nonumber&&\big\|\int_{\frac{t}{2}}^t\nabla e^{-(t-s)\mathbb{A}}\mathbb{P}\left(u\cdot\nabla u-\nabla\cdot(\nabla d\odot\nabla d)\right)(s){\rm d}s\big\|_{L^p(\mathbb R^3_+)}
     \\&\leq&\nonumber
     \int_{\frac{t}{2}}^t(t-s)^{-\frac12-\frac32(\frac{1}{\tilde{p}}-\frac{1}{p})}
     \big(\|u(s)\cdot\nabla u(s)\|_{L^{\tilde{p}}(\mathbb R^3_+)}+\|\nabla\cdot(\nabla d\odot\nabla d)(s)\|_{L^{\tilde{p}}(\mathbb R^3_+)}\big){\rm d}s
     \\&\leq&\nonumber
     \int_{\frac{t}{2}}^t(t-s)^{-\frac12-\frac32(\frac{1}{\tilde{p}}-\frac{1}{p})}
     \big(\|u(s)\|_{L^{\frac{6\tilde{p}}{6-\tilde{p}}}(\mathbb R^3_+)}\|\nabla u(s)\|_{L^6(\mathbb R^3_+)}\nonumber\\
     &&\qquad+\|\nabla d(s)\|_{L^{\frac{6\tilde{p}}{6-\tilde{p}}}(\mathbb R^3_+)}\|\nabla^2 d(s)\|_{L^6(\mathbb R^3_+)}\big){\rm d}s\nonumber
     \\&\leq&
     \int_{\frac{t}{2}}^t(t-s)^{-\frac12-\frac32(\frac{1}{\tilde{p}}-\frac{1}{p})}s^{-\frac12-\frac32\left(1-\frac{6-\tilde{p}}{6\tilde{p}}\right)}s^{-\frac74}{\rm d}s
     \leq
     Ct^{-1-\frac32\left(1-\frac{1}{p}\right)}.
\end{eqnarray}
Then (\ref{improve-u-2}) follows from (\ref{improve-linear-u}) with $k=1$ and (\ref{improve-nonlinear-3}). \end{proof}

\noindent{\bf Acknowledgements}. Huang and Wen would like to thank Professor Pigong Han for helpful discussions.
Huang is partially supported by NSF of China (No.11401439), the Foundation for
Distinguished Young Talents in Higher Education of Guangdong,
China (No. 2014KQNCX162). Wen is partially supported by the
NSF of China (Grant No. 11722104,
11671150), and by GDUPS (2016) and the Fundamental Research Funds
for the Central Universities of China (Grant No. D2172260). Wang is partially supported by
NSF grant 1764417. Parts of this work was conducted when Huang visited South China University
of Technology in Fall 2017, and when Huang and Wen visited
Purdue University in Spring 2018. Huang and Wen would like to thank the departments for their hospitality.

%


\begin{thebibliography}{99}

\bibitem{Bae} H. Bae, H. Choe, {\it Decay rate for the incompressible flows in half spaces}, Math. Z. 238 (2001) 799--816.

\bibitem{Bocher} W. Borchers, T. Miyakawa, {\it $L^2$ decay for the Navier-Stokes flow in half spaces}, Math. Ann. 282 (1988) 139--155.

\bibitem{Bocher2} W. Borchers, T. Miyakawa, {\it Algebraic $L^2$ decay for Navier-Stokes flows in exterior domains, II,} Hiroshima Math. J. 21 (1991) 621--640.

\bibitem{Dai} M. M. Dai, J. Qing, M. Schonbek, {\it Asymptotic behavior of solutions to liquid crystal systems in $\mathbb R^3$}, Comm. Partial Differential Equations 37 (2012) 2138--2164.

\bibitem{Dai-2} M. M. Dai, M. Schonbek, {\it Asymptotic behavior of solutions to the liquid crystals systems in $H^m(\mathbb{R}^3)$},
SIAM J. Math. Anal. 46 (2014) 3131--3150.

\bibitem{Hieber} W. Desch, M. Hieber, J. Pruss, {\it $L^p$-theory of the Stokes equation in a half-space}, J. Evol. Equ. 1 (2001) 115-142.

\bibitem{Ericksen} J. Ericksen, {\it Hydrostatic theory of liquid crystal}, Arch. Ration. Mech. Anal. 9 (1962) 371--378.

\bibitem{Fujigaki} Y. Fujigaki, T. Miyakawa,
{\it Asymptotic profiles of non stationary incompressible Navier-Stokes flows in the half-space}, Methods
Appl. Anal. 8 (2001) 121--158.

\bibitem{Galdi} G. P. Galdi, An Introduction to the Mathematical Theory of the Navier-Stokes Equations, New York, Springer, 2011.

\bibitem{Han1} P. G. Han, {\it Asymptotic behavior for the Stokes flow and Navier-Stokes equations in half spaces}, J. Differential Equations 249
(2010) 1817--1852.

\bibitem{Han2} P. G. Han, {\it Decay results of solutions to the incompressible Navier-Stokes flows in a half space}, J. Differential Equations 250
(2011) 3937--3959.

\bibitem{Han3} P. G. Han, {\it Long-time behavior for the nonstationary Navier-Stokes flows in $L^1(\mathbb R^n_+)$}, J. Funct. Anal. 266 (2014) 1511--1546.


\bibitem{Han-He} P. G. Han, C. He, {\it Decay properties of solutions to the incompressible magnetohydrodynamics equations in a half space}, Math. Meth. Appl. Sci. 35 (2012) 1472--1488.

\bibitem{Hineman} J. Hineman, C.Y. Wang, {\it Well-posedness of nematic liquid crystal flow in $L^3_{u loc}(\mathbb{R}^3)$}, Arch. Ration. Mech. Anal. 210 (2013) 177--218.

\bibitem{Hong-1} M. C. Hong, {\it Global existence of solutions of the simplified Ericksen-Leslie system in dimension two}, Calc. Var. Partial Differential Equations 40 (2011) 15--36.

\bibitem{Hong-2} M. C. Hong, Z. P. Xin, {\it Global existence of solutions of the liquid crystal flow for the Oseen-Frank model in $R^2$}, Adv. Math. 231
(2012) 1364--1400.

\bibitem{Hong-Li-Xin} M. C. Hong, J. K. Li, Z.P. Xin, {\it Blow up criteria of strong solutions to the Ericksen-Leslie system in $\mathbb R^3$}, Comm. Partial Differential Equations 39 (2014) 1284--1328.

\bibitem{huang} J. R. Huang, F. H. Lin, C.Y. Wang, {\it Regularity and existence of global solutions to the Ericksen-Leslie system in $\mathbb R^2$}, Comm. Math. Phys. 331(2) (2014) 805--850.

\bibitem{huang-wang} T. Huang, C.Y. Wang, {\it Blow up criterion for nematic liquid crystal flows}, Commun. Partial Differ. Equ., 37(5) (2012) 875--884.

\bibitem{Kozono} H. Kozono, {\it Global $L^n$ solution and its decay property for the Navier-Stokes equations in half space $\mathbb{R}^n_+$}, J. Differential Equations 79 (1989) 79--88.

\bibitem{Lei} Z. Lei, D. Li, X. Y. Zhang,
{\it Remarks of global wellposedness of liquid crystal flows and heat flows of harmonic maps in two
dimensions}, Proc. Amer. Math. Soc.,  142(11) (2012) 3801--3810.

\bibitem{Lin-0}  F. H. Lin, {\it Nonlinear theory of defects in nematic liquid crystals: phase transition and flow phenomena},
         Comm. Pure Appl. Math. 42 (1989) 789--814.

\bibitem{Lin-1} F. H. Lin, J.Y. Lin, C.Y. Wang, {\it Liquid crystal flows in two dimensions}, Arch. Ration. Mech. Anal. 197 (2010) 297--336.

\bibitem{Lin-Wang16} F. H. Lin, C. Y. Wang, {\it Global existence of weak solutions of the nematic liquid crystal flow in dimension three}. Comm. Pure Appl. Math. 69 (2016), no. 8, 1532--1571.

\bibitem{Lin-2} F. H. Lin, C. Liu, {\it Nonparabolic dissipative systems modeling the flow of liquid crystals}, Comm. Pure Appl. Math. 48 (1995)
501--537.

\bibitem{Lin-3} F. H. Lin, C. Liu, {\it Partial regularities of the nonlinear dissipative systems modeling the flow of liquid crystals}, Discrete Contin.
Dyn. Syst. 2 (1996) 1--22.

\bibitem{Lin-4} F. H. Lin, C. Liu, {\it Existence of solutions for the Ericksen-Leslie system}, Arch. Ration. Mech. Anal. 154 (2000) 135--156.

\bibitem{liu-zhang}  X. Liu, Z. Zhang, {\it Existence of the flow of liquid crystals system},
         Chin. Ann. Math. Ser. A 30 (2009) 1--20.


\bibitem{Kajikiya} R. Kajikiya, T. Miyakawa, {\it On $L^2$ decay of weak solutions of the Navier-Stokes equations in $\mathbb R^n$}, Math. Z. 192 (1986) 136--148.

\bibitem{Kato} T. Kato, {\it Strong $L^p$-solutions of the Navier-Stokes equation in $\mathbb R^m$, with applications to weak
solutions}, Math. Z. 187 (1984), 471--480.


\bibitem{Liu} S. Q. Liu, X. Y. Xu, {\it Global existence and temporal decay for the nematic liquid crystal flows},
J. Math. Anal. Appl. 426 (2015) 228--246.

\bibitem{Ma} W. Y. Ma, H. J. Gong, J. K. Li, {\it Global strong solutions to incompressible Ericksen-Leslie system in $\mathbb R^3$}, Nonlinear Analysis 109 (2014) 230--235.

\bibitem{Miyakawa} T. Miyakawa, {\it Application of Hardy space techniques to the time-decay problem for incompressible Navier-Stokes flows in $\mathbb R^n$}, Funkcialaj Ekvacioj, 41 (1998) 383--434.

\bibitem{Schoenbek} M.E. Schonbek,
{\it Asymptotic behavior of solutions to the three-dimensional Navier-Stokes equations}, Indiana Univ. Math.
J. 41 (1992) 809--823.

\bibitem{Sohr} H. Sohr, The Navier-Stokes equations, Basel, Borkh$\ddot{a}$user, 2001.

\bibitem{Ukai} S. Ukai, {\it A solution formula for the Stokes equation in $\mathbb R^N$}, Comm. Pure Appl. Math. XL (1987) 611--621.

\bibitem{Wang} C. Y. Wang, {\it Well-posedness for the heat flow of harmonic maps and the liquid crystal flow with rough
initial data}, Arch. Ration. Mech. Anal. 200(1) (2011) 1--19.

\bibitem{Wang-Wang} M. Wang, W. D. Wang, {\it Global existence of weak solution for the 2-D Ericksen-Leslie system}, Cal. Var. Partial Differ. Equ., 51 (2014)  915--962.

\bibitem{Wen-Ding} H. Y. Wen, S. J. Ding,
{\it Solutions of incompressible hydrodynamic flow of liquid crystals}, Nonlinear Anal. Real World
Appl. 12 (2011) 1510--1531.

\bibitem{Zhao} C. Zhao, Y. He,
{\it Global $L^n$ strong solutions to magneto-hydrodynamics equations in $\mathbb R^n$},
Dynamics of Continuous, Discrete \& Impulsive Systems. Series A. Mathematical Analysis 14 (2007) 805--835.

%
%
%
%
%
%
%
%
%
%
%
%
%
%
%
%
%
%
%
%
%
%
%
%
%
%
%
%
%
%
%
%
%
%
%
%
%
\end{thebibliography}
\end{document}